\def\squareforqed{\hbox{\rlap{$\sqcap$}$\sqcup$}}
\def\qed{\ifmmode\squareforqed\else{\unskip\nobreak\hfil
\penalty50\hskip1em\null\nobreak\hfil\squareforqed
\parfillskip=0pt\finalhyphendemerits=0\endgraf}\fi\medskip}
\newcommand{\udot}{{}^{\textstyle .}}
\newtheorem{theorem}{Theorem}
\newtheorem{remark}{Remark}
\newtheorem{lemma}{Lemma}
\title{Maximal subgroups of ${}^2E_6(2)$ and its automorphism groups}
\date{Started on 13th March 2016.  This version 25th January 2018}
\author{Robert A. Wilson}
\address{School of Mathematical Sciences\\
Queen Mary University of London\\
London E1 4NS\\U.K.}
\email{R.A.Wilson@qmul.ac.uk}
\begin{document}

\begin{abstract}
We give a new computer-assisted proof of the classification
of maximal subgroups of the simple group ${}^2E_6(2)$ and its extensions
by any subgroup of the outer automorphism group $S_3$.
This is not a new result, but no earlier proof exists in the literature.
A large part of the proof consists of a computational analysis
of subgroups generated by an element of order $2$ and an element of order $3$.
This method can be effectively automated, and via statistical analysis
also provides a sanity check on results that may have been obtained
by delicate theoretical arguments.
\end{abstract}

\maketitle
\tableofcontents
\section{Introduction}
The maximal subgroups of ${}^2E_6(2)$ and its automorphism groups are part of the
folklore, but, as far as I am aware, 
no proof has ever been published. I believe the original result was
obtained by some subset of Peter Kleidman, Simon Norton 
and myself, some time around 1989,
but I cannot be entirely certain of that. In any case,
it seems to be worthwhile to provide a new proof, in the
interests of increasing confidence in the result.

Throughout, let $G$ be the simple group ${}^2E_6(2)$ of order
$2^{36}.3^9.5^2.7^2.11.13.17.19.$
Its automorphism group has shape $G.S_3$, in which the diagonal automorphism
group has order $3$, and the field automorphism has order $2$.
We aim to prove that the maximal subgroups of $G$, $G.2$, $G.3$ and $G.S_3$
are as listed in the Atlas \cite{Atlas}, apart from one or two minor errors in the 
structure of certain subgroups.

In Section~\ref{exist} we prove the existence of the
subgroups listed in the Atlas, with the required corrections,
and deduce the existence of $39$ isomorphism types of proper
non-abelian simple subgroups of $G$.
In Sections~\ref{cenout}, \ref{pne3} and \ref{3local} we classify the
$p$-local subgroups in $G.S_3$ for all relevant primes $p$.
In Section~\ref{ccexist}
we list as many conjugacy classes of simple
subgroups as we can, with justification given in
Sections~\ref{cent7}, \ref{cent5}, \ref{ccexistC3}, \ref{ccexistC2} and \ref{F22sub}. 
This includes a complete classification of
simple subgroups centralized by elements of order $5$ or $7$,
and hence yields  complete classification of non-abelian
characteristically simple subgroups that are not simple.

We begin the non-local analysis in Section~\ref{isolist} by
determining the proper non-abelian simple subgroups up to isomorphism.
We then embark on the main part of the classification, first using
structure constant analysis. Subgroups generated by 
$(2,3,n)$ triples 
for $n=5,7,11,13,17,19$ are classified in
Sections~\ref{A5}, \ref{Hurwitz}, \ref{11triples}, \ref{13triples}, \ref{17triples}, \ref{19triples}
respectively.
These results, summarized in Section~\ref{status}, 
give sufficient information in $23$ of the $39$ cases.
We then move on in Section~\ref{Monster}
to methods using the embedding of $2^2\udot {}^2E_6(2){:}S_3$
in the Monster. Essentially we use Norton's extensive work on
subgroups of the Monster to restrict the possibilities for maximal
subgroups of $G$. This deals with a further $12$ cases.
The final four cases are dealt with in Section~\ref{Baby},
and use detailed knowledge of subgroups of the Baby Monster,
much of it obtained by computational means.
The final Section~\ref{further} includes alternative, computer-free, proofs
for some of the results.

Our notation throughout follows the Atlas \cite{Atlas}.
In particular, $O$ is used for the generically simple groups of orthogonal
type, for example $O_5(3)$ denotes the simple group of order $25920$.

\section{Existence of the known maximal subgroups}
\label{exist}
There are
four conjugacy classes of maximal parabolic subgroups of $G$, all of which
extend to $G.S_3$, as follows:
\begin{itemize}
\item the centralizer $2^{1+20}{:}U_6(2)$ of a $2A$-involution (a $\{3,4\}$-transposition),
extending to $2^{1+20}{:}U_6(2){:}S_3$in $G.S_3$;
\item a four-group normalizer $2^{2+9+18}{:}(L_3(4)\times S_3)$,
extending to a group of shape $2^{2+9+18}{:}(L_3(4){:}S_3\times S_3)$ in $G.S_3$;
\item a $2^3$-normalizer $2^{3+4+12+12}{:}(A_5\times L_3(2))$: note that the structure
of this group is given incorrectly in the Atlas; it extends to
a group of shape $2^{3+4+12+12}{:}(A_5\times L_3(2)\times S_3)$ in $G.S_3$;
\item a group  of shape $2^{8+16}{:}O_8^-(2)$, extending to
$2^{8+16}{:}(O_8^-(2)\times3){:}2$ in $G.S_3$.
\end{itemize}

The following are maximal rank subgroups of $G$ which
can be read off from the Dynkin diagram, and also extend 
to $G.S_3$.
\begin{itemize}
\item $S_3\times U_6(2)$, of type $A_1+{}^2A_5$, extending 
to $S_3\times U_6(2){:}S_3$ in $G.S_3$;
\item $O_{10}^-(2)$, of type ${}^2D_5$, extending to $(O_{10}^-(2)\times 3){:}2$ in $G.S_3$;
\item $L_3(2)\times L_3(4)$, of type $A_1+A_1(q^2)$: 
this acquires an extra automorphism, giving
$(L_3(2)\times L_3(4)){:}2_1$ in $G$, and extending to $(L_3(2)\times L_3(4){:}S_3){:}2$ in $G.S_3$;
by looking at centralizers of outer element of order $14$ in $G.2$, we can see that the $L_3(4){:}S_3$
that centralizes $L_3(2)$ contains automorphisms of type $2_2$, in Atlas notation.
\end{itemize}

There are also the following subgroups of $G$ with their Lie type names:
\begin{itemize}
\item $F_4(2)$, in three conjugacy classes in $G$, extending to $F_4(2)\times 2$ in $G.S_3$;
\item $(3\times O_8^+(2){:}3){:}2$, of type $T_1+D_4$, extending to
$(3^2{:}2\times O_8^+(2)){:}S_3$ in $G.S_3$;
\item ${}^3D_4(2){:}3$, extending to ${}^3D_4(2){:}3\times S_3$ in $G.S_3$;
\item $U_3(8){:}3$, of type ${}^2A_1(q^3)$, extending to $(3\times U_3(8){:}3){:}2$ in $G.S_3$;
\item $3^2{:}Q_8\times U_3(3){:}2$, of type ${}^2A_1+G_2$, extending to $3^2{:}2S_4\times U_3(3){:}2$
in $G.S_3$;
\item $3^{1+6}{:}2^{3+6}{:}3^2{:}2$, of type $3({}^2A_1)$, extending to
$3^{1+6}{:}2^{3+6}{:}3^{1+2}{:}2^2$ in $G.S_3$.
\item $3^5{:}O_5(3){:}2$,  the normalizer of a maximal torus, extending to
$3^6{:}(2\times O_5(3){:}2)$ in $G.S_3$;
\end{itemize}
The normalizers of the
groups ${}^3D_4(2){:}3$
and $3^5{:}O_5(3){:}2$ are not maximal in $G$ or $G.2$,
as they are contained in $F_4(2)$ and $O_7(3)$ respectively.
As we shall see later on, their normalizers are however maximal when the diagonal
automorphism of order $3$ is adjoined.

Finally, as shown by Fischer, $G$ contains
\begin{itemize}
\item $Fi_{22}$, in three conjugacy classes in $G$, extending to $Fi_{22}{:}2$ in $G.S_3$;
\item $O_7(3)$, in three conjugacy classes in $G$, extending to $O_7(3){:}2$ in $G.S_3$.
\end{itemize}
It turns out that the normalizer of $O_7(3)$ is maximal only in $G.2$.

In particular, $G$ contains subgroups isomorphic to $U_6(2)$, $O_{10}^-(2)$,
$F_4(2)$, $U_3(8)$, and $Fi_{22}$.
Using knowledge of the maximal subgroups of these subgroups
\cite{BHRD,F42,KWF22}, we obtain the following 
list of $39$ isomorphism types of known simple subgroups of $G$:
\begin{itemize}
\item $A_5,A_6,A_7,A_8,A_9,A_{10},A_{11},A_{12},$
\item $L_2(7),L_2(8),L_2(11),L_2(13),L_2(16),L_2(17),L_2(25), L_3(3),L_3(4),L_4(3),$
\item $U_3(3),U_3(8),U_4(2),U_4(3),U_5(2),U_6(2),$
\item $O_7(3),O_8^+(2),O_8^-(2),O_{10}^-(2),S_4(4),S_6(2),S_8(2),$
\item ${}^2F_4(2)',{}^3D_4(2),G_2(3),F_4(2),M_{11},M_{12},M_{22},Fi_{22}.$
\end{itemize}
We shall show in Section~\ref{isolist}
below that every nonabelian simple proper subgroup of $G$ is
isomorphic to one of these $39$ groups.

\section{Centralizers of outer automorphisms}
\label{cenout}
The outer automorphism group of $G$ is $S_3$, and a number of maximal subgroups
may be obtained as centralizers of outer automorphisms of $G$, of order $2$ or $3$.
The outer automorphisms of order $2$ are given in the 
Atlas \cite{Atlas}, namely the elements in classes $2D$ and $2E$.
Those of order $3$ are not listed there, but are available in the GAP \cite{GAP}
character table of $G{:}3$. In Atlas notation these are elements in classes
$3D$, $3E$, $3F$, $3G$ and their inverses. Piecing together information from these
various sources, we find the structures of the centralizers in $G$ as follows;
\begin{itemize}
\item $C_G(2D)\cong F_4(2)$;
\item $C_G(2E)\cong [2^{15}]{:}S_6(2) < F_4(2)$;
\item $C_G(3D)\cong O_{10}^-(2)$;
\item $C_G(3E)\cong {}^3D_4(2){:}3$;
\item $C_G(3F)\cong U_5(2)\times S_3 < S_3 \times U_6(2)$;
\item $C_G(3G)\cong U_3(8){:}3$.
\end{itemize}

\section{$p$-local analysis for $p\ne3$}
\label{pne3}
In the simple group $G$, and in the group $G{:}3$ containing the diagonal automorphisms,
the maximal $2$-local subgroups are, by the Borel--Tits theorem, just the maximal parabolic subgroups,
which are well-known. 
Since all outer automorphisms of ${}^2E_6(2)$ are of diagonal or field type, all the
parabolic subgroups are normalized by the full outer automorphism group, and no more
maximal subgroups arise in any extension of $G$ as normalizers of $2$-subgroups of $G$.
In $G{:}2$ the $2$-local subgroups include the centralizers of
outer automorphisms of $G$ of order $2$,
which were considered in Section~\ref{cenout} above.

For the cyclic Sylow $p$-subgroups, that is, for $p=19,17,13$, or $11$, 
we have the following normalizers in $G$:
\begin{itemize}
\item $N(19)\cong 19{:}9<U_3(8){:}3$;
\item $N(17)\cong 17{:}8<O_{10}^-(2)$;
\item $N(13)\cong 13{:}12<{}^3D_4(2){:}3<F_4(2)$;
\item $N(11)\cong S_3\times 11{:}5<S_3\times U_6(2)$.
\end{itemize}
Extending to $G.S_3$ we obtain the following.
\begin{itemize}
\item $N(19)\cong (19{:}9\times 3){:}2 < (3\times U_3(8){:}3){:}2$;
\item $N(17)\cong 17{:}8\times S_3 < (3\times O_{10}^-(2)){:}2$;
\item $N(13)\cong 13{:}12\times S_3 < S_3\times {}^3D_4(2){:}3$;
\item $N(11)\cong S_3\times (3\times 11{:}5){:}2 < S_3\times U_6(2){:}S_3$.
\end{itemize}

We turn next to the Sylow subgroups of order $p^2$,
that is, $p=7$ or $5$. The relevant normalizers in $G$ as follows.
\begin{itemize}
\item $N(7A)\cong (7{:}3\times L_3(4)){:}2< (L_3(2)\times L_3(4)){:}2$.
\item $N(7B)\cong (7{:}3\times L_3(2)){:}2< (L_3(2)\times L_3(4)){:}2$.
\item the subgroup ${}^3D_4(2){:}3$ contains $7^2{:}(3\times 2A_4)$, which is the full
$7^2$-normalizer, since the $7^2$ is self-centralizing, and the stabilizer of any one of the
cyclic $7$-subgroups is $(7{:}3\times 7{:}3){:}2$, and there are two classes,
each of four such
cyclic subgroups.
\item $N(5)\cong (D_{10}\times A_8)\udot 2< O_{10}^-(2)$.
\item the centralizer of the Sylow $5$-subgroup is $5^2\times 3$, so the normalizer
lies in $(3 \times O_8^+(2){:}3){:}2$, and therefore has shape 
$(3 \times 5^2{:}4A_4){:}2$.
\end{itemize}
These normalizers extend to $G.S_3$ as follows.
\begin{itemize}
\item $N(7A)\cong (7{:}3\times L_3(4){:}S_3){:}2$;
\item $N(7B)\cong (7{:}3\times S_3 \times L_3(2)){:}2$;
\item $N(7^2)\cong S_3 \times 7^2{:}(3\times 2A_4)$;
\item $N(5)\cong (D_{10}\times (3\times A_8){:}2)\udot 2$;
\item $N(5^2)\cong (3^2{:}2 \times 5^2{:}4A_4){:}2$.
\end{itemize}

In particular, in this section we have proved the following.
\begin{theorem}
If $p\ge 5$, then 
no $p$-local subgroup is maximal in $G$ or any
extension of $G$ by outer automorphisms.
\end{theorem}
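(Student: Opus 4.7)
The plan is to exploit the fact that a subgroup $H$ of $G$ or any of its extensions $\widehat G \leq G.S_3$ is $p$-local precisely when $O_p(H) \neq 1$, and in that case $H \leq N_{\widehat G}(O_p(H))$. So it suffices to show, for every nontrivial $p$-subgroup $Q$ of $G$, that $N_{G.S_3}(Q)$ is strictly contained in a maximal subgroup of $G.S_3$ listed in Section~\ref{exist}; intersecting with an intermediate extension $\widehat G$ of $G$ by outer automorphisms then yields the required non-maximality statement there as well.

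First I would enumerate the nontrivial $p$-subgroups up to $G$-conjugacy. For $p\in\{11,13,17,19\}$ the Sylow is cyclic of order $p$, so Sylow's theorem leaves a single class and the only normalizer to consider is $N_G(P)$ above. For $p\in\{5,7\}$ the order of a Sylow is $p^2$; the structural information collected above (for $p=7$ the $7^2$ is self-centralizing in ${}^3D_4(2){:}3$, and for $p=5$ the centralizer of a Sylow is $5^2\times3$) implies that a Sylow is elementary abelian, so the nontrivial $p$-subgroups are exactly the cyclic subgroups of order $p$ together with the Sylow itself. The subgroups of order $7$ split into the Atlas classes $7A$ and $7B$ according to the action of the $7^2$-normalizer on the eight lines of $7^2$, while for $p=5$ the $5^2$-normalizer is transitive on lines.

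With this list in hand, for each conjugacy class of nontrivial $p$-subgroup $Q$ I would invoke the structure of $N(Q)$ already displayed above. In every case the display exhibits $N_G(Q) < M$ with $M$ one of the maximal subgroups constructed in Section~\ref{exist} (namely $(L_3(2)\times L_3(4)){:}2$, $O_{10}^-(2)$, $S_3\times U_6(2)$, ${}^3D_4(2){:}3$, $U_3(8){:}3$, or $(3\times O_8^+(2){:}3){:}2$), and the companion display does the same for the extended normalizer inside $G.S_3$. The strict inequality follows on each line by order comparison, and survives intersection with any $\widehat G$ because the host maximal subgroups $M$ contain substantially more of $G$ than the tiny $p$-local normalizer does.

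The main obstacle is justifying the normalizer structures themselves, which my plan treats as inputs. For the cyclic Sylow cases one computes $N_G(P)$ by locating $P$ inside a suitable maximal subgroup (for instance $U_3(8){:}3$ for $p=19$ or $O_{10}^-(2)$ for $p=17$) and reading off its normalizer there, then verifying via the order of $|G|$ that there is no fusion outside. For $p\in\{5,7\}$ the harder step is identifying $N_G(p^2)$ inside its host (${}^3D_4(2){:}3$ for $p=7$, and $(3\times O_8^+(2){:}3){:}2$ for $p=5$) and showing that this full $p^2$-normalizer is self-normalizing in $G$; for $p=7$ one must also check that the subgroups labelled $7A$ and $7B$ are genuinely non-conjugate in $G$, which is immediate from the different orders and Lie types of their centralizers. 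Once these structural inputs are in place the containment in a larger maximal subgroup is a matter of direct inspection, and the theorem drops out.
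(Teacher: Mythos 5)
Your proposal is correct and follows essentially the same route as the paper: enumerate the nontrivial $p$-subgroups up to conjugacy (cyclic Sylow for $p=11,13,17,19$; the classes $7A$, $7B$, $7^2$ and $5A$, $5^2$ for the elementary abelian Sylows of order $p^2$), record their normalizers in $G$ and in $G.S_3$, and observe that each normalizer is properly contained in one of the subgroups constructed in Section~\ref{exist}, so no $p$-local subgroup can be maximal in $G$ or any extension. The only difference is presentational — you make explicit the reduction via $O_p(H)$ and the intersection argument for intermediate extensions, which the paper leaves implicit.
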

This leaves just the $3$-local subgroups to determine.

\section{The $3$-local subgroups}
\label{3local}
Since we have already dealt with the centralizers of outer automorphisms
of $G$ of order $3$, we may restrict attention to the normalizers of
elementary abelian $3$-groups inside $G$.
The three classes of subgroups of order $3$ in $G$ have the following normalizers:
\begin{itemize}
\item $N(3A)\cong S_3\times U_6(2)$;
\item $N(3B)\cong (3 \times O_8^+(2){:}3){:}2$;
\item $N(3C)\cong 3^{1+6}.2^{3+6}.3^2.2 = 3\udot(3^2{:}Q_8\times 3^2{:}Q_8\times 3^2{:}Q_8){:}3^2{:}2$.
\end{itemize}
Note that in this last case, the Atlas \cite[p. 191]{Atlas}
claims the quotient $3^2{:}2$ is isomorphic to
$3\times S_3$. That this is not the case is easily seen by comparing with the
subgroup $3^{1+6}{:}(2A_4\times A_4){:}2$ of $O_7(3)$.

Next we classify the elementary abelian subgroups of order $9$ in $G$.
Note first that there is a maximal torus with normalizer 
$3^5{:}O_5(3){:}2$, in which the isotropic points are of type $3C$ and the
$+$ points of type $3B$, and the $-$ points of type $3A$. Hence we get the fusion
of classes of elements of order $3$ in $3\times O_8^+(2)$ and $3\times U_6(2)$.
Alternatively, observe that the restriction of the $2$-modular Brauer character
of degree $78$ to the former has constituents of degrees $2+28+48$, from which
the fusion of outer elements of order $3$ can also be read off. 
The full fusion of elements of order $3$ is given in the following tables.
$$\begin{array}{l|ccc|cc|}
O_8^+(2): & 3ABC & 3D & 3E&3F&3G\cr
G: &3B & 3C & 3A&3B&3C\cr
\mbox{diagonal:} & 3AA & 3BB & 3CC &3BB &3CC
\end{array}$$
$$\begin{array}{l|ccc|}
U_6(2): & 3A & 3B & 3C\cr
G: & 3A & 3C & 3B\cr
\mbox{diagonal:} & 3BB & 3AA & 3CC
\end{array}$$
Thus there are $6$ classes of $3^2$ that contain $3A$ or $3B$ elements, and they have
one of the types $3AABB$, $3AAAC$, $3ABCC$, $3BBBC$, $3BBBB$, or $3BCCC$.
Note also that the centralizer in $O_8^+(2)$ of an element of class $3F$ is $U_3(3){:}2$,
while the centralizer of an element of class $3G$ is $3^2{:}2A_4$, in which the
normal $3^2$ consists of $3D$-elements, and all other $3$-elements are in $3E$.

In order to determine the class fusion from $N(3C)$ we need to
describe the structure of this group in some detail. There is a normal subgroup of index $18$
formed from the central product of three copies of $3^{1+2}{:}Q_8$.
Acting on this is (a) an element of order $3$ extending each copy to $3^{1+2}{:}2A_4$,
(b) and element of order $3$ permuting the three copies, and (c) an involution which
swaps two copies and extends the third to $3^{1+2}{:}2S_4$.
Now the outer automorphism of order $3$ of $G$ conjugates the elements of type (b) to
the products of (b) with (a). Hence we do not need to consider these other two cosets
separately.

As we know the $3^2$ groups of type $3AAAC$ and $3BBBC$, we can identify these with
elements in one copy of $3^{1+2}$, and elements diagonal between two copies,
respectively. Hence the elements diagonal between all three copies are in $3C$.
Now in $3^{1+2}{:}2A_4$, there are two types of outer elements: one is of order $3$,
and centralizes $3 \times 3^2{:}2$, while the other is of order $9$, and is self-centralizing.
It follows that
in the coset of type (a) we see four types of groups of order $3^2$, according to how
many of the factors $3^{1+2}$ contribute something of order $9$. If none,
we obtain a $3^2$ with centralizer $3^2 \times 3^3{:}2^3{:}3$. If one, we obtain an element
of order $9$ with centralizer of order $3^4.2^2$, and if two, an element of order $9$
and centralizer of order $3^4.2$. If all three, then we again obtain an elementary
abelian $3^2$, whose centralizer is elementary abelian of order $3^4$.
In the coset of type (b) we see two types of elementary abelian groups of order $3^2$,
one with centralizer $3^2\times 3^2{:}2A_4$, the other with centralizer
elementary abelian of order $3^4$.

The first $3^2$ of type (a) must be of type $3ABCC$, and the $3^5$ normal in its
centralizer is exactly the $3^5$ with normalizer $3^5{:}O_5(3){:}2$. The other
$3^2$ of type (a) is necessarily pure $3C$.
The first $3^2$ of type (b) is then forced to be of type $3BCCC$, and the second is again
pure $3C$. In particular, there are just two classes of $3^2$ of pure $3C$ type in $G.S_3$.
However, the second class splits into three classes in $G$.

We are now ready to classify the maximal $3$-local subgroups. The strategy is
to deal first with the elementary abelian groups of pure $3C$ type, then
those that contain $3A$ elements, and finally those that contain $3B$ elements
but not $3A$ elements.
\begin{lemma}
\label{3Clocal}
Every $3C$ pure elementary abelian group has normalizer in $G$ contained in one of the following
groups:
\begin{itemize}
\item $3^{1+6}{:}2^{3+6}{:}3^2{:}2$;
\item $3^5{:}O_5(3){:}2$;
\item $3^{3+3}{:}L_3(3)$ (three classes).
\end{itemize}
\end{lemma}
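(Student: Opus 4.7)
The plan is to work entirely inside $N_G(3C)=3^{1+6}{:}2^{3+6}{:}3^2{:}2$, leveraging the detailed structural description given immediately above the lemma. The key observation is that if $E$ is a pure $3C$ elementary abelian subgroup and $x\in E$ is any non-identity element, then $E\leq C_G(x)\leq N_G(\langle x\rangle)=N_G(3C)$. Hence every such $E$ is, up to $G$-conjugacy, contained in a fixed copy of $N_G(3C)$, and it is enough to classify its $N_G(3C)$-classes and then locate each $N_G(E)$ inside one of the subgroups listed.

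I would then split on the rank of $E$. If $E$ has rank $1$, then $N_G(E)=N_G(3C)$ itself, giving the first option. If $E$ has rank $2$, the analysis in the paragraph preceding the lemma has already shown that there are exactly two $G.S_3$-classes of pure $3C$ subgroups of order $9$, both with centralizer an elementary abelian group of order $3^4$. The class coming from the coset of type (a) in which all three factors $3^{1+2}$ contribute elements of order $9$ is the one I expect to lie inside (and be normalized by) one of the three $G$-classes of $3^{3+3}{:}L_3(3)$; the class from the coset of type (b) should appear as a totally isotropic line in the $5$-dimensional orthogonal space carrying the maximal torus $3^5$, giving containment in $3^5{:}O_5(3){:}2$. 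In either case, comparing orders of centralizers and the known actions forces $N_G(E)$ into the asserted overgroup.

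For rank $\geq 3$, any pure $3C$ elementary abelian $E$ contains a rank-$2$ pure $3C$ subgroup $F$, so $E\leq C_G(F)$, which is elementary abelian of order $3^4$ by the rank-$2$ step. Hence $E$ is contained in a specific $3^4$ whose normalizer has already been placed inside one of the three listed subgroups, and the inclusion $N_G(E)\leq N_G(C_G(E))$ finishes the argument. The three $G$-classes of $3^{3+3}{:}L_3(3)$ in the conclusion correspond exactly to the splitting (noted above) of the second $G.S_3$-class of pure $3C$ $3^2$-subgroups into three $G$-classes.

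The main obstacle will be the explicit matching of the abstract $3^4$'s produced inside $N_G(3C)$ with the concrete $3^4$-subgroups visible inside $3^5{:}O_5(3){:}2$ and $3^{3+3}{:}L_3(3)$. This is a finite bookkeeping exercise, but it needs some care because the centralizer of a pure $3C$ rank-$2$ subgroup contains several more cyclic $3C$-subgroups than $E$ itself, and one must check that these extra lines are already accounted for by the purported overgroup rather than producing a further class of $3$-local maximal subgroups.
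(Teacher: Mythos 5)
Your key premise---that the preceding analysis gives exactly two $G.S_3$-classes of $3C$-pure $3^2$, both with centralizer elementary abelian of order $3^4$---is not correct as you use it, and this breaks the argument at the decisive point. The two classes with centralizer $3^4$ are only those generated by the central $3C$-element together with an element of the outer coset (a) or (b), i.e.\ $3^2$'s \emph{not} contained in $3^{1+6}$. There are in addition $3C$-pure $3^2$'s lying inside $3^{1+6}$ (the centre together with elements diagonal across all three copies of $3^{1+2}$); these are the isotropic $2$-spaces of the torus $3^5$, and their centralizer in $G$ has shape $(3\times 3^{1+4}).3^2$, of order $3^8$ and nonabelian. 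So they are not conjugate to either of the two classes you invoke, and your proposed identification of the coset-(b) class with an isotropic $2$-space of the torus is likewise inconsistent with the centralizer orders. Consequently the rank~$\ge 3$ step ``$E$ contains a rank-$2$ subgroup $F$, hence $E\le C_G(F)$ which is an elementary abelian $3^4$'' fails exactly where it matters: the $3C$-pure $3^3$'s inside $3^{1+6}$ are the groups whose normalizers are the three classes of $3^{3+3}{:}L_3(3)$ in the statement, every rank-$2$ subgroup of such a $3^3$ has the large nonabelian centralizer, and these $3^3$'s are not swallowed by any $3^4$ produced by your reduction. Your sketch therefore never accounts for the third bullet at all.

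There is also an internal gap even where the premise holds: in the rank-$2$ step you bound $N_G(F)$, but in the rank-$\ge 3$ step you need control of $N_G(C_G(F))$ (or $N_G(C_G(E))$), which is a different group; placing $N_G(F)$ in a listed subgroup does not place the normalizer of the $3^4$ there. Some canonical-subgroup argument is required, and this is precisely what the paper supplies: for $E$ not contained in $3^{1+6}$ it observes that $C_G(E)$ is an elementary abelian $3^4$ whose $3B$-elements generate a $3^3$ containing a \emph{unique} $3C$-pure $3^2$ lying inside $3^{1+6}$, so $N_G(E)$ normalizes that $3^2$ and hence lies in $3^5{:}O_5(3){:}2$ (where the $3^2$-normalizer has shape $3^5{:}3^{1+2}{:}2S_4$); while for $E\le 3^{1+6}$ it computes directly that there are three classes of $3C$-pure $3^3$ under $(Q_8\times Q_8\times Q_8){:}3^2{:}2$, each with centralizer of order $3^6$ and normalizer $3^{3+3}{:}L_3(3)$. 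You would need to reinstate this split on whether $E$ lies in $3^{1+6}$, and the explicit computation inside $3^{1+6}$, for the proof to go through.
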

\begin{proof}
Suppose we have an elementary abelian $3$-group of pure $3C$ type containing
the normal $3$ in $N(3C)$. If it does not lie in $3^{1+6}$, then its centralizer
is elementary abelian of order $3^4$, and the centralizer contains $3B$ elements.
Moreover, the subgroup generated by these $3B$-elements has order $3^3$,
and contains a unique $3C$-pure $3^2$, which lies inside $3^{1+6}$.
Now some straightforward calculations shows that the latter group contains three classes 
of $3C$-pure $3^3$ under the action of the group $(Q_8\times Q_8\times Q_8){:}3^2{:}2$,
each with centralizer of order $3^6$. Such a centralizer
consists of $3^2\times 3^{1+2}$ inside $3^{1+6}$, together with an outer element
which is of type (b) in one case, and a product of type (a) and type (b) in the other two cases. Thus these three classes are fused in $G.3$.
Hence each such group has normalizer 
$3^{3+3}{:}L_3(3)$, which is already visible inside $O_7(3)$. Moreover,
any $3^2$ subgroup of this $3^3$ has centralizer of shape $(3\times 3^{1+4}).3^2$
which is the centralizer of an isotropic $2$-space in $3^5{:}O_5(3){:}2$.
Hence the normalizer of this $3^2$ is contained in the latter group,
and has shape $3^5{:}3^{1+2}{:}2S_4$.
\end{proof}

We turn next to the $3A$ elements.
\begin{lemma}
\label{3Alocal}
The normalizer of every elementary abelian subgroup of $G$ generated
by $3A$ elements lies in one of the following:
\begin{itemize}
\item the normalizer of a pure $3C$ type elementary abelian group;
\item $(S_3\times S_3\times U_4(2)){:}2$, contained in $O_{10}^-(2)$;
\item the group $3^5{:}O_5(3){:}2$.
\end{itemize}
\end{lemma}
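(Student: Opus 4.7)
The plan is a case analysis on the rank of $E$, with the rank~$1$ case $E = \langle 3A \rangle$ already handled at the start of Section~\ref{3local} via $N(3A) = S_3 \times U_6(2)$. Using the classification of $3^2$-types listed earlier, a $3^2$ generated by $3A$-elements has at least two $3A$-subgroups, so its type is either $3AABB$ or $3AAAC$, since type $3ABCC$ contains only one $3A$-subgroup and pure $3A$-type $3^2$ does not occur in the list of six possibilities.

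For the rank~$2$ case I split on the two admissible types. If $E$ has type $3AAAC$, then its unique $3C$-subgroup is invariant under $N_G(E)$ (since $G$-conjugacy preserves the class), so $N_G(E) \leq N_G(3C)$, which falls under the first bullet via Lemma~\ref{3Clocal}. If $E$ has type $3AABB$, I realize $E$ inside $N_G(3A) = S_3 \times U_6(2)$ by pairing the central $3A$ of the $S_3$-factor with a class-$3A$ element of $U_6(2)$, which fuses to class $3A$ of $G$ by the table in Section~\ref{3local}. Since $C_{U_6(2)}(3A) = 3 \times U_4(2)$, this gives $C_G(E) = 3^2 \times U_4(2)$. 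The induced action of $N_G(E)$ on $E$ lies in the subgroup of $\mathrm{Aut}(E) \cong \mathrm{GL}_2(3)$ preserving the class partition $\{3A,3A\},\{3B,3B\}$, and both the swap of the $3A$-subgroups and the swap of the $3B$-subgroups are realized in $G$, yielding $N_G(E) \cong S_3 \times S_3 \times U_4(2)$, contained in $(S_3 \times S_3 \times U_4(2)){:}2 \leq O_{10}^-(2)$, which is the second bullet.

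For rank $\geq 3$, every pair of independent $3A$-elements in $E$ generates a $3^2$ of type $3AABB$ or $3AAAC$. If every such pair gives type $3AAAC$, then $E$ contains $3C$-elements, and the subgroup $E_C \leq E$ generated by them is nontrivial and characteristic in $E$; hence $N_G(E) \leq N_G(E_C)$, and Lemma~\ref{3Clocal} gives the first bullet. Otherwise $E$ contains a $3^2$ of type $3AABB$, and I would show that $E$ lies in a maximal torus $3^5$ by identifying the additional $3A$-generators with minus-type points in the $O_5(3)$-geometry on the torus, under the correspondence $3A \leftrightarrow -$, $3B \leftrightarrow +$, $3C \leftrightarrow$ isotropic established before Lemma~\ref{3Clocal}. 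This places $N_G(E) \leq 3^5{:}O_5(3){:}2$, the third bullet.

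The main obstacle is the last step: showing that any $3A$-generated extension of a type-$3AABB$ subgroup actually lies in a maximal torus, rather than merely in its centralizer $3^2 \times U_4(2)$. This reduces to matching up the $3A$-elements appearing in the $U_4(2)$-factor with the minus-type points of the torus geometry via the Lie-theoretic description of $3^5{:}O_5(3){:}2$, a bookkeeping exercise parallel to the analysis of $3^{1+6}$ carried out in the proof of Lemma~\ref{3Clocal}.
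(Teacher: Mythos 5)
Your rank $\le 2$ analysis is essentially the paper's and is fine (one slip: since you realise both the swap of the two $3A$-subgroups and the swap of the two $3B$-subgroups, the image of $N_G(E)$ in $\mathrm{GL}_2(3)$ is a $D_8$, and the normalizer is $(S_3\times S_3\times U_4(2)){:}2$ rather than $S_3\times S_3\times U_4(2)$; this does not affect the containment you need). The first gap is in your rank $\ge 3$ case (i): Lemma~\ref{3Clocal}, and the first bullet of Lemma~\ref{3Alocal}, apply only to \emph{pure} $3C$ groups, whereas your characteristic subgroup $E_C$ is merely generated by the $3C$-elements of $E$. Your hypothesis that every pair of independent $3A$-elements spans a $3AAAC$ does not on its face exclude $3B$-elements of $E$ (these could arise as products of three or more of the $3A$-generators), and then purity of $E_C$ is in doubt. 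The point can be repaired: fixing a $3A$-element $z\in E$ and writing $E=\langle z\rangle\times X$ with $X=E\cap U_6(2)$ inside $C_G(z)$, the fusion table forces every element of $X$ into $U_6(2)$-class $3B$, so the $3C$-elements of $E$ together with $1$ form a pure $3C$ subgroup of index $3$ --- which is exactly the dichotomy the paper uses ($E$ either contains $3B$-elements or has a unique index-$3$ pure $3C$ subgroup). But that argument is absent from your proposal, and without it the reduction to the first bullet is not justified.

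The more serious gap is case (ii), which you yourself leave as a ``bookkeeping exercise'': you do not show that a $3A$-generated $E$ containing a $3AABB$ subgroup lies in the torus, and even granting $E\le 3^5$ the inference $N_G(E)\le 3^5{:}O_5(3){:}2$ is a non sequitur --- the normalizer of a subgroup need not normalize an overgroup. What is missing is a reason why the torus is \emph{canonically} attached to $E$, and this is precisely the paper's key step: since $E\le C_G(3AABB)=3^2\times U_4(2)$, $E$ contains a $3^3$ of one of three types corresponding to the three classes of subgroups of order $3$ in $U_4(2)$; one type has a unique $3C$-subgroup and is sent back to the first bullet, while for the other two the centralizer in $G$ has a \emph{unique} Sylow $3$-subgroup, which is the torus $3^5$. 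Hence $N_G(E)$ normalizes $C_G(E)$, whose unique Sylow $3$-subgroup is again that torus, and so $N_G(E)\le 3^5{:}O_5(3){:}2$. Without this uniqueness statement (or some substitute making the torus characteristic), your third bullet is never reached, so as written the proposal does not prove the lemma.
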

\begin{proof}
From the above class fusion, 
we see that any elementary abelian $3$-group generated by $3A$ elements
either contains $3B$ elements, or has a unique subgroup of index $3$ that is
pure $3C$. The latter case corresponds to subgroups of $U_6(2)$ of pure $3B$ type,
and is covered by Lemma~\ref{3Clocal}. 
 In $U_6(2)$ the centralizer of a $3A$ element
is $3\times U_4(2)$, and the centralizer of a $3C$ element is a soluble group of order
$2^3.3^5$, and shape $3^4{:}(2\times A_4)$.

There is a unique $3^2$ of type $3AABB$, and it has normalizer
$(S_3\times S_3\times U_4(2)){:}2$, contained in $O_{10}^-(2)$. 
Any $3^3$ generated by $3A$ elements lies inside $3^2\times U_4(2)$,
corresponding to one of the three classes of subgroups of order $3$ in $U_4(2)$.
Hence there are three
types of $3^3$ generated by $3A$ elements and containing $3B$ elements.
One has a unique subgroup of order $3$ containing $3C$ elements, so we can ignore this case.
The other two have either 3 cyclic subgroups of type $3A$, and 6 of type $3B$,
or vice versa, and in both cases the centralizer contains a unique Sylow $3$ subgroup,
which is the torus of order $3^5$ described above.
\end{proof}

Hence we reduce to considering elementary abelian $3$-groups which contain 
$3B$ elements but no
$3A$ elements. 
\begin{lemma}
The normalizer in $G$ of every elementary abelian group that contains
$3B$ elements but no $3A$ elements lies in one of the following:
\begin{itemize}
\item $N(3B)$;
\item $3^2{:}Q_8 \times U_3(3){:}2$;
\item $N(3A)$;
\item $N(3C)$.
\end{itemize}
\end{lemma}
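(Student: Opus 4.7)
The plan is to reduce the lemma to analyzing the three types of $3^2$ subgroup that contain $3B$ but no $3A$ elements, and then to locate each such type inside one of the four listed maximal subgroups. From the classification of $3^2$ classes given before Lemma~\ref{3Clocal}, these three types are $3BBBB$, $3BBBC$, and $3BCCC$. Any larger elementary abelian $3$-group $E$ of the kind considered in the lemma contains a $3^2$ of one of these types, so $N_G(E)$ is contained in the normalizer of that $3^2$; it therefore suffices to handle the three $3^2$ cases.

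For the mixed types $3BBBC$ and $3BCCC$, each contains a $3C$-element, so the $3^2$ lies inside a conjugate of $N(3C)=3^{1+6}{:}2^{3+6}{:}3^2{:}2$. Both were in fact already pinpointed in the case-by-case description of $N(3C)$ given earlier: $3BCCC$ is realized as the first type-(b) $3^2$, with centralizer $3^2\times 3^2{:}2A_4$, while $3BBBC$ is realized as a $3^2$ diagonal between two of the three central $3^{1+2}$ factors. In either case the normalizer is contained in $N(3C)$.

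For the pure type $3BBBB$, fix such a $3^2$ and pick a non-identity element $z$ of it; then the full $3^2$ lies in $C_G(z)\leq N(3B)=(3\times O_8^+(2){:}3){:}2$, with $z$ identified with the central $3B$. The image of a second generator in the $O_8^+(2)$ quotient must again fuse to $3B$ in $G$, so by the fusion table it lies in one of the $O_8^+(2)$-classes $3A$, $3B$, $3C$ (the torus classes) or $3F$. In the first three cases the $3^2$ lies in the maximal torus $3^5$ of $G$, with normalizer $3^5{:}O_5(3){:}2$, which was shown in the proof of Lemma~\ref{3Alocal} to be contained in $N(3A)$. In the remaining case, the centralizer of a $3F$-element in $O_8^+(2)$ is $U_3(3){:}2$ as noted earlier, so the $3^2$ sits inside $3\times U_3(3){:}2$ and its $G$-normalizer is contained in $3^2{:}Q_8\times U_3(3){:}2$.

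The main obstacle is the clean execution of this final dichotomy: one must verify that the $O_8^+(2)$-image of any pure-$3B$ $3^2$ cannot escape the classes $3A, 3B, 3C, 3F$, and that once it lies in one of them, no further exotic embedding of the $3^2$ into $G$ arises beyond the torus or the $U_3(3){:}2$ centralizer. Both verifications are routine using the fusion table and the $O_8^+(2)$ centralizer data recalled above, but they constitute the bulk of the work. Once they are in place, the four possibilities listed in the lemma are exactly what the case analysis produces.
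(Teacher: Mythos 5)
Your reduction to the three $3^2$ types is where the argument breaks. From the fact that $E$ contains a $3^2$ subgroup $P$ of type $3BBBB$, $3BBBC$ or $3BCCC$ you conclude that $N_G(E)$ is contained in $N_G(P)$; but $N_G(E)$ only permutes the $3^2$ subgroups of $E$ and need not normalize any chosen one. The containment $N_G(E)\le N_G(P)$ is available only when $P$ (or some other subgroup) is canonically determined by $E$, for instance a \emph{unique} cyclic subgroup of a given $G$-class, or the subgroup generated by all $3B$-elements of $E$. That is exactly how the paper proceeds: it works inside $C_G(z)=3\times O_8^+(2){:}3$ for a $3B$-element $z\in E$, treats groups lying in $3\times O_8^+(2)$ by repeating the argument of Lemma~\ref{3Alocal} with $3A$ and $3B$ interchanged, and in the outer ($3F$ and $3G$) cases uses the fusion of the $3$-elements of $U_3(3)$ and of $3^2{:}2A_4$ into $O_8^+(2)$ to show that any elementary abelian group strictly larger than the $3^2$ contains a unique cyclic subgroup of type $3A$ or $3C$, or has its $3B$-elements generating a proper subgroup already treated; this is what forces $N_G(E)$ into $N(3A)$, $N(3C)$, or an earlier case. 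Your proposal never addresses groups of rank at least $3$, and that is the bulk of the lemma.

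A second concrete error occurs in your $3BBBB$ analysis. If the second generator maps to one of the torus classes $3A,3B,3C$ of $O_8^+(2)$, then by the fusion table the diagonal elements of the $3^2$ lie in $G$-class $3A$, so this branch is vacuous for a pure-$3B$ $3^2$; you instead retain it and dispose of it by asserting that $3^5{:}O_5(3){:}2$ was shown to lie in $N(3A)$, which is false --- the torus normalizer is a separate maximal $3$-local subgroup (contained in $O_7(3)$, and listed separately in Lemma~\ref{3Alocal}), and it is not on the present lemma's list, so even granting your reduction this branch would not reach the stated conclusion. A smaller slip of the same kind: for type $3BCCC$, containing a $3C$-element only places the $3^2$ itself inside a conjugate of $N(3C)$; its normalizer is instead pinned down by its \emph{unique} $3B$-cyclic subgroup, giving $N(3B)$ as in the paper (for $3BBBC$ the unique $3C$-cyclic subgroup does give $N(3C)$, but that uniqueness, not mere membership, is the needed argument).
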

\begin{proof}
If the elementary abelian group lies in $3\times O_8^+(2)$, then
essentially the same argument as in Lemma~\ref{3Alocal}, with $3A$ and $3B$ interchanged, 
proves that we are in one of the cases already considered.
This is because in $O_8^+(2)$ we have $C(3A/B/C)\cong 3\times U_4(2)$, while
$C(3E)$ contains a unique elementary abelian $3^4$.
If it contains an outer element of $O_8^+(2){:}3$, then either this is in
class $3F$ or class $3G$ of $O_8^+(2){:}3$, in Atlas notation. In the $3F$ case,
we obtain a pure $3B$ type $3^2$, with normalizer
$3^2{:}Q_8\times U_3(3){:}2$. 
Now the class fusion from $U_3(3)$ to $O_8^+(2)$ goes via $S_6(2)$ classes $3B$ and
$3C$ respectively, so $O_8^+(3)$ classes $3D$ and $3E$. Thus any larger elementary
abelian $3$-group containing the $3^2$ of type $3BBBB$ either contains 
a unique cyclic subgroup containing $3A$ elements,
or contains a unique cyclic subgroup containing $3C$ elements.

In the $3G$ case, the $3^2$ has type $3BCCC$, so its normalizer lies in $N(3B)$.
Its centralizer in $O_8^+(2)$ is a group of shape $3^2{:}2A_4$, in which the normal
$3^2$ consists of $3D$ elements, and all other $3$-elements are in class $3E$.
Again, $3E$ elements fuse to $3A$ in $G$, so can be excluded. We can also assume that
our elementary abelian $3$-group contains no elements of class $3F$ in $O_8^+(2){:}3$.
But this implies that any remaining elementary abelian $3$-group contains $3B$ elements
but is not generated by them. Hence its normalizer is contained in a case already
considered.
\end{proof}

 This concludes the proof of the following theorem.

\begin{theorem}
Every $3$-local subgroup of $G$ is contained in one of the following subgroups:
\begin{itemize}
\item $N(3A)=S_3\times U_6(2)$;
\item $N(3B)=(3\times O_8^+(2){:}3){:}2$;
\item $N(3C)=3^{1+6}{:}Q_8^3{:}3^2{:}2$;
\item $N(3AABB)=(S_3\times S_3\times U_4(2)){:}2$, contained in $O_{10}^-(2)$;
\item $N(3B^2)=3^2{:}Q_8\times U_3(3){:}2$;
\item $3^5{:}O_5(3){:}2$, contained in $O_7(3)$;
\item $3^{3+3}{:}L_3(3)$, contained in $O_7(3)$ (three conjugacy classes).
\end{itemize}
\end{theorem}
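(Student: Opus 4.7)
The plan is to collate the three preceding lemmas into a single classification. First I would observe that every $3$-local subgroup $H$ of $G$ normalizes some non-trivial elementary abelian $3$-subgroup (for instance, $\Omega_1(Z(O_3(H)))$ is characteristic in $O_3(H)$, hence normal in $H$), so it suffices to bound the normalizer $N_G(E)$ for every non-trivial elementary abelian $3$-subgroup $E\le G$.

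Next I would split according to the rank of $E$. In the rank-one case, $N_G(E)$ is one of the three normalizers $N(3A)$, $N(3B)$, $N(3C)$, each of which appears explicitly in the theorem's list. In the higher-rank case, I would further subdivide according to which of the $G$-classes $3A$, $3B$, $3C$ are actually represented in $E$, and invoke the appropriate one of the three lemmas.

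If $E$ is pure $3C$, Lemma~\ref{3Clocal} gives containment in $N(3C)$, in $3^5{:}O_5(3){:}2$, or in one of the three classes of $3^{3+3}{:}L_3(3)$. If $E$ contains a $3A$-element, Lemma~\ref{3Alocal} places $N_G(E)$ inside a pure-$3C$ normalizer (reducing to the previous case), inside $(S_3\times S_3\times U_4(2)){:}2 = N(3AABB)$, or inside $3^5{:}O_5(3){:}2$. Finally, if $E$ contains a $3B$-element but no $3A$-element, the third lemma places $N_G(E)$ inside $N(3B)$, $N(3A)$, $N(3C)$, or $3^2{:}Q_8\times U_3(3){:}2 = N(3B^2)$.

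Since every resulting possibility appears in the theorem's list, nothing remains beyond bookkeeping. I do not expect a serious obstacle at the level of the theorem itself; the genuine difficulty has already been discharged in establishing the three lemmas and the class-fusion tables from $O_8^+(2)$, $U_6(2)$ and $O_5(3)$ into $G$, which is what lets one name the type $3AABB$, $3AAAC$, $3ABCC$, $3BBBC$, $3BBBB$, $3BCCC$ of each $3^2$ and identify $(S_3\times S_3\times U_4(2)){:}2$ and $3^2{:}Q_8\times U_3(3){:}2$ with $N(3AABB)$ and $N(3B^2)$ respectively. The only remaining care is to verify that no pair of list entries has been accidentally conflated or omitted in the transition from the lemmas.
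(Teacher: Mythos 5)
Your proposal is correct and is essentially the paper's own argument: the theorem is obtained by collating the three preceding lemmas after exactly the same case split (pure $3C$ type, then groups containing $3A$ elements, then groups containing $3B$ but no $3A$ elements), the paper having already reduced to normalizers of elementary abelian $3$-subgroups of $G$ at the start of the section. The only points you make explicit that the paper leaves implicit are the $\Omega_1(Z(O_3(H)))$ reduction and the passage from ``contains a $3A$ element'' to the subgroup generated by the $3A$ elements (which is normalized by $N_G(E)$), both routine.
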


\section{List of known conjugacy classes of simple subgroups}
\label{ccexist}
The main part of the proof below is a classification of simple subgroups
up to conjugacy. In order to facilitate this proof, we first list as
many conjugacy classes of simple subgroups as we can.
Tables~\ref{simpleslarge}, \ref{simples7} and \ref{simples5} 
contain one row for each conjugacy class in $G.S_3$,
of known simple subgroups $S$. We give the normalizer $N$ in $G.S_3$, and the number $n$
of conjugacy classes in $G$ (which may be $1,2,3$ or $6$), as well as a
maximal overgroup $M$ of $N$ in many cases.
The next several sections are devoted to proving the results
contained in these tables.

\begin{table}
\caption{\label{simpleslarge}Some simple subgroups, I}
(a) with elements of order $19$
$$\begin{array}{llll}
S&N&M&n\cr\hline
U_3(8) & (3\times U_3(8){:}3){:}2 && 1\cr\hline
\end{array}$$
(b) with elements of order $17$ but not $19$
$$\begin{array}{lllll}
S & N & M & n&\mbox{Notes}\cr
\hline
O_{10}^-(2) & (3\times O_{10}^-(2)){:}2 &&1&7A\cr
F_4(2) & F_4(2)\times 2 &&3\cr
S_8(2) & S_8(2)\times S_3 & (3\times O_{10}^-(2)){:}2 &1&7A\cr
S_8(2) & S_8(2)\times 2 & 2\times F_4(2) &3&7B\cr
O_8^-(2)& O_8^-(2){:}2 \times S_3 & (3\times O_{10}^-(2)){:}2 &1&7A\cr
O_8^-(2) & O_8^-(2){:}2 \times 2 & 2\times F_4(2) &3&7B\cr
S_4(4) & S_4(4){:}2\times S_3 & (3\times O_{10}^-(2)){:}2 & 1\cr
L_2(17) & L_2(17)\times S_3 & (3\times O_{10}^-(2)){:}2 & 1\cr
L_2(17) & L_2(17)\times 2 & 2\times F_4(2) & 3\cr
L_2(16) & L_2(16){:}2\times S_3 & (3\times O_{10}^-(2)){:}2 & 1\cr
L_2(16) & L_2(16){:}2\times S_3 & (3\times O_{10}^-(2)){:}2 & 1\cr
\hline
\end{array}$$
(c) with elements of order $13$ but not $17$ or $19$
$$\begin{array}{lllll}
S&N&M&n&\mbox{Notes}\cr\hline
{}^3D_4(2) & {}^3D_4(2){:}3\times S_3 &&1& 3A,3C,7B,7A\cr
{}^3D_4(2) & {}^3D_4(2){:}3\times 2 & 2\times F_4(2) &3& 3B,3C,7A,7B\cr
{}^2F_4(2)' & 2 \times {}^2F_4(2) & 2\times F_4(2) & 3\cr
L_2(25) & 2\times L_2(25)\udot 2 & 2\times F_4(2) & 3\cr
L_4(3) & 2\times L_4(3){:}2 & 2\times F_4(2) & 3\cr
L_3(3) & 2\times L_3(3){:}2 & 2\times F_4(2) & 3\cr
Fi_{22} & Fi_{22}{:}2 && 3&7B\cr
O_7(3) & O_7(3){:}2 && 3&7B\cr
G_2(3) & G_2(3){:}2 & Fi_{22}{:}2 & 3&7B\cr
L_2(13) & L_2(13){:}2 & Fi_{22}{:}2 & 3&7B\cr
\hline
\end{array}$$
(d) with elements of order $11$ but not $13$, $17$ or $19$
$$\begin{array}{lllll}
S&N&M&n&\mbox{Notes}\cr\hline
A_{12} & (3\times A_{12}){:}2 & (3\times O_{10}^-(2)){:}2 & 1&7A\cr
A_{11} & (3\times A_{11}){:}2 & (3\times O_{10}^-(2)){:}2 & 1&7A\cr
M_{12} & M_{12}\times 3 & (3\times O_{10}^-(2)){:}2 & 2\cr
M_{11} & M_{11}\times 3 & (3\times O_{10}^-(2)){:}2 & 2\cr
M_{11} & M_{11}\times 3 & (3\times O_{10}^-(2)){:}2 & 2\cr
L_2(11) & S_3 \times (3\times L_2(11)){:}2 & U_6(2){:}3\times S_3 & 1 & 2B,3B\cr
L_2(11) & (L_2(11)\times 3){:}2 & (3\times O_{10}^-(2)){:}2 & 1 & 2C,3C\cr
M_{22} & S_3 \times M_{22}{:}2 & S_3\times U_6(2){:}S_3 & 3&7B\cr
U_6(2) & S_3 \times U_6(2){:}S_3 &&1&7B\cr
U_5(2) & S_3 \times (3\times U_5(2)){:}2 & S_3 \times U_6(2){:}S_3 &1\cr
\hline
\end{array}$$
Note: in fact this is a complete list of conjugacy classes of the given
simple groups. This fact is proved in this paper.
\end{table}

\begin{table}
\caption{\label{simples7}Some simple subgroups, II}
(e) with elements of order $7$ but not $11,13,17,19$
$$\begin{array}{lllll}
S&N&M&n&\mbox{Notes}\cr\hline
O_8^+(2) & O_8^+(2){:}3^{1+2}{:}2^2 &&1&7A\cr
O_8^+(2) & 2\times O_8^+(2){:}S_3 & 2\times F_4(2) &3&7B\cr
A_{10} & S_3\times S_{10} & (3\times O_{10}^-(2)){:}2 & 1&7A\cr
A_{10} & 2\times S_{10} & 2\times F_4(2) & 3&7B\cr
A_9 & (A_9\times 3^2{:}2){:}2 & O_8^+(2){:}3^{1+2}{:}2^2 & 1&7A\cr
A_9 & S_9\times 2 & 2\times F_4(2) & 3&7B\cr
A_8 & (A_8\times A_5\times 3){:}2^2 & (3\times O_{10}^-(2)){:}2 & 1&7A\cr
A_8 & (A_8\times A_4\times 3){:}2^2 & (3\times O_{10}^-(2)){:}2 & 1&7A\cr
A_7 & (A_7\times A_5\times 3){:}2^2 & (3\times O_{10}^-(2)){:}2 & 1&7A\cr
A_7 & S_7\times S_3 & N(3A) & 3 & 3A,3C,7B\cr
A_7 & S_7\times S_3 & N(3A) & 3 & 3C,3C,7B\cr
S_6(2) & S_3\times S_3\times S_6(2) & (3\times O_{10}^-(2)){:}2 & 1&7A\cr
S_6(2) & 2\times S_3\times S_6(2) & 2\times F_4(2) & 3 &7B\cr
S_6(2) & 2\times S_6(2) &2^{1+20}{:}U_6(2){:}S_3 & 6 & 7B\cr
L_2(8) & S_3\times S_3\times L_2(8){:}3 & S_3\times {}^3D_4(2){:}3 & 1 & 7A\cr
L_2(8) & 2 \times L_2(8){:}3& 2 \times F_4(2) & 3 & 7A\cr
L_2(8) & S_3\times S_3\times L_2(8){:}3 & S_3\times U_6(2){:}S_3 & 1&7B\cr
L_2(8) & 2 \times L_2(8){:}3 & N(2A) & 3 & 7B\cr
L_2(8) & 2 \times L_2(8){:}3 & N(2A) & 2 & 7B\cr
L_2(8) & 2^2\times L_2(8) & N(2A) & 3 & 7B\cr
L_2(8) & 2^2\times L_2(8) & N(2A) & 3 & 7B\cr
L_2(8) & 2 \times L_2(8) & N(2A) & 6 & 7B\cr
L_2(8) & 2 \times L_2(8) & N(2A) & 6 & 7B\cr
U_4(3) & S_3 \times U_4(3).2^2 & S_3\times U_6(2){:}S_3 & 3 & 7B\cr
L_3(4) & (L_3(2)\times L_3(4){:}2){:}S_3 &&1&2B,3B,7B\cr
L_3(2) & L_3(2){:}2 \times L_3(2){:}2 & (L_3(2)\times L_3(4){:}S_3){:}2 & 3 & 2B,3B,7B\cr
L_3(2) & (L_3(2)\times L_3(4){:}2){:}S_3 &&1&2A,3A,7A\cr
L_3(2) & L_3(2) \times 2^4{:}(3\times A_5){:}2 & N(2A^3) & 1 & 2B,3A,7A\cr
L_3(2) & L_3(2){:}2\times S_3 &(3\times O_{10}^-(2)){:}2&1& 2C,3C,7A\cr
L_3(2) &L_3(2){:}2&(L_3(2)\times L_3(4){:}S_3){:}2&3& 2C,3C,7B\cr
U_3(3) & U_3(3){:}2\times 3^2{:}2S_4 && 1&7A\cr
U_3(3) & U_3(3){:}2 \times 2\times S_3 & 2\times F_4(2) & 3 & 7B\cr
\hline
\end{array}$$
Note: This list is claimed to be complete in the cases
$O_8^+(2)$, $A_{10}$, $A_9$, $A_8$, $A_7$, $S_6(2)$, $L_2(8)$, but not
necessarily in the cases $L_2(7)$,
$L_3(4)$, $U_3(3)$, and $U_4(3)$.
\end{table}
\begin{table}
\caption{\label{simples5}Some simple subgroups, III}
(f) without elements of order $7,11,13,17,19$
$$\begin{array}{lllll}
S&N&M&n&\mbox{Notes}\cr\hline
U_4(2) & 3^2{:}D_8 \times (3\times U_4(2)){:}2 & (3\times O_{10}^-(2)){:}2 & 1& U_4(2)<U_6(2)\cr
U_4(2) & 2\times S_3\times U_4(2){:}2 & S_3\times U_6(2){:}S_3 & 3 & O_6^-(2)<U_6(2)\cr
A_6 & S_6\times S_6\times S_3 & (3\times O_{10}^-(2)){:}2 & 1\cr
A_6 & (A_6.2\times L_3(2)){:}2 & (L_3(4){:}S_3\times L_3(2)){:}2 & 3& 2B,3B\cr
A_5 & S_5\times S_6 \times S_3 & (3\times O_{10}^-(2)){:}2 & 1& 2B,3A\cr
A_5 & (A_5\times 3\times A_8){:}2^2 & (3\times O_{10}^-(2)){:}2 & 1 & 2B,3B\cr
A_5 & S_5\times 2^3{:}L_3(2) & N(2A^3) & 3 & 2B,3B\cr
A_5 & S_5\times S_3 & (3\times O_{10}^-(2)){:}2& 1 & 2C,3C\cr
A_5 & (A_5\times 2^3{:}S_4){:}2 & & 3 & 2C,3B\cr
A_5 & (A_5 \times 2^4{:}3^2{:}2){:}2 & & 1&2C,3A\cr
A_5 & (A_5\times [2^7.3]).2& & 3 & 2B,3B\cr
A_5 & (A_5\times [2^5.3^2]).2 & & 1 & 2B,3B\cr
A_5 & A_5 \times [2^6.3] & & 3 & 2C,3B\cr
A_5 & (A_5\times [2^4]).2 & & 3 & 2C,3B\cr
\hline
\end{array}$$
Note: This list is claimed to be complete in the cases $A_5$ and $U_4(2)$, but not
necessarily in the case $A_6$.
\end{table}
In fact, most of the known simple subgroups lie in the centralizer
of some (inner or outer) automorphism, so we deal with these first.
We start by centralizing elements of order $7$, followed by $5$,
$3$ and $2$. We conclude with some subgroups of $Fi_{22}$.
Structures of normalizers are given in $G.S_3$ unless otherwise stated.

\section{Simple subgroups centralizing an element of order $7$}
\label{cent7}
The centralizers of elements of order $7$ in $G$ are $C(7A)\cong 7\times L_3(4)$ and
$C(7B)\cong 7\times L_3(2)$. Power maps show that 
a $7A$ element commutes with elements in classes
$2B,3B,4D,4E,4F,5A$ and that a $7B$ element commutes with elements
of classes $2A$ and $3A$, and $4A$. 
It is clear from the $7$-local analysis that in $L_3(2)\times L_3(4)$
the $L_3(2)$ factor contains $7A$ elements and the $L_3(4)$ contains $7B$ elements.
It follows immediately that every $L_3(2)\times L_3(2)$ contains one factor with $7A$-elements
and the other with $7B$-elements, so there is no automorphism swapping the two factors.

We have $N(7B)=(7{:}3\times S_3\times L_3(2)){:}2)$ in $G.S_3$, so the only
simple group centralizing a $7B$ element is an $L_3(2)$ with normalizer
$(L_3(2)\times L_3(4){:}2){:}S_3$. Also $N(7A)=(7{:}3\times L_3(4){:}3{:}2){:}2$,
and the normalizers of the simple subgroups of $L_3(4){:}D_{12}$ are
\begin{itemize}
\item One class of $A_6\udot 2^2$.
\item One class of $L_3(2){:}2\times 2$.
\item One class of $S_3\times S_5$.
\item One class of $S_5$.
\end{itemize}
All these simple groups centralize $L_3(2)$ in $G$, but may centralize more.
First, the $L_3(2)$ contains a $7B$ element, so the centralizer does not grow,
and the normalizer in $G.S_3$ is $L_3(2){:}2\times L_3(2){:}2$. The other groups
all contain elements of order $5$, so the centralizer lies in $(3\times A_8){:}2$.
The $A_5$ in $S_3\times S_5$ must be the one with normalizer
$(A_5\times 3\times A_8){:}2^2$. The other two normalizers do not contain
the full $A_8$, but in the case of $A_6$ the normalizer contains $L_3(2)$ 
and an involution that normalizes the $A_6$ and centralizes the element
of order $5$, and hence the normalizer is $(L_3(2)\times A_6.2){:}2$.
In the case of the last $S_5$, the normalizer is in fact 
$2^3{:}L_3(2)\times S_5$.

\begin{remark}
From the class fusion to the factors of $L_3(2)\times L_3(4)$ we can compute the
restriction of the character of degree $1938$ to these factors. There is then only one
way to fit them together into characters of the direct product, thus:
$$1\otimes 1 + 3a\otimes 45a + 3b\otimes 45b + 6\otimes (1^2+20^2)
+7\otimes 35abc + 8\otimes(1+20+64).$$
Hence we can compute the character value on the diagonal involutions to be $18$,
and on the diagonal elements of order $3$ to be $-6$. It follows that these
diagonal elements are in classes $2C$ and $3C$.
In particular, the diagonal  copies of $L_3(2)$ in $L_3(2)\times L_3(4)$ are of type
$(2C,3C,7A)$ and $(2C,3C,7B)$. It is not immediately obvious what the centralizers
of these copies of $L_3(2)$ are, and we shall come back to this problem later.
\end{remark}

We conclude this section by summarizing the consequences for the classification of
maximal subgroups with non-simple minimal normal subgroups.
\begin{theorem}
There is exactly one class of characteristically simple subgroup
of $G.S_3$ that has order divisible by $7$ and is not simple.
Such groups are isomorphic to $L_3(2)\times L_3(2)$ and have
normalizers $L_3(2){:}2\times L_3(2){:}2$, all of which are contained in
$(L_3(2)\times L_3(4){:}S_3){:}2$. The class splits into three
classes in $G$.
\end{theorem}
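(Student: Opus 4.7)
My plan is to reduce, via a centralizer argument, to the case $H\cong L_3(2)\times L_3(2)$, then appeal to the preceding analysis in Section~\ref{cent7} for uniqueness and the normaliser, and finally pin down the $G$-class splitting from the outer automorphism action.

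A characteristically simple non-simple group has the form $H\cong T^k$ for a nonabelian simple $T$ and $k\ge 2$, and the hypothesis $7\mid|H|$ forces $7\mid|T|$. Since one copy of $T$ must centralise an order-$7$ element of another copy, $T$ embeds (faithfully, because $T$ is perfect) into $C_G(x)/\langle x\rangle$ for some element $x$ of class $7A$ or $7B$, that is, into $L_3(4)$ or $L_3(2)$. The only nonabelian simple subgroups of $L_3(4)$ of order divisible by $7$ are $L_3(2)$ and $L_3(4)$, and of $L_3(2)$ only $L_3(2)$ itself, so $T\in\{L_3(2),L_3(4)\}$. I then rule out all possibilities other than $L_3(2)\times L_3(2)$: for $L_3(4)\times L_3(4)$, an order-$7$ element $x$ of the first factor cannot lie in $7B$ (else the second factor would embed in $7\times L_3(2)$, absurd), so $x\in 7A$ and the second factor is the $L_3(4)$ of $C_G(7A)$, whose own order-$7$ elements lie in class $7B$ (by the analysis of Section~\ref{cent7}); but then the first $L_3(4)$ is forced into $C_G(7B)=7\times L_3(2)$, a contradiction. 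For $L_3(2)^k$ with $k\ge 3$, the analogous argument places one factor as an $L_2(7)\le L_3(4)\le C_G(7A)$, and since $L_2(7)$ acts absolutely irreducibly on the natural $\mathbb{F}_4$-module for $L_3(4)$, its centraliser there is trivial, leaving no room for a third copy.

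For $H\cong L_3(2)\times L_3(2)$ itself, the observation from Section~\ref{cent7} that one factor contains $7A$-elements while the other contains $7B$-elements embeds $H$ in the maximal-rank subgroup $(L_3(2)\times L_3(4){:}S_3){:}2$, with the $7A$-factor identified with the first $L_3(2)$ and the $7B$-factor identified with an $L_2(7)\le L_3(4)$. Uniqueness up to $G.S_3$-conjugacy then follows from the fact that all $L_2(7)$-subgroups of $L_3(4)$ are fused under $L_3(4){:}S_3$, and the identity $N_{G.S_3}(H)=L_3(2){:}2\times L_3(2){:}2$ follows from the normaliser of $L_2(7)$ in $L_3(4){:}S_3$ being $L_2(7){:}2$ together with the absence of any factor-swap. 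The three-class splitting in $G$ amounts to showing that the outer $S_3$ of $G$ has an orbit of size three on the $G.S_3$-class of $H$, equivalently that precisely one non-trivial element of this $S_3$ stabilises a fixed $G$-representative. I expect the main obstacle to be this last point --- the outer-automorphism bookkeeping --- which requires matching the $2_1$/$2_2$ Atlas distinction on $L_3(4){:}S_3$ against the diagonal $3$-automorphism of $G$, and verifying that the latter acts non-trivially on the $L_3(4)$-part of the rank-$2$ subgroup.
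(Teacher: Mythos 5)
Your route is essentially the paper's: everything is driven by $C_G(7A)=7\times L_3(4)$, $C_G(7B)=7\times L_3(2)$ and the fusion fact from Section~\ref{cent7} that in the rank‑2 subgroup the $L_3(2)$ factor carries $7A$- and the $L_3(4)$ factor $7B$-elements. Your explicit eliminations of $L_3(4)\times L_3(4)$ and of $L_3(2)^k$ for $k\ge 3$ are correct and in fact more detailed than the paper's (for $k\ge3$, note that both extra factors lie in the same $C_G(x)=7\times L_3(4)$ for a fixed $7A$-element $x$ of the first factor, project faithfully to $L_3(4)$, and the projection of one is an $L_2(7)$ with trivial centralizer there, as you say). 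The identification of $H$ inside $(L_3(2)\times L_3(4){:}S_3){:}2$ and the uniqueness in $G.S_3$ via fusion of the three $L_3(4)$-classes of $L_2(7)$ under the diagonal automorphism is exactly how the paper proceeds.

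The one piece you leave open --- the splitting into three $G$-classes (and, implicitly, the direct-product form of the normaliser) --- is genuinely required by the statement, but it is not the obstacle you fear, and it closes with facts you have already used. Since $C_G(T_1)=L_3(4)$ exactly (where $T_1$ is the $7A$-type factor), the $S_3$ on top of $L_3(4)$ in $C_{G.S_3}(T_1)$ maps isomorphically onto $\mathrm{Out}(G)$; its order-3 part must be the diagonal automorphism of $L_3(4)$ (the only order-3 outer automorphism), which is precisely what fuses the three classes of $L_2(7)$ --- so this is forced, not something still to be verified. Consequently no element of $G.S_3$ mapping to an order-3 outer automorphism can normalise $H$: it would normalise $T_1$, hence act on $C_G(T_1)=L_3(4)$ as diagonal-times-inner and move the $L_3(4)$-class of $T_2$. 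On the other hand the paper's observation that this $L_3(4){:}S_3$ contains automorphisms of Atlas type $2_2$ (field type, with fixed subgroup $L_3(2)$ in $L_3(4)$) gives an outer involution centralising a representative $H=T_1\times T_2$. Hence the image of $N_{G.S_3}(H)$ in $\mathrm{Out}(G)\cong S_3$ has order exactly $2$, so the $G.S_3$-class splits into $6/2=3$ classes in $G$, and combining this centralising involution with the normalising involution of $T_2$ inside $L_3(4){:}S_3$ (and the absence of a factor swap) yields $N_{G.S_3}(H)=L_3(2){:}2\times L_3(2){:}2$, as required.
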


\section{Simple subgroups centralizing an element of order $5$}
\label{cent5}
In $G.S_3$ we have $N(5A)=(D_{10}\times 3\times A_8){:}2^2$, and the normalizers of the
simple subgroups of $S_8$ are
\begin{itemize}
\item One class of $S_7$.
\item One class of $S_6\times 2$.
\item One class of $S_5\times 3$.
\item One class of $S_5\times 2$.
\item One class of $L_3(2){:}2$.
\item One class of $L_3(2)$.
\end{itemize}
All these simple groups centralize $3\times A_5$, but may centralize more.
The ones with order divisible by $7$ have centralizers inside $L_3(4){:}S_3$.
Clearly the $L_3(2)$ in $L_3(2){:}2$ is the one which centralizes $L_3(4)$.
The other $L_3(2)$ centralizes $2^4{:}A_5$, and has normalizer
$L_3(2)\times 2^4{:}(3\times A_5){:}2$, lying inside the maximal parabolic
subgroup of shape $2^3.2^4.2^{12}.2^{12}.(S_5\times L_3(2)\times S_3)$ in $G.S_3$,
and remaining a single class in $G$.

Since there is no parabolic subgroup containing $A_7\times A_5$, the
$A_7$ has normalizer just $(3\times A_5\times A_7){:}2^2$. The $A_5$
in $S_5\times S_3$ centralizes a $3B$ element and is therefore conjugate 
in $O_8^+(2){:}S_3$ to
the one that centralizes $A_8$.
The $S_6$ lies in $O_{10}^-(2)$ acting as $S_4(2)$, so centralizes $S_6$.
Hence the normalizer of the $A_6$ is $S_6\times S_6\times S_3$ in $G.S_3$.
The remaining $A_5$ also centralizes the same $S_6$, so has normalizer
$S_5\times S_6\times S_3$.

Power maps give the class fusion from $A_8$ to $G$. The elements of
$A_8$-classes $2A,2B,3A,3B$ fuse to $G$-classes $2A,2B,3B,3A$ respectively.
In particular, the $5$-point $A_5$ in $A_8$ is of type $(2B,3B)$, while the $6$-point
$A_5$ is of type $(2B,3A)$.

Since $S_6$ is maximal in $A_8$, it follows that the centralizer in both $O_{10}^-(2)$ and
$G$ of this $A_5$ of type $(2B,3A)$
is exactly $S_6$. It follows that there are just two 
classes of $A_5\times A_5$ in which the two factors are conjugate in $G$.
One of these has factors of type $(2B,3B)$ and centralizer of order $3$, so its
normalizer in $G$ is contained in $(3\times O_8^+(2){:}3){:}2$. The other has factors
of type $(2B,3A)$, and has trivial centralizer, and its normalizer 
in $G$ is contained in
$(S_6\times S_6){:}2$ in $O_{10}^-(2)$. Similarly, there is a unique conjugacy class of
$A_6\times A_6$, with normalizer $(S_6\times S_6){:}2$ in $G$, contained in $O_{10}^-(2)$. 

We summarize the consequences for maximal subgroups with non-simple 
minimal normal subgroups.
\begin{theorem}
There are four classes of characteristically simple subgroups of $G.S_3$
that have order divisible by $5$ and are not simple, as follows.
\begin{itemize}
\item One class of $A_6\times A_6$, with normalizer $S_6\wr 2 \times S_3$,
contained in $(3\times O_{10}^-(2)){:}2$.
\item One class of $A_5\times A_5$ with normalizer contained in
$N(3B)$.
\item Two classes of $A_5\times A_5$ with normalizer contained in
$S_6\wr 2 \times S_3$.
\end{itemize}
\end{theorem}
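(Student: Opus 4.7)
Since the non-abelian characteristically simple finite groups are precisely the direct products of isomorphic nonabelian simple factors, and elementary abelian $5$-groups are already handled by the $p$-local analysis of Section~\ref{pne3}, the plan is to classify the pairs $(S, n)$ with $S$ a nonabelian simple group of order divisible by~$5$ and $n \geq 2$ for which $S^n$ embeds in $G.S_3$, and for each to count $G.S_3$-conjugacy classes.

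Suppose $H = S_1 \times \cdots \times S_n \leq G.S_3$ and let $y \in S_1$ have order~$5$. The $5$-local analysis of Section~\ref{pne3} gives $C_G(y) \cong 5 \times A_8$, so each $S_j$ with $j \geq 2$ embeds as a simple nonabelian subgroup of $A_8$; by symmetry $S$ itself embeds in $A_8$. This forces $S \in \{A_5, A_6, A_7, A_8\}$, with the two $A_8$-classes of $A_5$ being the $5$-point and $6$-point embeddings. The cases $S = A_8$ and $S = A_7$ are ruled out by computing that the centralizer in $5 \times A_8$ of any such subgroup reduces to the central~$5$, leaving no room for a second commuting copy of $S$; a similar centralizer count rules out $n \geq 3$ in the remaining cases.

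For $S = A_6$, Section~\ref{cent5} showed that the $A_6$ arising from $S_4(2) \cong S_6 \leq O_{10}^-(2)$ has centralizer in $G$ equal to another~$S_6$, giving a unique class of $A_6 \times A_6$ with normalizer $(S_6 \wr 2) \times S_3$ contained in $(3 \times O_{10}^-(2)){:}2$. For $S = A_5$, the power-map computation of Section~\ref{cent5} assigns $G$-type $(2B, 3B)$ to the $5$-point $A_5$ and $(2B, 3A)$ to the $6$-point $A_5$. The commuting pairs then split according to the $G$-types of the two factors and the overgroup in which they lie: one class is forced into $N(3B) = (3 \times O_8^+(2){:}3){:}2$, and two further classes arise within $S_6 \wr 2 \leq O_{10}^-(2)$ from the two inequivalent ways of choosing a pair of $A_5$-subgroups one in each $S_6$-factor, up to the $G.S_3$-action. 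Together with the $A_6 \times A_6$ class this yields the four classes asserted.

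The main obstacle is completeness of the $A_5 \times A_5$ count: one must verify that no additional class has been missed, in particular that any commuting pair whose factors have different $G$-types is $G.S_3$-conjugate to one of those already listed (or fails to exist in $G$ at all). This step relies on the centralizer information for each type of $A_5$ tabulated in Section~\ref{cent5}, together with the fusion of $A_6$-subgroups in $G$ via $O_{10}^-(2)$, to check that every possible commuting second $A_5$ is accounted for in $N(3B)$ or in $S_6 \wr 2 \times S_3$.
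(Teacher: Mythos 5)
Your overall reduction is the paper's own route: every factor of a non-simple characteristically simple subgroup of order divisible by $5$ lies in $C_G(y)\cong 5\times A_8$ for some $y$ of order $5$, so the factor type is one of $A_5,A_6,A_7,A_8$, and the classes are then read off from the data of Section~\ref{cent5}. But the two steps that carry the real content do not hold up as written. First, ruling out $A_7$ and $A_8$ because their centralizers \emph{in} $5\times A_8$ reduce to the central $5$ is not a valid inference: the other factor $S_1$ contains $y$ but does not centralize it, so $S_1$ need not lie in $5\times A_8$ at all, and smallness of $C_{C_G(y)}(S_2)$ says nothing about $C_G(S_2)$. Your own list exposes the problem: the $5$-point $A_5$ has centralizer only $5\times 3$ inside $5\times A_8$, yet its full centralizer in $G$ is $3\times A_8$, which is exactly what produces the $A_5\times A_5$ class in $N(3B)$ that you assert. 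What is actually needed (and what the paper establishes in Section~\ref{cent5}) is the full $G$-centralizer of each relevant simple subgroup of the $A_8$: $C_G(A_7)=3\times A_5$ and $C_G(A_8)=3\times A_5$ (via, e.g., the observation that no parabolic contains $A_7\times A_5$), neither of which contains a second $A_7$ or $A_8$; the analogous exact computations $C_G(\hbox{$6$-point }A_5)=S_6$ (using maximality of $S_6$ in $A_8$) and $C_G(\hbox{$5$-point }A_5)=3\times A_8$ are what make the rest of the argument work.

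Second, the enumeration of exactly three classes of $A_5\times A_5$ is asserted rather than proved, and you flag the completeness check yourself as the main outstanding obstacle. The paper closes it precisely with the centralizer facts above: they force exactly two classes with $G$-conjugate factors, namely type $(2B,3B)^2$ with centralizer of order $3$ and normalizer inside $N(3B)$, and type $(2B,3A)^2$ with trivial centralizer and normalizer inside $(S_6\times S_6){:}2$, together with the one mixed class. Moreover your description of the two classes inside $S_6\wr 2\times S_3$ as ``the two inequivalent ways of choosing a pair of $A_5$-subgroups, one in each $S_6$-factor'' is off: up to the obvious symmetry there are three such type-combinations, and the third (both factors of $5$-point type) is exactly the $N(3B)$ class, whose normalizer is larger than anything inside $S_6\wr 2\times S_3$. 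So the class count and the normalizer containments cannot be obtained from overgroup bookkeeping alone; they require the explicit $G$-centralizers, which your proposal defers rather than supplies.
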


\section{Simple subgroups centralizing an element of order $3$}
\label{ccexistC3}
\subsection{Groups centralizing a $3G$ element}
\label{C3G}
We have $N(3G)=(3\times U_3(8){:}3){:}2$, and the only proper non-abelian
simple subgroup of $U_3(8)$ is a single class of $L_2(8)$.
Now a $7A$ element commutes with $L_3(4){:}S_3$, which has two classes
outer $3$-elements, fusing to $3D$ and $3E$ respectively.
Similarly, a $7B$ element commutes with $(L_2(7)\times3){:}2$
which also has two classes of outer $3$-elements, fusing to $3E$ and $3G$
respectively. In particular, the $7$-elements in $U_3(8)$ lie in $7B$,
so the $L_2(8)$ is of type $(2C,3C,7B)$.
Hence the $L_2(8)$ centralizes in $G.S_3$ a proper 
subgroup of $(L_2(7)\times 3){:}2$, containing $3^2{:}2$.
The normalizer therefore contains $L_2(8){:}3\times 3^2{:}2$, 
in which the centralizing $3^2$ contains one cyclic subgroup of 
type $3A$, one of type $3E$, and two of type $3G$.
Hence there is also an element of the normalizer interchanging
the two subgroups of type $3G$, so
the normalizer in $G.S_3$
contains $S_3\times S_3\times L_2(8){:}3$.
But $S_3\times S_3$ is maximal in $(L_2(7)\times 3){:}2$,
so this is the full normalizer.

\subsection{Groups centralizing a $3E$ element}
\label{C3E}
Next consider the case $3E$, where we have $N(3E)={}^3D_4(2){:}3\times S_3$.
Now ${}^3D_4(2)$ contains just three classes of proper non-abelian
simple subgroups, with normalizers $S_3\times L_2(8)$, $U_3(3){:}2$,
and $(7\times L_2(7)){:}2$. The $7$-elements in $L_2(7)$ and $U_3(3)$ are
in ${}^3D_4(2)$ class $7D$, so $G$-class $7A$, while those in $L_2(8)$
are in $7B$. (Warning: there are two classes of ${}^3D_4(2)$,
with opposite fusion of $7$-elements. The one under consideration here
contains $3A$-elements, so $7ABC$ fuses to $7B$ while $7D$ fuses to $7A$.
The other one contains $3B$-elements.
These facts are verified below using structure costants.)
The above group $L_2(8)$ therefore has type $(2C,3C,7B)$ and its normalizer
in $G.S_3$ is $S_3\times S_3\times L_2(8){:}3$. This is the
same as the one in $U_3(8)$.
The above group $L_2(7)$ has type $(2A,3A,7A)$ and centralizes $L_3(4)$.
Its full normalizer in $G.S_3$ is $(L_3(2)\times L_3(4){:}S_3){:}2$.
The centralizer of the $U_3(3)$ is a proper subgroup of $L_3(4){:}S_3$,
and is therefore $3^2{:}2S_4$, since we know that there is a
subgroup $3^2{:}2S_4 \times U_3(3){:}2$ in $G.S_3$.

\subsection{Groups centralizing a $3F$ element}
\label{C3F}
Next consider the case $3F$, where we have $N(3F)=S_3\times(3\times U_5(2)){:}2$.
Now $U_5(2)$ contains one class each of $U_4(2)$, $L_2(11)$ and $A_6$,
and three classes of $A_5$. This $L_2(11)$ has normalizer
$S_3\times(3\times L_2(11)){:}2$ in $G.S_3$. The other groups could have
larger normalizers in $G.S_3$ than they do in $N(3F)$.
In any case, the centralizer in $G$ of any of these
groups is a subgroup of $A_8$, containing the given $S_3$.
Note that the normal $3^2$ in $N(3F)$ contains one cyclic subgroup of
type $3A$, one of type $3F$, and two of type $3D$.

In the case of $U_4(2)$, we already see inside the group $S_3\times U_6(2){:}S_3$
a subgroup $S_3\times S_3\times (3\times U_4(2)){:}2$, which contains a
subgroup $S_3\times S_3$ of $A_8$. This $S_3\times S_3$ contains two cyclic subgroups
of type $3A$ and two of type $3B$, and an extra automorphism of order $2$ is
realised inside $N(3D)$. The only proper subgroup of $A_8$ containing
$3^2{:}D_8$ is $S_6$, but $U_4(2)$ does not centralize an element of order $5$.
Hence the full $U_4(2)$ normalizer in $G.S_3$ is
$3^2{:}D_8\times (3\times U_4(2)){:}2$. 
The normal $3^2$ is of type $3AABB$, and normal $3$ is of type $3D$.
The diagonal elements are of type $3D$ and $3F$, corresponding to the
$3B$ and $3A$ elements repectively. The $13$ subgroups of order $3^2$ are
one of type $3AABB$, two each of types $3ADFF$ and $3BDDD$, and four
each of types $3ADDF$ and $3BDFF$.

The same argument applied to $A_6$ allows the possibility that 
the centralizer of the $A_6$ grows to $S_6$.
Indeed, we can see this centralizer inside $O_{10}^-(2)$, so we have
$S_6\times S_6\times S_3$ as the normalizer of the $A_6$ in $G.S_3$.
Moreover, there is also an automorphism swapping the two factors of
$S_6$, also realised inside $(3\times O_{10}^-(2)){:}2$.
Of the two types of $A_5$ in $A_6$, one centralizes the full $A_8$,
while the other centralizes only $S_6$. Hence we obtain normalizers
$(3\times A_5\times A_8){:}2^2$ and $S_5\times S_6\times S_3$ respectively.

\subsection{Groups centralizing a $3D$ or $3B$ element}
\label{C3D}
The $3D$-normalizer $(3\times O_{10}^-(2)){:}2$ contains
a subgroup of index $3$ of the $3B$-normalizer, so in particular
contains all non-abelian simple subgroups of $N(3B)$.

The group $O_{10}^-(2)$ contains a large number of classes of simple
subgroups. Consider first those with order divisible by $17$, that is
$S_8(2)$, $O_8^-(2)$, $S_4(4)$, $L_2(16)$ and $L_2(17)$. There are two
classes of $L_2(16)$ in $O_{10}^-(2)$, and one class of each of the
other groups. Since the Sylow $17$-normalizer lies inside $N(3D)$,
the normalizers are all easy to write down, and they are 
$S_8(2)\times S_3$, $O_8^-(2){:}2\times S_3$, $S_4(4){:}2\times S_3$,
$L_2(17)\times S_3$, and two classes of $L_2(16){:}2\times S_3$.

Next consider the simple subgroups with order divisible by $11$, that is
$A_{12}$, $A_{11}$, $M_{12}$, $M_{11}$, $U_5(2)$ and $L_2(11)$.
There is a single class each of $A_{12}$, $A_{11}$ and $U_5(2)$,
with normalizers in $N(3D)$ of shape $(3\times A_{12}){:}2$,
$(3\times A_{11}){:}2$ and $(3^2\times U_5(2)){:}2$. The last of these
has a larger normalizer in $G.S_3$, as already discussed in Subsection~\ref{C3F}.
Since $A_{11}$ does not lie in $U_6(2)$, it does not centralize an
involution, so the above groups are the full
normalizers of $A_{11}$ and $A_{12}$ in $G.S_3$.

Now $O_{10}^-(2)$ contains two classes of $M_{12}$, fused by the
outer automorphism. Hence there is one such class of $M_{12}$ in
$G.S_3$, each with normalizer $3\times M_{12}$, and splitting into
two classes in $G$. In the case of $M_{11}$, there are two classes
fixing one of the $12$ points on which $A_{12}$ acts, and two
acting transitively, each pair fused by the outer automorphism.
Hence there are two classes of $M_{11}$ in $G.S_3$, each with
normalizer $3\times M_{11}$, and splitting into four classes in $G$.
The subgroups $L_2(11)$ are again of two types: transitive and
intransitive on $12$ points. The former also lies in $U_5(2)$, so has
normalizer $S_3\times (3\times L_2(11)){:}2$ as discussed 
in Subsection~\ref{C3F} above.
The latter has normalizer $(3\times L_2(11)){:}2$ contained in
$N(3D)$. (We need to prove that it does not also centralize
a $2A$-element: a somewhat subtle question. In fact, there is a unique
class of $L_2(11)$ in $U_6(2)$, and such an $L_2(11)$ acts on the
$2^{20}$ factor of $2^{1+20}$ as two copies of an absolutely irreducible
$10$-dimensional module. The $1$-cohomology of this module is trivial,
so there is only one conjugacy class of $L_2(11)$ in $2^{1+20}{:}L_2(11)$.)

Next consider subgroups with order divisible by $7$ but not
by $11$ or $17$. This includes $A_{10}$, $A_9$, $A_8$, $A_7$,
$L_2(7)$, $L_2(8)$, $O_8^+(2)$, $S_6(2)$, $U_3(3)$. The $7$-elements
here are in class $7A$, since they centralize elements of order $5$.
There is a unique class of $O_8^+(2)$ in $O_{10}^-(2)$, and such a group
centralizes a unique $3B$ element in $G$. The normalizer is
therefore equal to $N(3B)$.
 Since this normalizer includes a triality automorphism of $O_8^+(2)$,
we can use this automorphism to prove conjugacy in $G.S_3$ of various
groups. In particular there is a unique class of $S_6(2)$ that
centralizes a $3D$ element.

Since the $7A$ element centralizes $L_3(4){:}S_3$, in which the
$3D$ element we are centralizing lies in class $3B$, it makes sense
to classify the subgroups of $L_3(4){:}S_3$ that contain a $3B$-element.
Now in $L_3(4){:}3$, the maximal subgroups containing an outer
element of order $3$, are $2^4{:}(3\times A_5)$ (two classes),
$7{:}3\times 3$ and $3^2{:}2A_4$. There are two classes of outer $3$-elements
in $3^2{:}2A_4$ (up to inversion), of which those that centralize involutions
are in $L_3(4)$-class $3B$. All outer elements of order $3$ in
$7{:}3\times 3$ lie in $L_3(4)$ class $3C$. Notice also that the
$7$-centralizer in $(3\times O_{10}^-(2)){:}2$ is $7 \times (3\times A_5){:}2$.

There is a unique class of $A_{10}$ in $N(3D)$, with normalizer $S_3\times A_{10}$.
Since the centralizer of a $5$-cycle therein is $5^2\times S_3$, this centralizer
cannot grow in $G.S_3$. There are two classes of $A_9$ in $N(3D)$, but these
are conjugate by a triality automorphism of $O_8^+(2)$. The normalizer is
$(3^2{:}2\times A_9){:}2$ contained in $(3^2{:}2\times O_8^+(2){:}3){:}2$. 
The precise structure of this normalizer can be seen from the subgroup
$(A_5\times A_{12}){:}2$ of the Monster, and is the unique subgroup of
index $2$ in $S_3\times S_3\times S_9$ that is not a direct product.

Similarly for the case of $S_6(2)$, there are two classes in $N(3D)$,
fused by triality. Since $S_6(2)$ contains elements of order $15$, the centralizer
in $G$ lies somewhere between $S_3$ and $A_5$. But it is not $A_5$, and $S_3$
is maximal in $A_5$, so it is $S_3$. Hence the normalizer in $G.S_3$ is
$S_3\times S_3\times S_6(2)$.

This leaves $A_7$, $A_8$, $L_2(7)$, $L_2(8)$, $U_3(3)$, which are significantly harder.
We shall not attempt a complete classification at this stage.
Note however that there is a maximal subgroup $L_2(7){:}2$ of
$O_8^-(2){:}2$. This $L_2(7)$ is of type $(2C,3C,7A)$ in $G$ and does not
centralize any involution in $G$. Hence its normalizer in $G.S_3$ is
exactly $L_2(7){:}2\times S_3$.

Finally, we consider the simple groups divisible by no prime greater than $5$,
that is $A_5$, $A_6$, and $U_4(2)$. There are two classes of $U_4(2)$: those that
act on the $10$-space as $O_6^-(2)$, and those that act as $U_4(2)$. But by triality
of $O_8^+(2)$, these are conjugate in $G.S_3$. Moreover, they are conjugate to
the subgroup $U_4(2)$ of $U_5(2)$ already considered above.

We leave $A_5$ and $A_6$ for the time being.

\subsection{Groups centralizing a $3A$ element}
\label{C3A}
The maximal subgroup $N(3A)=S_3\times U_6(2){:}S_3$ contains many interesting subgroups.
As it contains $7B$-elements, many of these are not 
visible elsewhere.
In particular $U_6(2)$ contains $U_4(3)$. The normalizer in $G.S_3$ is
$S_3 \times U_4(3).2^2$, and the class splits into three classes in $G$.
It also contains $M_{22}$, with normalizer $S_3\times M_{22}{:}2$,
and splitting into three classes in $G$. It also contains
$S_6(2)$, with normalizer $S_3\times 2\times S_6(2)$, splitting into
three classes in $G$, and similarly $U_3(3)$, with normalizer
$S_3\times 2 \times U_3(3){:}2$, also splitting into three classes in $G$.

In addition to these four groups, there are unique classes in $U_6(2){:}S_3$ of each of
$A_8$, $L_2(11)$, $U_5(2)$ and $L_2(8)$, all of which we have
already seen. There are two classes of $U_4(2)$, one acting as $U_4(2)$,
the other as $O_6^-(2)$. The former we have already seen;
the latter group centralizes only $S_3\times 2$ in $G.S_3$,
and the class splits into three classes in $G$.
There are two classes of $A_7$ in $U_6(2){:}S_3$, one inside 
$S_6(2)$, the other inside $U_4(3)$. 
In both cases the normalizer is $S_3\times S_7$. The first contains both
$3A$ and $3C$ elements, the other only $3C$ elements.
There are also subgroups $A_5$, $A_6$,
$L_3(2)$, $L_3(4)$, which we shall not completely classify here.

\section{Simple subgroups centralizing an involution}
\label{ccexistC2}
\subsection{Groups centralizing a $2D$ element}
\label{C2D}
The centralizer of a $2D$-element in $G.S_3$ is $2\times F_4(2)$.
Now $F_4(2)$ has an outer automorphism of order $2$ that is \emph{not}
realised in $G.S_3$. Hence there are pairs of automorphic subgroups of
$F_4(2)$ that behave completely differently in $G$.
This applies in particular to the subgroups $S_8(2)$, $O_8^+(2)$,
and ${}^3D_4(2)$, but also to their subgroups $O_8^-(2)$, $A_{10}$, $A_9$, 
$L_2(17)$, and perhaps others.
In each case, one of the subgroups is centralized by a $3D$ or $3E$ element and the
other is not. Hence we obtain another class of each of these six
groups in $G.S_3$, and in each case the class splits into three classes in $G$.
Note however that this does not apply to the subgroups $S_4(4)$ and $L_2(16)$,
which are normalized by the outer automorphism of $F_4(2)$.

There are four further isomorphism types of simple subgroups 
of $F_4(2)$ with order divisible by $13$,
namely ${}^2F_4(2)'$, $L_2(25)$, $L_4(3)$ and $L_3(3)$.
It is easy to see that there is a unique class in each case,
except possibly $L_3(3)$, which is a subgroup of both ${}^2F_4(2)$
and $L_4(3)$. But both these copies of $L_3(3)$
lie in the centralizer of an outer automorphism
of $F_4(2)$, so they are indeed conjugate. Since the $13$-element
is self-centralizing in $G$, it is easy to write down the normalizers
in each case. They are $2\times {}^2F_4(2)$, $2\times L_2(25)\udot 2$,
$2\times L_4(3){:}2$, and $2\times L_3(3){:}2$.

The remaining simple subgroups with order divisible by $7$ are
$A_8$, $A_7$, $L_2(8)$, $L_2(7)$, $S_6(2)$, $U_3(3)$.
Finally there are the simple groups with order divisible by no prime
bigger than $5$, namely $A_5$, $A_6$, $U_4(2)$.
We make no attempt at a complete classification in these cases.

\subsection{Groups centralizing an inner involution}
\label{C2A}
The centralizer in $G.S_3$ of a $2C$ involution is soluble.
The centralizer of a $2B$ involution in $G.S_3$ has shape
$2^8.2^{16}{:}(S_6(2)\times S_3)$, and contains $7A$ elements.
As subgroups containing $7A$ elements turn out to be relatively
easy to classify, we shall not need to consider this case much further here.

The centralizer of a $2A$ involution is $2^{1+20}{:}U_6(2){:}S_3$, and
contains $7B$ elements.
Now in $2^{1+20}{:}U_6(2)$ we see a subgroup $2^{1+20}{:}L_2(8){:}3$, and the
representation of $L_2(8)$ on $2^{20}$ is the direct sum of the Steinberg module
and two copies of the natural module. Since the latter has $1$-dimensional
$1$-cohomology over the field of order $8$, there are in total $64$ conjugacy
classes of $L_2(8)$ in $2^{1+20}{:}L_2(8)$. One of these has normalizer
$S_3\times L_2(8){:}3$, and three more have normalizer $2\times L_2(8){:}3$.
The remainder are fused into $20$ classes of groups all with normalizer
$2\times L_2(8)$. Together these $24$ classes of $L_2(8)$ account for the
full structure constant of $1/6+3(1/2)+20(3/2)=95/3$. Under the action of the
outer automorphism group $S_3$, the three with normalizer $2\times L_2(8){:}3$
are fused into one class, while the last $20$ appear to be fused into
orbits of sizes $3+3+2+6+6$, with normalizers either $2^2\times L_2(8)$
or $2\times L_2(8){:}3$ or $2\times L_2(8)$.

\section{Simple subgroups of $Fi_{22}$}
\label{F22sub}
In $G.S_3$ there is a class of subgroups $Fi_{22}$, each with normalizer
$Fi_{22}{:}2$, and splitting into three classes in $G$.
In $Fi_{22}{:}2$ there is a unique class of subgroups $O_7(3)$.
Since $O_7(3)$ is not a subgroup of $F_4(2)$ or of any of the
$3$-element centralizers, it follows
that the normalizer in $G.S_3$ is $O_7(3){:}2$, and that there are
three classes of $O_7(3)$ in $G$.

In $O_7(3){:}2$ there is a unique class of $G_2(3)$, and the normalizer
in $G.S_3$ of any such $G_2(3)$ is therefore $G_2(3){:}2$, contained
in $Fi_{22}{:}2$. In $G_2(3)$ there is a unique class of $L_2(13)$,
and again the normalizer in $G.S_3$ of such an $L_2(13)$ is
$L_2(13){:}2$, contained in $Fi_{22}{:}2$.

\section{Eliminating other isomorphism types of simple subgroups}
\label{isolist}
First note that the largest order of an element of $G$ is $35$.
Since $L_2(q)$ has an element of order $(q+1)/2$ we can disregard $L_2(q)$ whenever
$q>70$. Similarly, $L_3(q)$ has an element of $(q^2+q+1)/3$ and $U_3(q)$ has an
element of order $(q^2-q+1)/3$ so we can disregard $L_3(q)$ and $U_3(q)$,
and also $G_2(q)$, when $q>10$. Also,
$L_4(q)$, $U_4(q)$ and $S_4(q)$ contains elements of order $(q^2+1)/2$, so can be
disregarded when $q>8$.
All remaining simple groups of small enough order are explicitly given in the Atlas list
\cite[pp. 239--242]{Atlas},
and, using Lagrange's Theorem and CFSG, we obtain
 in addition to the known subgroups above, just $16$ more possibilities,
as follows:
\begin{itemize}
\item $A_{13}$, $A_{14}$, $L_2(19)$, $L_2(27)$, $L_2(49)$, $L_2(64)$, $L_3(9)$, $L_4(4)$,
\item $S_4(8)$, $S_6(3)$, $G_2(4)$, $Sz(8)$, $J_1$, $J_2$, $J_3$, $Suz$.
\end{itemize}
We eliminate these as follows:
\begin{itemize}
\item $L_2(49)$ has elements of order $25$.
\item $L_2(64)$ has elements of order $65$, and is contained in $S_4(8)$.
\item $L_3(9)$ has elements of order $91$.
\item $L_4(4)$ has elements of order $85$.
\item $S_6(3)$ has elements of order $36$.
\item $J_1$ contains $19{:}6$, whereas the Sylow $19$-normalizer in $G$ is $19{:}9$.
\item $J_2$ contains a triple cover $3\udot A_6$, but the centralizers of elements of 
order $3$ in $G$ are either soluble, in the case $3C$, or direct products
$3\times U_6(2)$ or $3\times O_8^+(2){:}3$ in the cases $3A$ and $3B$.
This argument also eliminates $G_2(4)$ and $Suz$, which contain $J_2$.
\item $A_{13}$ can be generated by taking a subgroup $A_5\times A_8$,
restricting to $A_5\times A_5\times 3$, and extending to another $A_5\times A_8$
normalizing the other $A_5$ factor. But the $5$-centralizer is $5\times A_8$, and is
contained in $O_{10}^-(2)$, so the entire construction takes place inside $O_{10}^-(2)$,
and therefore cannot generate $A_{13}$. This argument eliminates also $A_{14}$.
\item The elements of order $7$ and $13$ in $G$ are rational. The elements of order $5$
have fixed space of dimension $7$ in the action of 
the triple cover $3\udot G$ on the $27$-dimensional module over $\mathbb F_4$.
Using the Brauer character table for $Sz(8)$ in characteristic $2$, 
in \cite{ABC}, it is easy to check that 
these properties cannot be achieved in any
restriction of the Brauer character to $Sz(8)$.
\item $L_2(19)$ contains elements of order $10$, so the involutions are in class $2A$
or $2B$; and elements of order $9$, so the elements of order $3$ are in $3C$. 
But the structure constants of type $(2A,3C,5A)$ and $(2B,3C,5A)$ are zero, so 
there is no $A_5$ of this type in $G$. Since $L_2(19)$ is a subgroup of $J_3$,
this argument eliminates also $J_3$.
\item $L_2(27)$ contains elements of order $14$, so either $2A$- and $7B$-elements,
or $2B$- and $7A$-elements. All such $(2,3,7)$ structure constants are zero except for
$(2B,3A,7A)$, where the value is $1/480$. This implies every $L_2(27)$ has
non-trivial centralizer, which is impossible.
\end{itemize}

\section{Subgroups isomorphic to $A_5$}
\label{A5}
There is a well-known character formula for the number of ways a given element
in a specified conjugacy class $C_3$ is the product of an element $x$ from class $C_1$ and
an element $y$ from class $C_2$. Dividing this by the order of the centralizer of an element
of class $C_3$ gives the symmetrized structure constant 
$$\xi(C_1,C_2,C_3)=\frac{|C_1|.|C_2|.|C_3|}{|G|^2}\sum_{\chi\in Irr(G)} \frac{\chi(x)\chi(y)\chi(y^{-1}x^{-1})}{\chi(1)},$$
which can also be interpreted as the sum over conjugacy classes of such triples $(x,y,xy)$
of the inverse of the order of the centralizer of $(x,y)$.
Putting this together with the fact that
$$\langle x,y,z\mid x^2=y^3=z^5=1,xy=z\rangle$$
is a presentation for $A_5$, leads to the well-known method of
determining $A_5$ subgroups using structure constants for triples of type $(2,3,5)$.

We calculate the structure constants using GAP \cite{GAP}. It turns out that
just $5$ of the $9$ structure constants of type $(2,3,5)$ in $G$ are non-zero,
as follows:
\begin{itemize}
\item $\xi(2B,3A,5A)=1/720$.
\item $\xi(2B,3B,5A)=59/1440=1/20160+1/224+1/64+1/48$.
\item $\xi(2C,3A,5A)=1/48$.
\item $\xi(2C,3B,5A)=9/32=1/32+1/16+3/16$.
\item $\xi(2C,3C,5A)=1$
\end{itemize}
In particular, every $A_5$ centralizes an inner or outer automorphism,
and hence its normalizer lies in one of the known maximal subgroups.

A classification of the subgroups isomorphic to $A_5$
up to conjugacy is more difficult. We 
use knowledge of the subgroups isomorphic to $A_5$ in the Monster \cite{Anatomy1}.
Now the elements of classes $3A,3B,3C,5A$ lift to Monster classes
$3A,3A,3B,5A$ respectively, so the Monster $A_5$s that live in $2^2.G.S_3$
are as follows:
\begin{itemize}
\item Monster type $2A,3A,5A$, normalizer $(A_5\times A_{12}){:}2$.
\item Monster type $2B,3A,5A$, normalizer $(A_5\times 2M_{22}{:}2).2$.
\item Monster type $2B,3B,5A$, normalizer $S_6.2\times M_{11}$. 
\end{itemize}
Since there is a unique class of $2^2$ in $M_{11}$, and the normalizer is $S_4$,
we obtain a single class of $A_5$ in $G$ of the third type, and the
normalizer in $G.S_3$ is $S_3\times S_5$. The outer automorphism of order $3$ 
centralizing this $A_5$ cannot be in $3E,3F,3G$, so must lie in $3D$.

Turning now to $A_{12}$, the involutions of Monster class $2A$ are the ones
of cycle types $2^6$ and $2^2,1^8$. Hence the $2A^2$ subgroups are of the 
following types:
\begin{itemize}
\item Moving $4$ points: $(12)(34)$, $(13)(24)$.
\item Moving $6$ points: $(12)(34)$, $(12)(56)$.
\item $(12)(34)(56)(78)(9X)(ET)$, $(13)(24)$.
\item $(12)(34)(56)(78)(9X)(ET)$, $(13)(24)(57)(68)(9E)(XT)$.
\end{itemize}
In the third case, there is only an $S_2$ of automorphisms, not $S_3$.
The $2^2$ centralizer in $S_{12}$ is $2^2\times 2^4{:}S_4$, so there is a
unique type of such $A_5$ in $G.S_3$, with normalizer
$(A_5\times 2^3{:}S_4.2){:}2$, and splitting into three classes in $G$.
The attributable structure constant is $6/2^7.3=1/64$.
In the first case, the $A_5$ centralizes $A_8$, and this is a case
we know about, contributing $1/20160$ to the structure constant, and having type $(2B,3B)$.
In the second case, the $A_5$ centralizes $S_6$, and this is another
case we know about, contributing $1/720$ to the structure constant,
and having type $(2B,3A)$. In the last case, 
the normalizer of the $2^2$ in $S_{12}$ is $2^6{:}(S_3\times S_3)$,
so giving an $A_5$ in $G.S_3$ with normalizer $(A_5\times 2^4{:}3^2{:}2){:}2$,
contributing $6/2^5.3^2=1/48$ to the structure constant.

Finally we need to consider the case $2.M_{22}.2$, which is rather more
intricate. This group has $6$ classes of involutions, which in
Atlas notation are $-1A$, $+2A$, $-2A$, $+2B$, $-2B$ and $2C$.
These fuse into $2.HS.2$ classes $-1A$, $-2A$, $+2A$, $2C$, $2C$, 
and $2D$ respectively. Hence they fuse in $HN$,
and also in the Monster, to $2A,2B,2A,2A,2A,2B$
respectively.
We therefore need to classify $2^2$ subgroups consisting of elements
of classes $-1A$, $-2A$, and $\pm 2B$ only.
We obtain the following cases:
\begin{itemize}
\item $-1A,+2B,-2B$, with centralizer $2^2\times 2^3{:}L_3(2)$,
and normalized only by an involution that also normalizes the $A_5$.
\item two of type $-2A,-2A,-2A$ with normalizers in $M_{22}$ of
shape $2^4.3^2.2$ and $2^4.S_3$ respectively.
\item one each of types $-2A,\pm 2B,\pm 2B$.
\end{itemize}
The first case gives an $A_5$ with normalizer 
$(2^3{:}L_3(2)\times A_5){:}2$, contributing $6/1344=1/224$
to the structure constant, and being of type $2B,3B$.

The remaining contributions to the structure constants
are as follows.
The case given above as $2^4.3^2.2$ in $M_{22}$
becomes $2.2^4.S_3.S_3$ in $2.M_{22}.2$, and factoring out
the normal $2^2$ as well as taking the $S_3$ off the top
appears to leave us with a structure constant of $1/48$.
The other case of this type differs by a factor of $3$,
so gives $1/16$.

In the remaining cases we have types $(-2A,+2B,+2B)$ and
$(-2A,-2B,-2B)$ which are swapped by an element
normalizing the $A_5$: hence these cases do not extend to
$S_5$ in $G.S_3$. There is also the type $(-2A,+2B,-2B)$
which does extend to $S_5$ in $G.S_3$.
The centralizer of the $2^2$ in the first two cases would seem to be
$2^2\times 2^3S_4$ in $2.M_{22}.2$, but there is also an element
swapping the two elements of type $2B$ in the $4$-group,
hence giving a total contribution
to the structure constant of $1/32$. In the other case
the centralizer in $2\udot M_{22}.2$ is smaller by a factor of $6$, leaving us
with $3/16$.

This accounts exactly for the structure constants, and also
gives all the class fusions, except that there is an
ambiguity as to which of the two contributions of $1/48$
to attribute to $\xi(2B,3B,5A)$ and which to $\xi(2C,3A,5A)$.
To summarize:
\begin{theorem}
There are ten classes of $A_5$ in $G.S_3$, becoming $18$ classes in $G$.
The normalizers in $G.S_3$ are as follows.
\begin{itemize}
\item $S_3\times S_5$, type $(2C,3C)$, contributing $1$ to the structure constant;
\item $(A_5\times(A_8\times 3){:}2){:}2$, type $(2B,3B)$, contributing $1/20160$;
\item $S_5\times S_6\times S_3$, type $(2B,3A)$, contributing $1/720$;
\item $(A_5\times 2^4{:}3^2{:}2){:}2$, type $2C,3A)$, contributing $1/48$;
\item $(A_5\times [2^7.3]).2$, type $(2B,3B)$, splitting into $3$ classes in $G$,
contributing $1/64$;
\item $S_5\times 2^3{:}L_3(2)$, type $(2B,3B)$, splitting into $3$ classes in $G$,
contributing $1/224$;
\item $(A_5\times [2^5.3^2]).2$, type $(2B,3B)$, contributing $1/48$;
\item $(A_5\times 2^3S_4).2$, type $(2C,3B)$, contributing $1/32$;
\item $A_5 \times [2^6.3]$, type $(2C,3B)$, splitting into $3$ classes in $G$,
contributing $1/16$;
\item $(A_5\times [2^4]).2$, type $(2C,3B)$, splitting into $3$ classes in $G$,
contributing $3/16$.
\end{itemize}
\end{theorem}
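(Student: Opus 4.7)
The plan is to combine the GAP-computed $(2,3,5)$ structure constants with the Monster embedding $2^2\udot G\udot S_3 < \MM$, which together pin down both a lower and an upper bound on the set of $A_5$ subgroups. From the list of five nonzero structure constants $\xi(2B,3A,5A)=1/720$, $\xi(2B,3B,5A)=59/1440$, $\xi(2C,3A,5A)=1/48$, $\xi(2C,3B,5A)=9/32$, $\xi(2C,3C,5A)=1$, we read off the contribution $1/|C_G(A_5)|$ that any candidate $A_5$ subgroup must make, and we must account for each sum exactly. The Monster embedding tells us that, up to $\MM$-conjugacy, every such $A_5$ lies in one of three Monster normalizers: $(A_5\times A_{12}){:}2$ of Monster type $(2A,3A,5A)$, $(A_5\times 2\udot M_{22}{:}2).2$ of type $(2B,3A,5A)$, or $S_6\udot 2\times M_{11}$ of type $(2B,3B,5A)$. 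Each $G.S_3$-class of $A_5$ corresponds to a conjugacy class of $2^2$-subgroups of the centralizing factor (the $A_5$ is recovered from the choice of $2^2$ via the $(A_5\times A_5)$ normalizer structure in $\MM$).

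The next step is to enumerate, for each of the three Monster $A_5$-centralizers, the $2^2$-subgroups whose involutions lie in the appropriate inverse image under restriction from $\MM$ down to $2^2\udot G\udot S_3$. For $M_{11}$ there is one class of $2^2$, with normalizer $S_4$, yielding the single $A_5$ of type $(2C,3C)$ with contribution $1$ — which already exhausts $\xi(2C,3C,5A)$. For $A_{12}$, the Monster-$2A$ involutions are those of cycle types $2^2 1^8$ and $2^6$; the four possible $2^2$'s are classified by their joint orbit structure on $\{1,\dots,12\}$, giving the $A_5$'s centralizing $A_8$, $S_6$, the intermediate $2^3{:}S_4$, and the $2^4{:}3^2{:}2$, with their contributions computed directly from the $S_{12}$-centralizer orders. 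For $2\udot M_{22}\udot 2$, first identify which of the six involution classes lift to Monster $2A$ (namely $-1A$, $-2A$, $\pm 2B$); then list all $2^2$'s supported on those classes, compute the normalizer in $2\udot M_{22}\udot 2$, and translate to a normalizer in $G.S_3$.

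For each candidate $A_5$, compute $N_{G.S_3}(A_5)$ from the product of the $A_5$-factor automorphisms realised and the centralizer read off above; then compute the contribution $[N_{G.S_3}(A_5):A_5\udot\mathrm{Aut}_{G.S_3}(A_5)]^{-1}\cdot |C_G(A_5)|^{-1}$ and compare against the appropriate $\xi$-value. The classification is established by observing that the running totals match each of the five nonzero $\xi$-values exactly — this is the sanity check that no class has been missed, and is the essential computational content of the proof. The splittings into $G$-classes then follow from the action of the outer $S_3$ on centralizers: precisely those $A_5$'s whose centralizer in $G.S_3$ projects nontrivially onto the outer $S_3$ quotient remain a single class in $G$; the remainder split into three $G$-classes.

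The main obstacle will be the $2\udot M_{22}\udot 2$ analysis. With six involution classes and intricate fusion, the $2^2$-subgroups supported on $\{-1A,-2A,\pm 2B\}$ form several families, and one must carefully track which diagonal element lies in which class, which $2^2$'s are swapped by an element of $N(A_5)$ (thereby extending $A_5$ to $S_5$ in $G.S_3$) and which are not; the two cases $(-2A,+2B,+2B)$ and $(-2A,-2B,-2B)$ collapse while $(-2A,+2B,-2B)$ does not, and miscounting here would throw off the final tally. A related subtlety is the residual ambiguity between the two contributions of $1/48$ to $\xi(2B,3B,5A)$ and $\xi(2C,3A,5A)$: resolving this requires identifying the $G$-class of the outer involution normalizing the relevant $A_5$, which can be done by restricting the $78$-dimensional or $1938$-dimensional character to $A_5\udot 2$ and comparing with the known Brauer characters of $A_5\udot 2$.
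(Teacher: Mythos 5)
Your proposal follows essentially the same route as the paper: compute the five nonzero $(2,3,5)$ structure constants, invoke Norton's list of Monster $A_5$'s with $5A$-elements to reduce to classifying $2A$-pure $2^2$-subgroups in $(A_5\times A_{12}){:}2$, $(A_5\times 2\udot M_{22}{:}2).2$ and $S_6.2\times M_{11}$, read off normalizers and contributions, and check the totals exhaust each structure constant. The only divergence is minor: you propose resolving the $1/48$ attribution ambiguity by character restriction, whereas the paper simply leaves that ambiguity noted, since it does not affect the list of normalizers.
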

\section{Hurwitz groups}
\label{Hurwitz}
A Hurwitz group is a group generated by an element $x$ of order $2$ and an element
$y$ of order $3$ such that $xy$ has order $7$. Among the simple groups we are
interested in here, $L_2(7)$, $L_2(8)$, $L_2(13)$, ${}^3D_4(2)$ and $Fi_{22}$ are
Hurwitz groups. Another group which turns up is the non-split extension
$2^3\udot L_3(2)$. To aid in identification of the group generated by $x$ and $y$,
we compute a \emph{fingerprint}, consisting of the orders of the elements
$$s,st,sst,ssts,sstst,ssstst,ssststt$$
where $s=xy$ and $t=xyy$. The last of these words is the only one that does not
necessarily have the same order as its reverse, so we shall often augment the
fingerprint with the order of $sssttst$ in order to distinguish a pair 	$(x,y)$
from its reciprocal $(x^{-1},y^{-1})=(x,yy)$. Note that this fingerprint is
significantly redundant when $s$ has order $7$, but is more discriminating
for larger orders.

We pre-compute the fingerprints for the $(2,3,7)$ generators of all the above groups.
In the list below we adopt an obvious shorthand for the
four reciprocal pairs of Hurwitz generators for $Fi_{22}$. All generating pairs not so marked are 
either self-reciprocal, or automorphic to their reciprocals.
The generating pairs in $Fi_{22}$ are of type $(2C,3D)$ while those in ${}^3D_4(2)$
are of type $(2B,3B,7D)$.
$$\begin{array}{lrrrrrrr}
L_2(7):&7&4&4&7&4&3&3\cr
L_2(8):&7&9&9&7&7&9&7\cr
{}^3D_4(2):&7&14&14&7&21&13&18\cr
L_2(13):&7&6&6&7&7&7&13\cr
&7&7&7&7&6&6&13\cr
&7&13&13&7&3&7&7\cr
Fi_{22}:&7&13&13&7&21&30&8/12\cr
&7&18&18&7&12&24&12/12\cr
&7&20&20&7&15&8&9/13\cr
&7&21&21&7&12&21&30\cr
&7&24&24&7&30&15&13/20
\end{array}$$

In order to classify the Hurwitz subgroups of $G$, we relate them to
Hurwitz subgroups of the Monster, as follows.
First, note that
all involutions in $G$ lift to involutions in $2^2.G$, which is a subgroup of the Monster,
and all elements of order $7$ in $G$ fuse to class $7A$ in the Monster. 
Moreover, the classes $3A$, $3B$, $3C$ in $G$ fuse to $3A$, $3A$, $3B$
respectively in the Monster. It is therefore possible
to use Norton's analysis \cite{Anatomy1}
of the corresponding structure constants in the Monster, to
write down a complete list of Hurwitz subgroups of $G$. 
Of help also is the corresponding analysis in the Baby Monster \cite{genusB}.

First we compute the structure constants of type $(2,3,7)$ in $G$.
Exactly $7$ of the $18$ structure constants are non-zero, as follows:
$$\begin{array}{ll}
\xi(2A,3A,7A)=1/20160 \qquad& \cr
\xi(2B,3A,7A)=1/480 \qquad &\xi(2B,3B,7B)=15/56\cr
\xi(2C,3A,7A)=3/64 \qquad &\xi(2C,3B,7B)=43/8\cr
\xi(2C,3C,7A)=11/3 \qquad &\xi(2C,3C,7B)=329/3
\end{array}$$

\subsection{The $2A,3A,7A$ case}
We have already discussed the $L_3(2)$ of type $(2A,3A,7A)$, which has 
centralizer $L_3(4)$, and embeds in $A_5\times A_8$ permuting the $8$ points
transitively. 
In particular, since the noralizer is $(L_3(2)\times L_3(4)){:}2$, 
this accounts for the full structure constant.
\subsection{The $2B,3A,7A$ case}
The group $A_8$ also contains two classes of $L_3(2)$ acting on
$7$ points, and therefore of type $(2B,3A,7A)$. The visible normalizer is just
$A_5\times L_3(2)$, but the structure constant is $\xi(2B,3A,7A)=1/480$, so this cannot be the
whole normalizer. Since the centralizer lies inside $L_3(4)$, 
the only possibility is $2^4A_5\times L_3(2)$, which exactly accounts
for the structure constant. In particular, this $L_3(2)$ does not extend to
$L_3(2){:}2$ inside $G$.
\subsection{The $2C,3C,7A$ case}
To account for $\xi(2C,3C,7A)=11/3$, note that $1$ is attributable to ${}^3D_4(2)$, and $1$
to $L_3(2)$, and $3/2$ to an $L_2(8)$ with normalizer $2\times L_2(8)$, and $1/6$ to an $L_2(8)$
with normalizer $S_3\times L_2(8){:}3$.
\subsection{The $2C,3C,7B$ case}
To account for $\xi(2C,3C,7B)=329/3$, note that $54$ is attributable to $Fi_{22}$,
since there are three classes of $Fi_{22}$ in $G$, each with two
(automorphic) classes of generating triples of each of $9$ types. A further $3$
is attributable
to three more classes of ${}^3D_4(2)$, and $18$ to three classes of $L_2(13)$,
and $3$ to three classes of $L_3(2)$;
the remainder $95/3=1/6+21(3/2)$ is attributable to the seven classes of $L_2(8)$ in $G.S_3$
identified in Sections~\ref{C3A} and \ref{C2A}.

\subsection{The $2B,3B,7B$ case}
There are three classes of $L_3(2)$ with normalizer $(L_3(2)\times L_3(2)){:}2$,
accounting for an amount $3/168=1/56$ of the structure constant, leaving
an amount $14/56=1/4$.

\subsection{The $2C,3A,7A$ case}
Since the structure constant is $3/64$, every such group is centralized
by an involution, necessarily of class $2B$.

\subsection{The $2C,3B,7B$ case}
In this case an explicit computer search found $36$ such triples
of elements in $G$, of which $6$ generate $L_3(2)$ and
$30$ generate $2^3\udot L_3(2)$.
Now the latter group has four classes of $(2,3,7)$ generating triples.

It would seem likely therefore that the structure constant of $5\frac38$
should be attributed as $4$ for $2^3\udot L_3(2)$ and $1\frac38$
for at least two classes of $L_3(2)$ in $G.S_3$. In any case, since the
total structure constant is less than $6$, it follows that any $L_3(2)$
of this type has non-trivial centralizer in $G.S_3$, and hence its 
normalizer cannot be maximal.

\subsection{Conclusion}
In this section we have proved the following theorem.
Apart from the calculation of structure constants, the proof is
mostly computer-free. However, we used
computational methods to check the results, by collecting
large numbers of $(2,3,7)$ triples and using statistical analysis
of frequencies to show that the above allocation of
structure constants to isomorphism types of groups is the only
plausible one.
\begin{theorem}
\begin{itemize} \item In $G.S_3$ there is a unique class of each of 
$L_2(13)$ and $Fi_{22}$, two classes of ${}^3D_4(2)$, and nine classes
of $L_2(8)$, as listed in Tables~\ref{simpleslarge} and \ref{simples7}.
The ones whose normalizers are maximal are just $Fi_{22}$ (in $G$ and $G.2$)
and one class of ${}^3D_4(2)$ (in $G.3$ and $G.S_3$).
\item  Every $L_3(2)$ in $G$ has non-trivial centralizer in $G.S_3$.
Only one class of $L_3(2)$ has maximal normalizer (in all of $G$, $G.2$,
$G.3$ and $G.S_3$).
\end{itemize}
\end{theorem}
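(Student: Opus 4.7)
The plan is to account for the seven non-zero $(2,3,7)$ structure constants computed in the preamble of this section by distributing each one exactly among the conjugacy classes of Hurwitz subgroups already identified, and then to read off maximality from the visible overgroups. The key inputs are the subgroup catalogues compiled in Sections~\ref{cent7}--\ref{F22sub}, which already exhibit $Fi_{22}$, $L_2(13)$, ${}^3D_4(2)$, and numerous classes of $L_2(8)$ and $L_3(2)$, each with their normalizers in $G.S_3$.

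For the first bullet I would proceed one triple-type at a time. The large constant $\xi(2C,3C,7B)=329/3$ should decompose as $54$ from the three $G$-classes of $Fi_{22}$ (fused into one $G.S_3$-class, with nine automorphic pairs of generating triples each), plus $18$ from the three $G$-classes of $L_2(13)$, plus $3$ each from the three further $G$-classes of ${}^3D_4(2)$ and the three $L_3(2)$ classes with normalizer $L_3(2){:}2\times L_3(2){:}2$, leaving the residual $95/3=1/6+21\cdot 3/2$ to be distributed across the seven $L_2(8)$ classes already catalogued in Subsections~\ref{C3A} and \ref{C2A}. The smaller constant $\xi(2C,3C,7A)=11/3$ decomposes analogously as $1+1+3/2+1/6$. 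Uniqueness of the $Fi_{22}$ and $L_2(13)$ $G.S_3$-classes is built in: Section~\ref{F22sub} already gives a single $G.S_3$-class of $Fi_{22}{:}2$. Of the two $G.S_3$-classes of ${}^3D_4(2)$, the one with normalizer ${}^3D_4(2){:}3\times S_3$ (centralizing a $3E$ element, from Subsection~\ref{C3E}) is not contained in $F_4(2)\times 2$ and is the unique maximal case per the catalogue of Section~\ref{exist}; $Fi_{22}{:}2$ also lies on that list, while $L_2(13){:}2$ and the second ${}^3D_4(2){:}3\times 2$ do not.

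To confirm the $L_2(8)$ and $L_3(2)$ counts and the identification of isomorphism types within each triple-class, I would use the pre-computed fingerprint table: the orders of the listed words in $s=xy$ and $t=xy^{-1}$ distinguish $L_2(7)$, $L_2(8)$, $L_2(13)$, ${}^3D_4(2)$, $Fi_{22}$, and the exceptional extension $2^3\udot L_3(2)$ unambiguously, and the asymmetric word $sssttst$ distinguishes a pair from its reciprocal. Running this test on a computer-generated sample of $(2,3,7)$-triples in a matrix representation of $G$ both verifies the case counts and sorts the $36$ triples of type $(2C,3B,7B)$ into $30$ generating $2^3\udot L_3(2)$ and $6$ generating $L_3(2)$. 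For the $L_3(2)$ bullet, the $(2A,3A,7A)$ class has normalizer $(L_3(2)\times L_3(4)){:}2$ and accounts for all of $\xi(2A,3A,7A)=1/20160$; every other $L_3(2)$ class exhibits a non-trivial centralizer --- $L_3(4)$, $2^4A_5$, the sibling $L_3(2)$, or a subgroup visible in $O_{10}^-(2)$, $F_4(2)$, or $2^{1+20}{:}U_6(2)$ --- so its normalizer embeds properly in a known maximal subgroup of Section~\ref{exist} and is therefore not itself maximal.

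The main obstacle is the precise allocation within the $(2C,3C,7B)$ and $(2C,3B,7B)$ structure constants, since several distinct $G.S_3$-classes can contribute the same amount $3/2$ to $\xi$. Distinguishing them requires both the fingerprint bookkeeping and the centralizer computations of Subsections~\ref{C3A} and \ref{C2A}, and I anticipate that the statistical sampling technique advertised in the paper's introduction --- computing empirical fingerprint frequencies on a large random sample of $(2,3,7)$-triples --- will be needed to rule out that an extra class has been missed or that two apparently distinct classes are fused in $G.S_3$.
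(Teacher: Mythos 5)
Your overall strategy coincides with the paper's: decompose the seven non-zero $(2,3,7)$ structure constants among the catalogued Hurwitz subgroups, identify isomorphism types by fingerprints, verify the allocation by statistical sampling, and read off maximality from Section~\ref{exist}. Two concrete points, however, do not survive scrutiny. First, in the $(2C,3C,7B)$ decomposition you attribute the contribution of $3$ to ``the three $L_3(2)$ classes with normalizer $L_3(2){:}2\times L_3(2){:}2$''. Those classes are the factors of $L_3(2)\times L_3(2)$; they are of type $(2B,3B,7B)$, and each is centralized by an $L_3(2)$ of order $168$, so they contribute nothing to $\xi(2C,3C,7B)$ and only $3/168=1/56$ to $\xi(2B,3B,7B)$, which is where the paper uses them. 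The amount $3$ in $\xi(2C,3C,7B)$ comes instead from the three classes of diagonal $L_3(2)$ of type $(2C,3C,7B)$ inside $L_3(2)\times L_3(4)$ (the ones flagged in the Remark of Section~\ref{cent7}). With your attribution the arithmetic cannot work (a class with centralizer of order $168$ cannot contribute $1$ to a structure constant), and $\xi(2B,3B,7B)=15/56$ is simultaneously left double-counted and unaccounted.

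Second, your statement that ``every other $L_3(2)$ class exhibits a non-trivial centralizer --- $L_3(4)$, $2^4A_5$, the sibling $L_3(2)$, or a subgroup visible in $O_{10}^-(2)$, $F_4(2)$, or $2^{1+20}{:}U_6(2)$'' is asserted rather than derived, and it is precisely the delicate part of the second bullet. For the types $(2C,3A,7A)$, the residue in $(2B,3B,7B)$, and $(2C,3B,7B)$, the paper does not exhibit explicit centralizing subgroups; it argues from the smallness of the structure constants: $\xi(2C,3A,7A)=3/64<1$ and the leftover $1/4$ in $\xi(2B,3B,7B)$ force non-trivial centralizers outright, and in the $(2C,3B,7B)$ case the six fingerprints of triples generating $L_3(2)$, together with the four classes of generating triples of $2^3\udot L_3(2)$, measured against the total $43/8<6$, show that no $L_3(2)$ of that type can have trivial centralizer. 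You need this counting step (or an actual determination of the centralizers, which the paper deliberately avoids) to close the second bullet; sorting the $36$ triples by fingerprint alone does not do it. The remainder of your allocation --- the $54+18+3$ pieces, the residue $95/3=1/6+21\cdot(3/2)$ spread over the seven $7B$-type $L_2(8)$ classes, the split $11/3=1+1+3/2+1/6$, and the maximality statements for $Fi_{22}$ and the $3E$-centralizing ${}^3D_4(2)$ via Section~\ref{exist} --- matches the paper's proof.
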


\section{Classifying $(2,3,11)$ triples}
\label{11triples}
We next analyse the structure constants of type $(2,3,11)$ in a similar manner.
First we pre-compute the fingerprints for simple groups we know to be generated
by such a triple, including $L_2(11)$, $M_{12}$, $A_{11}$, $A_{12}$, $U_6(2)$,
$O_{10}^-(2)$, $Fi_{22}$. Note that the groups
 $2^{10}{:}L_2(11)$ and $2\udot U_6(2)$ are also generated by $(2,3,11)$ triples.
The fingerprints for the smaller groups are listed in Table~\ref{2311small}.
\begin{table}
\caption{\label{2311small}Some fingerprints of type $(2,3,11)$}
$$\begin{array}{llrrrrrrr}
L_2(11) & 2A,3A & 11&5&5&6&3&11&11\cr
A_{11}&2B,3C & 11&8&11&8&12&11&11/21\cr
 && 11&12&9&14&9&9&8\cr
&&11&14&10&11&11&14&8\cr
&&11&20&12&11&5&8&12/12\cr
&&11&21&8&11&12&11&6/12\cr
O_{10}^-(2)&2D,3F &11&9&15&8&12&35&30\cr
&&11&12&30&18&17&12&12/17\cr
&&11&17&12&6&18&18&17\cr
&&11&20&17&24&21&9&11/18\cr
&&11&21&21&24&5&18&30\cr
&&11&24&12&30&15&21&12/17\cr
M_{12} & 2A,3A & 11&6&10&6&10&6&8\cr
&2B,3B & 11&6&6&11&6&11&8\cr
A_{12} & 2C,3C & 11&14&9&20&11&35&4/9\cr
&&11&21&35&35&10&8&11\cr
U_6(2) & 2C,3C & 11&9&18&12&7&12&18\cr
 && 11&12&11&8&12&8&18
\end{array}$$
\end{table} 
The fingerprints of the $89$ triples which generate $Fi_{22}$ are listed in
Table~\ref{F22gen2311}.
\begin{table}
\caption{\label{F22gen2311}The $89$ fingerprints of type $(2,3,11)$ for $Fi_{22}$}
$$\begin{array}{lrrrrrrr}
&11&6&11&18&22&18&13/21\cr
&11&6&14&8&13&8&9/15\cr
&11&7&15&12&13&12&11/16\cr
&11&8&12&10&22&20&13/21\cr
&11&9&10&11&12&13&15/24\cr
&11&9&11&13&24&8&11/12\cr
&11&9&11&13&24&11&12/16\cr
&11&9&21&11&10&8&22\cr
&11&9&21&30&12&22&12/14\cr
&11&9&24&22&16&22&16/21\cr
&11&10&12&22&13&13&18/18\cr
&11&10&13&11&22&11&20/24\cr
&11&10&14&18&20&22&13/15\cr
&11&10&20&16&13&22&12/18\cr
&11&12&20&14&6&20&12/22\cr
&11&12&21&11&21&12&20\cr
&11&12&21&30&11&13&9/9\cr
&11&12&22&13&30&13&14/24\cr
&11&12&22&14&12&11&13/21\cr
&11&13&9&20&8&20&8/22\cr
&11&13&11&20&22&9&13/14\cr
&11&13&13&11&12&14&8/21\cr
&11&13&13&11&18&8&13/21
\end{array}
\begin{array}{lrrrrrrr}
&11&13&13&13&24&12&12/15\cr
&11&13&13&18&10&12&9/24\cr
&11&13&13&22&13&20&12/18\cr
&11&13&21&20&16&18&18/21\cr
&11&13&24&15&8&13&12/13\cr
&11&13&24&30&12&21&11/12\cr
&11&14&8&11&9&12&13/13\cr
&11&18&9&13&15&14&12/16\cr
&11&18&12&9&13&13&14/22\cr
&11&18&12&22&16&9&14/15\cr
&11&18&14&18&9&8&13/15\cr
&11&18&15&15&14&10&10/13\cr
&11&18&22&20&11&22&18/24\cr
&11&20&12&18&9&16&16/22\cr
&11&20&22&8&13&14&13/30\cr
&11&21&7&24&18&21&12\cr
&11&24&8&16&15&30&14/22\cr
&11&24&9&22&13&22&10/10\cr
&11&24&11&12&16&11&8/9\cr
&11&24&13&11&14&22&5/7\cr
&11&30&16&18&13&20&11/24\cr
&11&30&21&8&14&30&10/18\cr
&11&30&24&11&11&30&13/14
\end{array}$$
\end{table}
(For practical purposes, especially for distinguishing
between a triple and its reciprocal,
these fingerprints were then extended by the orders of $sssststt$ and
$sssststtt$ 
to give extra discriminating power.)

We calculate the following structure constants 
of type $(2,3,11)$ in $G$ (all others are zero):
\begin{itemize}
\item $\xi(2B,3B,11A/B)=1/6$ each. This is accounted for by $L_2(11)$.
\item $\xi(2B,3C,11A/B)=1$ each. 
Such a subgroup must be centralized by a
non-trivial automorphism, and 
one of the two types of $(2,3,11)$
generating tripes for the subgroup $M_{12}$ of $O_{10}^-(2)$
accounts for the full structure constant.
\item $\xi(2C,3B,11A/B)=37/2$ each.
\item $\xi(2C,3C,11A/B)=1650$ each.
\end{itemize}

\subsection{The $(2C,3B,11)$ case}
%
The known simple subgroups make the following contributions to each
structure constant of $18\frac12$: 
\begin{itemize}
\item $1$ from the other type of $(2,3,11)$ generators
for the subgroup $M_{12}$, with normalizer $M_{12}\times 3$ in $G.S_3$;
\item $3$ from $A_{12}$, with normalizer $(A_{12}\times 3){:}2$ in $G.S_3$;
\item $1$ from $U_6(2)$, with normalizer $S_3\times U_6(2){:}S_3$ in $G.S_3$.
\end{itemize}

Now the $1$-cohomology of $U_6(2)$ on the $20$-dimensional module
is $2$-dimensional, so there are four classes of $U_6(2)$ in $2^{20}{:}U_6(2)$,
which fuse in $2^{20}{:}U_6(2){:}S_3$ into one class of $U_6(2)$ with normalizer
$U_6(2){:}S_3$, and one class with normalizer $U_6(2){:}2$. The former lifts to
$2\times U_6(2){:}S_3$, and the corresponding normalizer in $G.S_3$ is
$S_3\times U_6(2){:}S_3$. On the other hand, there is a subgroup
$2\udot U_6(2)$ inside $Fi_{22}$, and therefore the other case must lift
to $2\udot U_6(2).2$. This group does not centralize any element of order $3$
in $G.S_3$, so this is the full normalizer. Hence the three classes
of $2\udot U_6(2)$ in $G$ account for a contribution of $9$ to each
of the two 
$(2,3,11)$ structure constants.

Also within $2^{1+20}{:}U_6(2)$ there is a subgroup $2^{1+20}{:}L_2(11)$,
which contains three conjugacy classes of $2\times 2^{10}{:}L_2(11)$, permuted by
the $S_3$ of outer automorphisms. Each $(2,3,11)$ generating triple for
$L_2(11)$ lifts to $4$ triples of type $(2,3,11)$ in $2^{10}{:}L_2(11)$,
since the elements of orders $2$ and $3$ each centralize a $6$-space in the $2^{10}$.
One of these triples generates the complementary $L_2(11)$, 
which is of type $(2B,3B)$ in $G$, while the other
three must generate the whole of $2^{10}{:}L_2(11)$.
Extending to $2^{1+20}{:}U_6(2){:}S_3$ we have a subgroup
$2^{1+20}{:}(U_5(2)\times 3){:}2$ and hence a subgroup
$2^{1+20}{:}(L_2(11)\times 3){:}2$. Altogether, therefore, the
contribution to each $(2,3,11)$ structure constant is $9/2$.

Altogether we have accounted for an amount $1+3+1+9+4\frac12=18\frac12$,
that is the whole structure constant.

\subsection{The $(2C,3C,11)$ case}
In this case, it turns out that most of the triples generate the whole
of $G$, and a computer search to collect and identify these. 
The total number of $(2C,3C,11A/B)$ fingerprints collected was $562$. 
Of these, $1$ generates $L_2(11)$, and $8$ generate $A_{11}$, while
$9$ generate $O_{10}^-(2)$. These subgroups all centralize an outer automorphism
of order $3$, so each fingerprint contributes $1$ to each structure constant.
The remaining $544$ fingerprints correspond to generators of either $Fi_{22}$ or $G$,
and in either case contribute $3$ to each structure constant. Thus we have accounted
for the full structure constant of $18+3\times544=1650$.

As noted above, there are $89$ fingerprints for generators of $Fi_{22}$,
listed in Table~\ref{F22gen2311}.
The $455$ fingerprints for
$(2,3,11)$ triples that generate $G$ are listed in the Appendix.

\subsection{Conclusion}
In this section we have proved the following result. The proof
depends crucially on the computational analysis of the $(2,3,11)$
triples of type $(2C,3C)$.
\begin{theorem}
 There is a unique class of each of the groups $O_{10}^-(2)$,
$A_{12}$, $A_{11}$, $M_{12}$ and $U_6(2)$, and two classes of
$L_2(11)$ in $G.S_3$. In every case the centralizer in $G.S_3$
is non-trivial, and the only cases in which the normalizer
is maximal are $O_{10}^-(2)$ and $U_6(2)$ (in all of $G$,
$G.2$, $G.3$ and $G.S_3$).
\end{theorem}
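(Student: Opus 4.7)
The plan is to assemble the case analysis carried out in Subsections 11.1--11.4 and read off the consequences for the four statements of the theorem (uniqueness of classes, centralizers, and maximality). Concretely, for each of the four non-zero structure constants of type $(2,3,11)$ in $G$ I would match, line by line, the known contributions against the computed total; if the accounting is exact, there is no room for any further class of $(2,3,11)$-generated subgroup, and since every one of the groups $O_{10}^-(2)$, $A_{12}$, $A_{11}$, $M_{12}$, $U_6(2)$, $L_2(11)$ is itself generated by a $(2,3,11)$ triple, uniqueness of the conjugacy class follows.

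More specifically: the $(2B,3B,11)$ structure constant of $\tfrac16$ is exhausted by the single class of $L_2(11)$ sitting in $U_6(2)$ (of type $(2B,3B)$), so this class is unique; the $(2B,3C,11)$ structure constant of $1$ is exhausted by a single class of $M_{12}$ inside $O_{10}^-(2)$; the $(2C,3B,11)$ structure constant of $\tfrac{37}{2}$ is exhausted by the listed contributions from $M_{12}$, $A_{12}$, $U_6(2)$, $2\udot U_6(2) < Fi_{22}$, and $2^{1+20}{:}(L_2(11)\times3){:}2$; and the $(2C,3C,11)$ structure constant of $1650$ is exhausted by the explicit fingerprint counts giving $L_2(11)$, $A_{11}$, $O_{10}^-(2)$, $Fi_{22}$ and $G$ itself. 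Since in every case the contribution per class exactly matches what we would expect from a single conjugacy class (using the centralizer orders computed in Sections~\ref{cent5}--\ref{ccexistC3} and the cohomology data used in Subsection~11.1), I would conclude that there is exactly one class of each of $O_{10}^-(2)$, $A_{12}$, $A_{11}$, $M_{12}$, $U_6(2)$ and exactly two classes of $L_2(11)$ (the second being the class of type $(2C,3C)$ inside $O_{10}^-(2)$).

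For the centralizer claim I would simply read off from the accounting: each of the six isomorphism types is observed to lie inside one of the $p$-local overgroups $(3\times O_{10}^-(2)){:}2$ or $S_3\times U_6(2){:}S_3$ (or both), so in $G.S_3$ each has a non-trivial centralizer (containing at least an outer element of order $3$). For maximality, I would invoke Section~\ref{exist}: $O_{10}^-(2)$ and $U_6(2)$ appear there as known maximal (rank) subgroups in all of $G$, $G.2$, $G.3$, $G.S_3$; conversely, $A_{12}$, $A_{11}$, $M_{12}$ are visible inside $O_{10}^-(2)$, the $(2B,3B)$ class of $L_2(11)$ inside $U_6(2)$, and the $(2C,3C)$ class of $L_2(11)$ inside $O_{10}^-(2)$, so none of these four is maximal.

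The main obstacle is the $(2C,3C,11)$ case: here the structure constant $1650$ is far too large for a purely theoretical splitting, and one must trust the computer enumeration of $562$ distinct fingerprints and their identification against the pre-computed fingerprint tables for $L_2(11)$, $A_{11}$, $O_{10}^-(2)$, $Fi_{22}$ and $G$. The key sanity check is that the tallied contributions (with multiplicities $1$ for subgroups centralized by an outer $3$-element and $3$ otherwise) sum exactly to $18+3\cdot544=1650$; any leftover would signal a missing class. I would present this arithmetic as the crux of the proof and rely on the fingerprint redundancy (including reciprocal-distinguishing extensions) to rule out mis-identification.
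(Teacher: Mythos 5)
Your proposal is correct and follows essentially the same route as the paper's own proof: an exact accounting of the four non-zero $(2,3,11)$ structure constants against the known classes (with the cohomology/central-extension bookkeeping in the $(2C,3B)$ case), and reliance on the computer-enumerated fingerprints to settle the $(2C,3C)$ case via $18+3\cdot544=1650$. The centralizer and maximality conclusions are read off from the same normalizer and containment data as in the paper, so no further comment is needed.
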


\section{Classifying $(2,3,13)$ triples}
\label{13triples}
Groups generated by $(2,3,13)$ triples include
$L_2(13)$, $L_2(25)$, $L_3(3)$, $L_4(3)$, $G_2(3)$,
${}^2F_4(2)'$, $^3D_4(2)$, $F_4(2)$ and $Fi_{22}$. Of these,
we have already classified $L_2(13)$, ${}^3D_4(2)$ and
$Fi_{22}$ using Hurwitz generators. The group
$3^3{:}L_3(3)$ is also generated by ($8$ classes of)
$(2,3,13)$ triples, and so is the maximal subgroup
$3^{3+3}{:}L_3(3)$ of $O_7(3)$, a fact
which initially caused me a significant amount of
difficulty in accounting for the structure
constants of type $(2C,3C,13)$.

We pre-compute the fingerprints of the smaller simple groups above,
and list them in Table~\ref{2313small}.
There are also $109$ fingerprints for generators of type $(2,3,13)$ for $Fi_{22}$,
which are of $Fi_{22}$-type $(2C,3D,13)$ and are listed in Table~\ref{F22gens2313};
as well as $261$ fingerprints for $(2,3,13)$ generators of $F_4(2)$
that are of $F_4(2)$-type $(2D,3C,13)$ and are listed in Table~\ref{F42gens2D3C13}. 

\begin{table}
\caption{\label{2313small}Some fingerprints of type $(2,3,13)$}
$$\begin{array}{llrrrrrrr}
L_2(13) && 13 &7&6&7&7&2&7\cr
L_2(25) && 13&4&13&13&13&3&6\cr
&& 13 &12&6&13&3&13&13\cr
&& 13&13&12&5&13&13&13\cr
L_3(3)&&13 &4&8&13&8&3&13\cr
&& 13&6&13&13&8&13&13\cr
{}^2F_4(2)'& 2A,3A & 13 & 5 & 12 & 8 &8& 13 & 10\cr
L_4(3) & 2A,3D & 13 & 5 & 12 & 20&20&13&13\cr
  && 13&5&20&6&6&13&6\cr
G_2(3)& 2A,3C & 13&13&8&7&6&12&8\cr
&2A,3E & 13&9&13&12&7&9&8\cr
&&13&12&12&7&8&8&7/8\cr
{}^3D_4(2)& 2B,3A & 13 &21&12&13&6&21&13\cr
&&13&28&18&21&6&13&12/18\cr
& 2B,3B & 13&7&18&18&21&8&12/14\cr
&&13&9&12&13&21&13&12/14\cr
&&13&18&28&13&14&18&12/21\cr
&&13&21&28&18&28&21&12\cr
Fi_{22}& 2C,3C & 13&12&12&22&16&13&11/30\cr
&&13&18&14&22&9&20&13/20\cr
&&13&21&18&11&9&15&16/16
\end{array}$$
\end{table}
\begin{table}
\caption{\label{F22gens2313}Fingerprints of type $(2,3,13)$ for $Fi_{22}$}
$$\begin{array}{rrrrrrrr}
13&6&9&8&21&8&14/22\cr
13&6&16&18&13&18&11/18\cr
13&6&18&30&13&30&16\cr
13&6&30&13&30&13&16\cr
13&7&12&12&21&4&13\cr
13&7&12&22&22&8&13/22\cr
13&7&13&11&10&8&9/14\cr
13&7&13&12&24&4&13/13\cr
13&8&7&13&24&13&18/18\cr
13&8&9&8&11&20&11/14\cr
13&8&9&8&11&21&8/9\cr
13&8&16&8&8&13&7/8\cr
13&8&21&13&12&13&13/21\cr
13&8&22&13&9&22&13/20\cr
13&9&11&10&15&9&11/12\cr
13&9&15&12&22&21&13/13\cr
13&12&7&13&24&11&11/24\cr
13&12&9&13&16&13&11/16\cr
13&12&9&16&16&30&8/12\cr
13&12&10&13&14&13&9/24\cr
13&12&11&9&11&13&11/21\cr
13&12&11&11&13&22&8/24\cr
13&12&11&18&13&18&10/24\cr
13&12&13&12&12&21&9/11\cr
13&12&14&13&11&12&13/30\cr
13&12&14&13&16&22&11/24\cr
13&12&14&16&10&10&9/22\cr
13&12&14&21&21&13&18/18\cr
13&12&16&13&11&20&12/12
\end{array}\qquad
\begin{array}{rrrrrrr}
13&12&16&13&16&22&12/12\cr
13&12&16&22&9&13&8/22\cr
13&12&21&13&8&13&13/22\cr
13&13&11&9&24&22&21/21\cr
13&13&13&11&24&13&9/9\cr
13&14&8&9&11&11&10/10\cr
13&14&8&15&13&11&9/22\cr
13&14&11&16&6&18&12/13\cr
13&14&12&18&30&18&21/22\cr
13&14&13&21&12&13&11/13\cr
13&14&14&11&8&7&16/22\cr
13&14&16&12&24&14&21/22\cr
13&14&16&22&21&7&9/12\cr
13&14&22&9&8&11&12/30\cr
13&14&22&9&8&14&11/30\cr
13&15&13&7&16&9&14/22\cr
13&18&12&13&9&21&8/12\cr
13&18&14&22&8&30&12/18\cr
13&21&12&9&12&12&30\cr
13&21&12&9&18&11&13/18\cr
13&21&12&18&13&12&6/10\cr
13&21&14&12&18&22&14/18\cr
13&21&14&16&13&11&9/30\cr
13&21&18&15&21&24&11\cr
13&21&20&10&12&18&13/20\cr
13&21&20&22&13&13&10/11\cr
13&30&10&8&21&10&13/22\cr
13&30&13&21&16&21&14/18\cr
\end{array}$$
\end{table}
\begin{table}
\caption{\label{F42gens2D3C13}$(2D,3C,13)$ generators for $F_4(2)$}
$$\begin{array}{rrrrrrr}
13&6&10&13&16&13&28\cr
13&6&13&12&21&12&17/18\cr
13&6&16&28&12&28&17\cr
13&6&16&30&30&30&16/17\cr
13&6&17&18&28&18&24\cr
13&6&24&18&24&18&28\cr
13&8&12&10&13&20&6\cr
13&8&13&9&28&30&17/24\cr
13&8&13&30&18&21&12/24\cr
13&8&17&17&28&28&17/21\cr
13&8&17&21&9&17&21/30\cr
13&8&20&17&13&8&6/18\cr
13&8&21&28&24&24&28/30\cr
13&8&24&28&30&28&13/21\cr
13&8&30&28&21&24&18/21\cr
13&10&10&12&12&15&16\cr
13&10&12&8&8&15&12\cr
13&10&13&10&13&13&21/30\cr
13&10&14&16&12&8&16\cr
13&10&24&24&16&30&14\cr
13&12&9&18&17&24&21/30\cr
13&12&12&17&21&13&20/28\cr
13&12&12&24&17&21&17/20\cr
13&12&13&12&16&21&18/24\cr
13&12&13&12&28&12&13/30\cr
13&12&14&16&21&17&24/24\cr
13&12&16&13&30&9&12/28\cr
13&12&16&16&12&21&12/21\cr
13&12&16&18&13&17&17/21\cr
13&12&17&12&12&12&12/20\cr
13&12&17&16&16&13&9/18\cr
13&12&17&20&20&18&9/24\cr
13&12&17&21&30&18&16/17\cr
13&12&18&12&21&17&12/16\cr
13&12&18&13&21&12&21/28\cr
13&12&18&17&30&20&13/18\cr
13&12&18&28&24&13&10/12\cr
13&12&20&18&12&30&17/24\cr
13&12&21&9&20&30&12/18\cr
13&12&21&17&12&13&21/30\cr
13&12&21&18&20&21&16/21\cr
13&12&24&28&12&30&18/28\cr
13&12&28&17&18&15&13/18\cr
13&12&30&17&30&18&12/24\cr
13&12&30&21&28&24&12/16
\end{array}
\qquad
\begin{array}{rrrrrrr}
13&13&13&13&13&10&24\cr
13&13&13&18&18&13&20/28\cr
13&13&17&21&12&18&16/21\cr
13&13&21&14&10&17&16\cr
13&13&24&12&12&17&24/24\cr
13&14&12&28&18&12&8/28\cr
13&14&17&28&21&17&17/20\cr
13&14&18&14&14&21&12/21\cr
13&15&16&10&30&15&21/21\cr
13&15&18&21&24&14&13/18\cr
13&16&9&24&15&17&24/28\cr
13&16&10&9&12&20&17/20\cr
13&16&12&13&21&16&12/28\cr
13&16&12&17&28&28&24/28\cr
13&16&13&8&8&13&10\cr
13&16&13&9&12&21&12/13\cr
13&16&13&13&28&12&21/24\cr
13&16&15&9&17&10&17/30\cr
13&16&17&16&21&28&12/24\cr
13&16&18&12&30&16&18/30\cr
13&16&18&13&17&21&10/28\cr
13&16&20&13&16&12&28\cr
13&16&21&8&10&12&12/28\cr
13&16&21&20&21&28&18/28\cr
13&16&21&30&18&17&16/18\cr
13&16&24&21&14&24&12/28\cr
13&16&24&30&21&30&18/30\cr
13&16&30&12&12&21&16/28\cr
13&17&8&8&15&17&12/18\cr
13&17&9&20&21&12&10/28\cr
13&17&9&28&28&12&21/24\cr
13&17&12&16&6&21&17/28\cr
13&17&13&10&28&18&24/24\cr
13&17&13&13&24&16&13/21\cr
13&17&15&21&24&8&12/24\cr
13&17&17&17&12&13&12/18\cr
13&17&17&28&24&12&17/21\cr
13&17&18&15&18&30&20/21\cr
13&17&20&13&15&18&8/9\cr
13&17&21&17&20&20&24/30\cr
13&17&21&28&12&20&17/17\cr
13&17&21&30&12&16&12/13\cr
13&17&24&17&17&28&17/24\cr
13&17&24&17&30&21&9/13\cr
13&17&24&20&30&13&17/21
\end{array}$$
\end{table}
\begin{table}
$$
\begin{array}{rrrrrrr}
13&17&24&24&24&12&13/14\cr
13&17&24&24&30&8&21/24\cr
13&17&28&21&30&21&12/21\cr
13&17&28&28&12&21&17/21\cr
13&17&30&18&18&21&8/16\cr
13&18&8&10&10&18&24/28\cr
13&18&12&12&18&21&12/18\cr
13&18&16&12&21&21&13/30\cr
13&18&18&12&14&20&20/21\cr
13&18&28&28&18&24&9/9\cr
13&20&9&17&16&10&12/24\cr
13&20&13&10&21&13&12/24\cr
13&20&13&13&13&12&17\cr
13&20&16&8&10&13&28\cr
13&20&20&18&24&28&17/18\cr
13&20&21&24&24&17&24/30\cr
13&20&28&30&18&10&20/21\cr
13&21&8&16&21&12&24\cr
13&21&12&14&20&15&9/15\cr
13&21&12&15&10&12&13\cr
13&21&12&24&28&17&15/24\cr
13&21&15&24&12&12&18\cr
13&21&16&17&17&24&10/13\cr
13&21&17&17&18&12&24/24\cr
13&21&21&12&24&8&21/30
\end{array}\qquad
\begin{array}{rrrrrrr}
13&21&24&17&30&17&21/21\cr
13&21&24&30&20&10&24\cr
13&24&12&15&12&24&16/30\cr
13&24&12&16&20&15&15/28\cr
13&24&14&20&30&13&14/18\cr
13&24&16&21&14&17&28/30\cr
13&24&17&17&30&14&12/13\cr
13&24&18&13&18&12&16/17\cr
13&24&18&17&24&12&18/24\cr
13&24&20&20&16&18&12/24\cr
13&24&24&13&28&21&17/30\cr
13&24&30&30&21&21&21/28\cr
13&28&12&12&13&14&18/20\cr
13&28&12&20&12&12&8/16\cr
13&28&13&24&8&20&24/30\cr
13&28&18&20&18&16&13/17\cr
13&28&21&24&21&18&13/16\cr
13&28&21&24&24&20&12/30\cr
13&28&24&9&18&20&17/20\cr
13&28&28&12&13&20&8/21\cr
13&30&12&17&21&24&13/18\cr
13&30&13&24&16&17&17/18\cr
13&30&21&13&21&12&24/24\cr
13&30&21&17&28&18&24/30\cr
13&30&21&30&24&21&18/18\cr
\end{array}$$
\end{table}

In fact the non-zero $(2,3,13)$ structure constants in $F_4(2)$ are
\begin{itemize}
\item $\xi(2C,3C,13A)=5$, of which $1$ is from ${}^2F_4(2)'$ and $4$ is from $L_4(3)$;
\item $\xi(2D,3A,13A)=\xi(2D,3B,13A)=3$, from generators of ${}^3D_4(2)$;
\item $\xi(2D,3C,13A)=552$, of which $14$ is from ${}^3D_4(2)$, and $2$ is from
$L_3(3)$, and $8$ is from $3^3{:}L_3(3)$, 
and $6$ is from $L_2(25)$, 
leaving $522$ for
$261$ automorphic pairs of generators for $F_4(2)$.
\end{itemize}

There are only four non-zero structure constants of type $(2,3,13)$ in $G$,
that is
\begin{itemize}
\item $\xi(2B,3C,13A)=15$;
\item $\xi(2C,3A,13A)=3$;
\item $\xi(2C,3B,13A)=63$;
\item $\xi(2C,3C,13A)=9658$.
\end{itemize}
The first three of these can be easily accounted for by known subgroups.
To increase the reliability of the results, however, we also computed explicitly
triples of group elements of these types.
\subsection{The $(2B,3C,13)$ case}
An explicit search of the $2B,3C$ case yields three fingerprints with $xy$ of order $13$,
namely those for ${}^2F_4(2)'$ and $L_4(3)$.
There are three known conjugacy classes of ${}^2F_4(2)'$ in $G$, one in each of the
three copies of $F_4(2)$, and each with normalizer ${}^2F_4(2)$. Each is centralized
by an outer automorphism of order $2$, so in total this accounts for an amount $3$
of the structure constant. 
There are also
three known conjugacy classes of subgroups $L_4(3)$ in $G$, 
again one in each of the three copies
of $F_4(2)$. In this case, the outer automorphism group of $L_4(3)$ is $2^2$, of which
only $2$ is realised in $G$. It follows that the amount of structure constant
accounted for here is $2\times2\times 3=12$. Hence we have accounted
for the full structure constant of $3+12=15$.

\subsection{The $(2C,3A,13)$ case}
\label{2C3A13}
An explicit search of the $(2C,3A)$ case yields three fingerprints
with $xy$ of order $13$, equal to the three fingerprints for ${}^3D_4(2)$
generators of type $(2B,3A,13)$.
Since there is a subgroup ${}^3D_4(2)$ of this type, it accounts for the
full structure constant.

\subsection{The $(2C,3B,13)$ case}
\label{2C3B13}
A explicit search of the $(2C,3B)$ case
yields $12$ fingerprints where $xy$ has order $13$,
equal to the pre-computed fingerprints for $Fi_{22}$ (six fingerprints
of type $(2C,3C,13)$),
$G_2(3)$ (three of type $(2A,3E,13)$), and ${}^3D_4(2)$ (three of type $(2B,3A,13)$).

There are three conjugacy
classes in $G$ of self-normalizing subgroups $Fi_{22}$, which account for an amount
$6\times 6=36$ of the structure constant.
Also there are
three classes of self-normalizing subgroups $G_2(3)$, accounting for an amount
$3\times 6=18$ of the structure constant. 
There is also a generating triple of $G_2(3)$-type $(2A,3C)$,
which turns out to be of type $(2C,3C)$ in $G$ (see below).

Recall that
there are two classes of ${}^3D_4(2)$ inside $F_4(2)$, differing in the
fusion of
$7$-elements to $F_4(2)$, and hence to $G$. One class has been already found
above, in Subsection~\ref{2C3A13},
generated by $(2C,3A,13)$-triples. These groups lie in centralizers of 
outer automorphisms of order $3$. The three triples in the present case therefore
generate a ${}^3D_4(2)$ with normalizer ${}^3D_4(2){:}3\times 2$ in $G.S_3$,
so account for an amount $3\times 3=9$ of the structure constant.

This accounts for the full amount $36+18+9=63$ of structure constant.

\subsection{The $(2C,3C,13)$ case}
\label{2C3C13}
The case $(2C,3C)$ is analyzed as follows.
We collected $1599$ distinct
fingerprints, of which $1213$ generated $G$, hence accounting for
an amount $7278$ of the structure constant. There were $261$ generating $F_4(2)$, and
$109$ generating $Fi_{22}$, accounting for a further $2220$, making a running total
of $9498$. Two copies of ${}^3D_4(2)$ account for $7+21$ of the structure constant,
while single copies of $G_2(3)$, $L_2(13)$ and $L_3(3)$ account for a further $18$,
for a running total of $9544$. The subgroup $L_2(25)$ also accounts for $18$,
making a total of $9562$. Out of the total of $9658$, therefore,
there is still $96$ to account for.

Now there are three classes of $3^3.3^3.L_3(3)$ in $G$. The total $(2,3,13)$
structure constant in these groups is $3.4.3.3=108$. Of this, $12$ comes
from triples generating (three classes of)
$L_3(3)$, while $24$ is attributable to (three classes of) $3^3{:}L_3(3)$
and the remaining $72$ comes from generators for the whole group.
Hence we exactly account for the remaining structure constant of $24+72=96$.

\subsection{Conclusion}
In this section we have completely classified subgroups isomorphic to
$L_3(3)$, $L_4(3)$, $G_2(3)$, $L_2(25)$, $F_4(2)$ and ${}^2F_4(2)'$, as well as
verifying the earlier results for $L_2(13)$, ${}^3D_4(2)$ and $Fi_{22}$.

\begin{theorem}
In $G.S_3$ there is a unique class of each of $G_2(3)$, $L_4(3)$,
$F_4(2)$, ${}^2F_4(2)'$, $L_2(25)$ and $L_3(3)$. 
In the case of $F_4(2)$, the normalizer in $G$ and $G.2$ is maximal,
and there are three conjugacy classes. In all other cases the
normalizer is not maximal in any of $G$, $G.2$, $G.3$ or $G.S_3$.
\end{theorem}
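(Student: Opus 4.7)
The plan is to read off the theorem directly from the structure-constant and fingerprint data assembled in the four subsections that handle the four non-zero $(2,3,13)$ structure constants. Recall that in $G$ these constants are $\xi(2B,3C,13A)=15$, $\xi(2C,3A,13A)=3$, $\xi(2C,3B,13A)=63$ and $\xi(2C,3C,13A)=9658$, and every subgroup generated by a $(2,3,13)$-triple contributes to exactly one of them as a sum $\sum 1/|C_G(x,y)|$ over its $G$-conjugacy classes of generating triples. The theorem will follow once each of these four constants is saturated by a known list of classes with no residue left over.

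First I would dispose of the three small cases using the subsection analyses already in place. In $(2B,3C,13)$ the only fingerprints that arise are those of ${}^2F_4(2)'$ and $L_4(3)$, and their known centralizer structure inside $F_4(2)\times 2$ yields three $G$-classes of each, with total contribution $3+12=15$. In $(2C,3A,13)$ the three fingerprints observed match ${}^3D_4(2)$ of type $(2B,3A,13)$, centralized by a $3E$-element, and account for the full $3$. In $(2C,3B,13)$ the fingerprints match $Fi_{22}$, the second type of ${}^3D_4(2)$ in $F_4(2)$, and $G_2(3)$, contributing $36+9+18=63$. Each identification leans on the Hurwitz classification of Section~\ref{Hurwitz} together with the uniqueness of the relevant overgroup established earlier.

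Next I would attack the dominant case $(2C,3C,13A)=9658$. The strategy is to pre-compute the fingerprints of all plausible candidate simple groups (Tables~\ref{2313small}, \ref{F22gens2313}, \ref{F42gens2D3C13}) and match them against a large random sample of $(2C,3C,13)$ triples in $G$; any unmatched fingerprint corresponds to a triple generating $G$ itself. Additivity of structure constants then forces the sum of attributions to equal $9658$. Allocating $9498$ to the classes of $G$, $F_4(2)$ and $Fi_{22}$, $28$ to two classes of ${}^3D_4(2)$, $18$ jointly to single classes of $G_2(3), L_2(13), L_3(3)$, and $18$ to $L_2(25)$, leaves exactly $96$, which is precisely absorbed by the three classes of $3^{3+3}{:}L_3(3)$ inside the three copies of $O_7(3)$ (contributing $24$ from subgroups $3^3{:}L_3(3)$ and $72$ from the full extension). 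Uniqueness of each $G.S_3$-class of the simple groups listed is then immediate from the tightness of this accounting, while the three classes of $F_4(2)$ are the ones already recorded in Section~\ref{exist}.

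Finally the maximality claim is almost immediate: $F_4(2)$ is the centralizer in $G$ of a $2D$-involution, hence maximal in $G$ and $G.2$ by Section~\ref{exist}, while each of $G_2(3), L_4(3), {}^2F_4(2)', L_2(25), L_3(3)$ is contained in a strictly larger subgroup on our list ($Fi_{22}$ in the first case, $F_4(2)$ in the other four), so its normalizer cannot be maximal in any extension. The main obstacle is the bookkeeping in the $(2C,3C,13)$ case: a missed conjugacy class or a misidentified fingerprint would leave a residual structure constant and thereby predict a nonexistent extra class. In particular one needs to verify that the two embeddings of $L_3(3)$ inside $L_4(3)$ and inside ${}^2F_4(2)'$ fuse to a single $G.S_3$-class (which follows from the outer automorphism of $F_4(2)$ as already noted), and that the $3^{3+3}{:}L_3(3)$ contribution of $96$ is correctly computed by lifting the $(2,3,13)$ structure constants of $L_3(3)$ through the normal $3^{3+3}$.
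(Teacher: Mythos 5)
Your proposal follows essentially the same route as the paper's Section on $(2,3,13)$ triples: the same four non-zero structure constants, the same fingerprint-based attributions in the three small cases ($3+12=15$, $3$, $36+9+18=63$), the same accounting in the $(2C,3C,13)$ case (9498 from $G$, $F_4(2)$, $Fi_{22}$; $28$ from two classes of ${}^3D_4(2)$; $18+18$ from $G_2(3)$, $L_2(13)$, $L_3(3)$, $L_2(25)$; the residual $96=24+72$ absorbed by the three classes of $3^{3+3}{:}L_3(3)$), and the same containment argument for non-maximality. It is correct and matches the paper's proof in both structure and numerical detail.
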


\section{Classifying $(2,3,17)$ triples}
\label{17triples}
Known subgroups with $(2,3,17)$ generators include
$L_2(16)$, $O_8^-(2)$, $O_{10}^-(2)$, $S_8(2)$,
$L_2(17)$, $F_4(2)$, and $2^8{:}O_8^-(2)$ and
$2^{8+16}{:}O_8^-(2)$.
We precompute the fingerprints in Table~\ref{2317small}.
\begin{table}
\caption{\label{2317small}Some fingerprints of type $(2,3,17)$}
$$\begin{array}{llrrrrrrrrr}
O_{10}^-(2)&2D,3E&17&30&33&15&35&33&18\cr
&&17&12&33&11&18&30&11/21\cr
&&17&17&20&30&6&8&9\cr
O_8^-(2)&2C,3B&17&21&9&9&7&6&8/21\cr
&&17&15&17&12&5&9&17\cr
&&17&21&7&17&10&10&30\cr
&2C,3C&17&8&30&12&8&21&10/17\cr
&&17&10&21&17&21&12&10/17\cr
&&17&15&10&17&21&17&6/10\cr
&&17&17&17&8&15&30&7\cr
&&17&17&30&8&7&6&9\cr
&&17&21&12&17&10&17&17/30\cr
L_2(16)&&17&17&17&5&17&15&17\cr
&&17&15&17&17&17&17&5\cr
L_2(17)&&17&9&9&8&9&9&9\cr
S_8(2)&2D,3D&17&6&30&12&21&12&21\cr
&&17&6&30&24&21&24&30\cr
&2E,3D&17&7&12&24&15&4&17\cr
&&17&8&17&15&18&12&8/21\cr
&&17&9&20&15&6&12&12\cr
&&17&12&12&18&20&30&10/17\cr
&&17&12&12&24&20&10&17/20\cr
&&17&15&20&30&15&15&10/12\cr
&2F,3D&17&6&21&24&12&24&20\cr
&&17&7&30&14&21&12&12/17\cr
&&17&9&14&17&17&15&5/17\cr
&&17&12&12&9&12&30&12\cr
&&17&12&18&14&30&14&10/12\cr
&&17&17&15&18&21&30&12/17\cr
&&17&20&30&17&14&17&17/17\cr
&&17&30&24&12&12&15&4/21\cr
F_4(2) & 2C,3C & 17&6&30&12&24&12&13/16\cr
&&17&6&30&12&28&12&12
\end{array}$$
\end{table}

The structure constants we need to account for are the following:
\begin{itemize}
\item $\xi(2B,3C,17A+B)=13+13=26$;
\item $\xi(2C,3A,17A+B)=3+3=6$;
\item $\xi(2C,3B,17A+B)=35+35=70$;
\item $\xi(2C,3C,17A+B)=7614+7614=15228$.
\end{itemize}

\subsection{Triples inside $F_4(2)$}
We shall also need information about $(2,3,17)$ generating triples for
$F_4(2)$. To determine these, we compute the structure constants in $F_4(2)$
as follows.
\begin{itemize}
\item $\xi(2C,3C,17A+B)=5+5$, of which $2+2$ are $(2D,3D)$ generators for 
two classes of $S_8(2)$ and $3+3$ are generators for $F_4(2)$ whose
fingerprints are given above;
\item $\xi(2D,3A,17A+B)=3+3$, of which $2+2$ are $(2C,3B)$ generators for
one class of $O_8^-(2)$ and $1+1$ are generators for one class of $L_2(16)$;
\item $\xi(2D,3B,17A+B)=3+3$, similarly accounted for by the other
class of $O_8^-(2)$ and of $L_2(16)$;
\item $\xi(2D,3C,17A+B)=420+420$, of which $10+10$ are the other generators
for both classes of $O_8^-(2)$, and $24+24$ are the other generators for
both classes of $S_8(2)$, and $2+2$ are generators for $L_2(17)$.
\end{itemize}
This leaves $3+3$ of type $(2C,3C)$ and $384+384$ of type $(2D,3C)$ all of
which must generate the whole of $F_4(2)$. The fingerprints for the former
are given in Table~\ref{2317small} and for the latter
 in Table~\ref{F42gen2D3C}.

\begin{table}
\caption{\label{F42gen2D3C}$(2D,3C,17)$ generators for $F_4(2)$}
$$\begin{array}{rrrrrrrr}
&17&4&28&17&28&3&10/21\cr
&17&6&9&21&12&21&16/21\cr
&17&6&17&20&21&20&9/20\cr
&17&6&24&21&20&21&14/17\cr
&17&7&18&21&28&8&17/28\cr
&17&7&28&12&15&4&12\cr
&17&8&9&21&30&12&8/18\cr
&17&8&10&16&28&13&12/17\cr
&17&8&13&9&13&17&16/21\cr
&17&8&13&13&16&14&13/16\cr
&17&8&13&18&13&28&8/17\cr
&17&8&13&21&13&21&10/30\cr
&17&8&14&12&17&18&8/18\cr
&17&8&16&17&17&20&13/21\cr
&17&8&16&18&28&20&17/24\cr
&17&8&17&18&21&28&17/24\cr
&17&8&17&24&30&12&12/16\cr
&17&8&21&14&13&21&18/24\cr
&17&8&21&16&30&12&9/12\cr
&17&8&21&21&13&24&12/18\cr
&17&8&21&21&14&21&13/13\cr
&17&8&24&18&13&21&13/16\cr
&17&8&28&17&12&21&21/30\cr
&17&8&30&9&6&24&17\cr
&17&8&30&18&12&21&28/30\cr
&17&9&12&9&20&24&16\cr
&17&9&14&20&12&15&18\cr
&17&9&17&12&24&17&12/24\cr
&17&9&17&28&20&18&17\cr
&17&9&24&28&20&9&28\cr
&17&9&28&16&9&18&30\cr
&17&10&10&13&24&30&21/24\cr
&17&10&12&21&17&8&9/18\cr
&17&10&17&24&16&18&17/28\cr
&17&10&20&28&14&30&18/21\cr
&17&10&21&14&28&16&21/30\cr
&17&10&24&21&18&18&12/21\cr
&17&10&30&12&24&18&14/24\cr
&17&12&8&24&21&12&10\cr
&17&12&9&17&12&12&13/17\cr
&17&12&9&18&17&17&12/13\cr
&17&12&12&10&20&13&12/16\cr
&17&12&12&13&15&17&16/18\cr
&17&12&12&13&21&21&17/24\cr
&17&12&12&17&16&21&8/28
\end{array}\qquad
\begin{array}{rrrrrrrr}
&17&12&12&30&21&13&9/20\cr
&17&12&13&9&8&20&12/24\cr
&17&12&13&13&21&16&16/18\cr
&17&12&13&28&12&18&21/24\cr
&17&12&16&12&28&12&13\cr
&17&12&16&16&9&9&21/24\cr
&17&12&16&20&18&18&13/16\cr
&17&12&17&24&28&21&12/21\cr
&17&12&18&13&20&30&12\cr
&17&12&18&13&30&17&13/14\cr
&17&12&18&16&16&17&12/15\cr
&17&12&18&18&9&17&16/17\cr
&17&12&18&21&12&17&16\cr
&17&12&18&28&21&28&18/18\cr
&17&12&21&13&13&30&15/16\cr
&17&12&21&17&13&17&12/18\cr
&17&12&21&17&24&16&28/30\cr
&17&12&21&18&9&24&18/18\cr
&17&12&24&30&17&12&13/30\cr
&17&12&28&17&17&13&24/24\cr
&17&12&28&18&12&17&20/24\cr
&17&12&28&21&12&24&12/24\cr
&17&12&28&24&28&17&13\cr
&17&12&28&28&9&21&9/13\cr
&17&12&30&17&21&12&12/18\cr
&17&13&18&9&14&21&8/13\cr
&17&13&20&24&18&8&17/21\cr
&17&13&21&12&14&28&12/15\cr
&17&13&21&13&30&10&18/30\cr
&17&14&12&13&16&12&30\cr
&17&14&12&24&12&12&13\cr
&17&14&13&9&12&14&21/28\cr
&17&14&20&24&14&18&12\cr
&17&14&28&24&28&10&18/24\cr
&17&14&30&16&12&14&24\cr
&17&14&30&28&21&24&9/30\cr
&17&15&18&17&9&28&17/24\cr
&17&15&18&17&17&18&9/15\cr
&17&15&18&28&12&21&12/30\cr
&17&15&21&20&18&24&16/24\cr
&17&15&21&20&20&24&21/28\cr
&17&15&30&13&18&13&21/30\cr
&17&16&8&14&21&12&30/30\cr
&17&16&12&9&12&18&9/13\cr
&17&16&14&16&24&21&18/24
\end{array}$$
\end{table}
\begin{table}
$$\begin{array}{rrrrrrrr}
&17&16&14&21&21&24&12/24\cr
&17&16&17&14&14&24&9/10\cr
&17&16&17&17&21&21&15/18\cr
&17&16&20&9&24&9&9/12\cr
&17&16&20&30&21&14&12/28\cr
&17&16&21&12&17&18&18/21\cr
&17&16&21&13&21&16&30/30\cr
&17&16&21&16&28&24&9/17\cr
&17&16&28&20&17&13&6/21\cr
&17&17&6&14&24&24&8\cr
&17&17&8&16&17&20&14\cr
&17&17&12&14&13&12&12\cr
&17&17&21&16&28&14&14\cr
&17&17&21&28&18&8&10/28\cr
&17&17&21&28&24&21&17/18\cr
&17&17&24&16&16&18&24\cr
&17&17&28&12&9&18&24/24\cr
&17&17&28&12&18&16&20/3\cr
&17&18&9&12&13&14&17/21\cr
&17&18&9&20&20&21&12/15\cr
&17&18&12&18&12&17&10/28\cr
&17&18&13&17&18&28&13/17\cr
&17&18&15&17&12&12&14/20\cr
&17&18&17&9&13&14&13/14\cr
&17&18&18&20&12&16&20/24\cr
&17&18&21&12&18&13&12/24\cr
&17&18&24&16&8&8&18/18\cr
&17&20&13&24&18&18&24/30\cr
&17&20&14&18&24&21&17/30\cr
&17&20&15&30&21&9&16/24\cr
&17&20&16&15&17&30&21/24\cr
&17&20&17&16&18&12&30/30\cr
&17&20&18&18&12&13&15/21\cr
&17&20&24&12&28&12&17/18\cr
&17&20&24&17&13&12&13/20\cr
&17&20&28&30&16&28&13/24\cr
&17&20&30&12&18&24&16/20\cr
&17&21&8&30&12&14&17/21\cr
&17&21&9&12&20&17&16/17\cr
&17&21&13&12&12&16&18/24\cr
&17&21&16&24&28&13&17/21\cr
&17&21&17&13&30&28&18/30\cr
&17&21&18&18&16&28&16/28\cr
&17&21&20&17&20&13&10/13\cr
&17&21&20&18&24&24&15/28\cr
&17&21&20&28&21&13&17/30\cr
&17&21&21&8&14&16&12/30
\end{array}\qquad
\begin{array}{rrrrrrrr}
&17&21&21&13&14&12&18/28\cr
&17&21&24&13&24&24&18/30\cr
&17&21&28&24&28&21&20/24\cr
&17&21&30&12&28&18&15/18\cr
&17&21&30&17&13&14&12/16\cr
&17&24&8&20&24&18&18/21\cr
&17&24&8&24&18&20&18/30\cr
&17&24&8&24&18&21&10/18\cr
&17&24&9&30&15&13&17/28\cr
&17&24&10&16&13&9&18/21\cr
&17&24&12&12&12&18&16/17\cr
&17&24&12&16&14&24&12/13\cr
&17&24&12&21&18&8&13/20\cr
&17&24&13&16&16&21&12/20\cr
&17&24&13&24&13&24&12/21\cr
&17&24&13&24&21&16&21/24\cr
&17&24&15&21&13&17&12/12\cr
&17&24&16&12&12&17&21/21\cr
&17&24&16&17&17&18&18/28\cr
&17&24&17&12&24&30&18/21\cr
&17&24&17&13&24&20&13/15\cr
&17&24&17&18&9&18&16/18\cr
&17&24&17&18&12&24&13/18\cr
&17&24&17&28&12&8&13/30\cr
&17&24&17&30&28&12&13/24\cr
&17&24&18&13&21&30&13/16\cr
&17&24&18&13&24&24&13/21\cr
&17&24&18&16&13&20&12/12\cr
&17&24&20&17&17&20&7/12\cr
&17&24&20&21&20&15&12/24\cr
&17&24&20&24&8&24&17/17\cr
&17&24&20&30&24&12&24/30\cr
&17&24&21&13&24&13&24/30\cr
&17&24&21&18&24&17&13/17\cr
&17&24&21&30&21&17&18/20\cr
&17&24&24&21&24&18&12/16\cr
&17&24&28&12&28&24&18/30\cr
&17&24&30&13&12&20&13/20\cr
&17&28&9&28&21&18&17/18\cr
&17&28&12&17&24&20&13/21\cr
&17&28&12&18&30&12&18/24\cr
&17&28&13&13&24&18&12/21\cr
&17&28&13&9&24&21&13/17\cr
&17&28&15&21&30&12&17/24\cr
&17&28&17&13&12&20&18/20\cr
&17&28&20&15&13&24&14/24\cr
&17&28&20&17&24&21&14/24
\end{array}$$\end{table}
\begin{table}
$$\begin{array}{rrrrrrrr}
&17&28&21&16&30&18&20/24\cr
&17&28&30&12&30&12&14/24\cr
&17&30&12&12&24&12&14/18\cr
&17&30&12&21&21&24&17/30\cr
&17&30&12&24&18&17&18/28\cr
&17&30&13&16&18&18&17/21\cr
&17&30&13&20&21&20&17/21\cr
&17&30&13&30&24&12&18/21\cr
&17&30&14&16&14&17&21/24
\end{array}\qquad
\begin{array}{rrrrrrrr}
&17&30&17&18&18&13&16/21\cr
&17&30&18&16&8&12&12/13\cr
&17&30&18&18&12&13&14/18\cr
&17&30&18&24&17&16&15/24\cr
&17&30&21&17&17&9&17/17\cr
&17&30&21&18&17&16&9/21\cr
&17&30&24&12&18&16&10/24\cr
&17&30&28&24&13&16&16/30\cr
&17&30&28&28&16&17&12/17
\end{array}
$$
\end{table}

\subsection{The $(2B,3C,17)$ case}
In the case $(2B,3C)$ we found five fingerprints with $xy$ of order $17$.
The first three are for generators of
$F_4(2)$. The three classes of (self-normalizing) $F_4(2)$ therefore account for
an amount $9+9$ of the structure constant.

The other two cases are generators of type $(2D,3D,17)$ for $S_8(2)$. 
Now $F_4(2)$ contains two
conjugacy classes of $S_8(2)$, which can be distinguished by the class of $7$-elements
they contain. 
In one case, the $S_8(2)$ lies in $O_{10}^-(2)$, so is centralized by a non-inner
automorphism. In the other case there are three conjugacy classes in $G$, fused in $G.3$.
Hence these four classes of $S_8(2)$ account for an amount $2+6=8$ of the
structure constant.  Together these subgroups account for the full
structure constant $26$.

\subsection{The $(2C,3A,17)$ case}
In the case $(2C,3A)$, there are four fingerprints
with $xy$ of order $17$ that generate $O_8^-(2)$. One of the known
classes of $O_8^-(2)$ is centralized by an $S_3$ of outer automorphisms, so each
fingerprint for this group accounts for $1$ of the structure constant, making $4$ altogether.
There are two fingerprints that generate $L_2(16)$. One of the known classes 
of $L_2(16){:}4$ therefore
accounts for the remaining $2$ of the structure constant.

\subsection{The $(2C,3B,17)$ case}
In the case $(2C,3B)$ we found $18$ fingerprints when the order of $xy$ is $17$.
Four are generators of type $(2D,3E)$ for $O_{10}^-(2)$,
four are generators of type $(2C,3B)$ for $O_8^-(2)$, and two
are generators for $L_2(16)$, while the remaining $8$ generate $G$. The
fingerprints for the generators for $G$ are as follows:
$$\begin{array}{rrrrrrrrr}
17&12&33&22&33&22&17/18\cr
17&18&22&12&13&19&13/18\cr
17&28&16&17&16&12&14/24\cr
17&28&20&20&8&18&15/30
\end{array}$$

The generators for $G$ account for an amount $48$ of the 
structure constant. The generators for $O_{10}^-(2)$ account
for an amount $8$. Now there are two classes
of $O_8^-(2)$ in $F_4(2)$, one of which has already been counted in the enumeration
of the $(2C,3A)$ case. The other gives rise to three classes of $O_8^-(2)$
in $G$, each extending to $O_8^-(2){:}2$, and each
centralized by an outer automorphism of order $2$.
Together these account for an amount $12$ of the structure constant.

Finally, the second 
known class of $L_2(16){:}4$
centralizes an outer $S_2$ (but not $S_3$)
and accounts for an amount $2$ of the structure constant.
This accounts for the full amount $48+8+12+2=70$ of the structure constant.

\subsection{The $(2C,3C,17)$ case}
The $(2C,3C)$ case is considerably harder to analyse.
The subgroups which turn out to be generated in this way are
$L_2(17)$, $O_8^-(2)$, $S_8(2)$, $F_4(2)$, as well as $2^8{:}O_8^-(2)$
and $2^{8+16}{:}O_8^-(2)$. The latter two $2$-local subgroups have many
classes of $(2,3,17)$ generating triples, and it is hard to distinguish many of them
using fingerprints of the type we have been using. We found only $86$
distinct fingerprints, but it would appear that the number of distinct
(i.e. non-automorphic) such triples (using an element of $O_8^-(2)$ class  $3C$)
is $200$ in the case of $2^{8+16}{:}O_8^-(2)$, and $30$ in the case
of $2^8{:}O_8^-(2)$.

To prove this, observe that the $2C$ elements in $O_8^-(2)$ centralize
$2^4$ of the $2^8$, while the $3C$ elements centralize just $2^2$. Hence each
$(2C,3C,17)$ triple in $O_8^-(2)$ lifts to four $(2,3,17)$ triples in
$2^8{:}O_8^-(2)$, of which three generate the whole group. Similarly, lifting to
$2^{8+16}{:}O_8^-(2)$ we acquire a factor of $2^6$, so each triple for $O_8^-(2)$
lifts to $64$ triples, of which $60$ generate the whole group. However, there is
an outer automorphism of order $3$ which effectively reduces this number to $20$.
We have therefore $1200$ generating triples, falling into $200$ types under the action
of the outer automorphism group $S_3$. Therefore this contributes $600$ to
each of the two structure constants. Similarly for $2^8{:}O_8^-(2)$ we have
$60$ generating triples, each centralized by an outer automorphism of order $3$,
and swapped in pairs by the outer automorphism of order $2$, so this group contributes
$30$ to each structure constant of $7614$. (Running total so far $630$.)

Now the fingerprint collection found in total $1919$ distinct fingerprints of type
$(2C,2C,17A/B)$ for generators of ${}^2E_6(2)$. However, it is not possible to have an odd number of such fingerprints, as there are equal numbers with $xy$ in $17A$
and in $17B$. (This argument depends crucially on the fact that 
$17A$ and $17B$ are not fused by the outer automorphism group.)
Therefore there must be at least $1920$ such generating triples,
up to automorphisms. These account for an amount
$3\times1920=5760$ of each structure constant. (Running total $6390$.)

Next, we have shown that $F_4(2)$ has $384$ generating
triples of this type, for each of the two classes of $17$-elements. Hence the three
classes of $F_4(2)$ together contribute $3\times 384=1152$ to each structure constant.
(Running total $7542$.)

For each of the groups $L_2(17)$, $O_8^-(2)$ and $S_8(2)$, there are two known
classes in ${}^2E_6(2){:}S_3$, one centralized by an outer $S_3$, the
other centralized by only an outer $2$. The counting is slightly different in each case.
In the case of $L_2(17)$, the outer automorphism of $L_2(17)$ is not realised,
and therefore the unique fingerprint contributes a total of $1+3=4$ to each structure constant.
(Running total $7546$.) In the case of $S_8(2)$, there is no outer automorphism,
and therefore each fingerprint (of which there are $24$) contributes just $2$
to each structure constant. (Running total $7594$.)
Finally, in the case of $O_8^-(2)$ the outer automorphism is realised inside ${}^2E_6(2)$, 
so each
of the $10$ fingerprints again contributes $2$. (Total so far $7614$, of an expected
$7614$.)

The analysis of $(2,3,17)$ triples in this section proves that there is no
subgroup isomorphic to one of $L_2(16)$, $L_2(17)$, $O_8^-(2)$, $O_{10}^-(2)$ or 
$S_8(2)$ 
other than those contained in known maximal subgroups.

\section{Classifying $(2,3,19)$ triples}
\label{19triples}
Analysis of these triples proves only one thing: that there is a unique conjugacy
class of $U_3(8)$. However, there does not seem to be any easier way to prove
this result.
The structure constants that we need to account for are as follows:
\begin{itemize}
\item $\xi(2B,3C,19A+B)=9+9=18$;
\item $\xi(2C,3B,19A+B)=15+15=30$;
\item $\xi(2C,3C,19A+B)=6126+6126=12252$.
\end{itemize}
The expected numbers of fingerprints therefore are respectively $3$, $5$ and $2042$.

In the case $(2B,3C)$ we find three fingerprints, all for triples that generate $G$,
as follows:
$$\begin{array}{rrrrrrr}
19&5&16&28&28&19&24\cr
19&5&30&12&12&19&19\cr
19&5&33&30&15&19&10
\end{array}$$
In the case $(2C,3B)$, similarly, we find five fingerprints,
all for triples that generate $G$, as follows:
$$\begin{array}{rrrrrrr}
19&9&30&13&13&19&8/16\cr
19&18&12&13&10&30&24/30\cr
19&21&24&21&16&19&30
\end{array}$$

In the $(2C,3C)$ case when $xy$ has order $19$, we found $2041$ distinct fingerprints.
Since we expecting $2042$, we looked morely closely, and found that one
fingerprint which appeared to be from a self-reciprocal generating set was in fact
from a pair of mutually reciprocal generating sets, but the fingerprint was not
sufficiently discriminating to pick this up.
Of these $2042$ distinct types of triples, $2041$ generate $G$, so contribute $3$ to each structure constant,
making a total contribution of $6123$ to the structure constant of 6126.
The remaining fingerprint corresponds to subgroups $U_3(8)$. Now the known
subgroup $U_3(8)$ has normalizer $(U_3(8){:}3 \times 3)){:}2$ in $G.S_3$, so
contributes $3$ to each structure constant.
This fully accounts for the structure constant, and proves that there is a
unique class of $U_3(8)$ in ${}^2E_6(2)$.

\section{Status report}
\label{status}
Using computational analysis of $(2,3,n)$ triples for $n=7,11,13,17,19$
we have robust classifications for simple subgroups of the following
isomorphism types
\begin{itemize}
\item $L_2(8)$, $L_2(13)$, ${}^3D_4(2)$, $Fi_{22}$;
\item $L_2(11)$, $M_{12}$, $A_{11}$, $A_{12}$, $O_{10}^-(2)$, $U_6(2)$;
\item $L_2(25)$, $L_3(3)$, $L_4(3)$, $G_2(3)$, ${}^2F_4(2)'$;
\item $L_2(16)$, $L_2(17)$, $O_8^-(2)$, $S_8(2)$, $F_4(2)$;
\item $U_3(8)$;
\end{itemize}
as well as incomplete classifications of $A_5$
and $L_3(2)$
which are nevertheless sufficient for the purposes of
determining maximal subgroups. We can therefore say that we have
 dealt with $23$ of the $39$ cases. The remaining $16$ cases
require other methods, more group theoretic than
character theoretic.

The groups we need to deal with are
\begin{itemize}
\item $A_6$, $A_7$, $A_8$, $A_9$, $A_{10}$;
\item $L_3(4)$, $U_3(3)$, $U_4(2)$, $U_4(3)$, $U_5(2)$;
\item $O_7(3)$, $O_8^+(2)$, $S_4(4)$, $S_6(2)$;
\item $M_{11}$, $M_{22}$.
\end{itemize}
Most of these contain $A_5$, and therefore can be analyzed
using Norton's classification \cite{Anatomy1} of subgroups of the Monster
containing $5A$-type $A_5$. 
To ensure that Norton's methods carry through, we need to be
sure that in every case where the simple group $H$ he is interested
in has a double cover $2\udot H$, he is using a copy of $A_5$ that is
doubly covered in $2\udot H$. This is the case for all the alternating
groups, and $U_4(2)$, and therefore $S_6(2)$, $O_8^+(2)$ and $O_7(3)$;
but not for $L_3(4)$, $U_4(3)$ or $M_{22}$.

Hence the cases that require other methods
are $U_3(3)$, which does not contain $A_5$, and $L_3(4)$,
$M_{22}$ and $U_4(3)$, all of which have proper double
covers in which all involutions lift to involutions.
For all except $L_3(4)$, it is sufficient to use
known results on subgroups of the Baby Monster.
The case $L_3(4)$ is problematic, since there is 
a subgroup $2^2\udot L_3(4)$ in $2^2\udot{}^2E_6(2)$,
and there may potentially be more than one conjugacy class,
and such groups will not be detected by existing work
on the Monster and Baby Monster.
In this last case we need to extend Norton's methods to
include classifications of $2^2\udot L_3(4)$ and $2\times 2\udot L_3(4)$ 
in the Monster, as
well as the simple group $L_3(4)$. 

\section{Using the Monster}
\label{Monster}
The fact that the Monster contains a subgroup $2^2\udot G{:}S_3$ means that
what is known about subgroups of the Monster implies facts about subgroups of $G$.
Of particular interest is Norton's list \cite{Anatomy1}
of simple subgroups of the Monster containing $A_5$ with $5A$-elements. Now all
elements of order $5$ in $G$ fuse to $5A$ in the Monster, and all involutions in
$G$ lift to involutions in $2^2\udot G$, so this effectively deals with almost all
simple subgroups of $G$ which contain $A_5$. The ones which require
extra care are those which have a double cover in which all
involutions lift to involutions. There is no such group on Norton's list, but 
in principle we need to
consider the following possibilities:
\begin{itemize}
\item $L_3(4)$, $M_{22}$, $U_4(3)$, $U_6(2)$, $Fi_{22}$, $F_4(2)$.
\end{itemize}
We shall not consider the cases $U_6(2)$, $Fi_{22}$ and $F_4(2)$, however,
as these have been adequately dealt with by other methods.

In this section we go through the whole of Norton's list, including the
groups we have already dealt with. This is partly in order to provide 
alternative, computer-free, proofs in many cases, but also to demonstrate the
reliability of Norton's work, which is published 
essentially without proof.

\subsection{Six very easy cases}
From Norton's list we pick out first the following,
which are the simple groups that centralize a unique class of $2^2$
in the Monster. In each case we write down the subgroup of
the Monster that is the direct product of the simple group under consideration
with its centralizer (which Norton calls its Monstralizer).

\begin{itemize}
\item $O_{10}^-(2) \times A_4$
\item $O_7(3)\times S_3\times S_3$
\item $A_{12}\times A_5$
\item $A_{11}\times A_5$
\item ${}^2F_4(2)'\times 2\udot S_4$
\item $L_2(25)\times 2\udot S_4$
\end{itemize}
It follows that there is a unique class of the corresponding simple group in $G.S_3$.
In the cases $H\cong O_{10}^-(2)$, $A_{12}$ and $A_{11}$ the centralizer contains $A_4$,
and the normalizers in $G.S_3$ are of the shape $(H\times 3){:}2$. In particular,
there is a single conjugacy class of $H$ in $G$.

In the case $O_7(3)$, the normalizer in $G.S_3$ is $O_7(3){:}2$, and there
are three conjugacy classes of $O_7(3)$ in $G$. 
These are not maximal in $G$, since they are contained in $Fi_{22}$.
However, in $G.2$ the normalizer is $O_7(3){:}2$, which is not contained in
$Fi_{22}{:}2$, so both groups are maximal.

In the cases
$H\cong L_2(25)$ and ${}^2F_4(2)'$, the centralizer is $2\udot S_4$, containing
a unique class of $2^2$. On factoring out by this $2^2$, an extra centralizing
involution appears, and in each case we have normalizer in $G.S_3$ of shape
$2\times H.2$. Again we obtain three conjugacy classes of $H$ in $G$.

For our proof we only require the case $O_7(3)$ and summarize as follows.
\begin{theorem} There is a unique class of $O_7(3)$ in $G.S_3$.
The normalizer is $O_7(3){:}2$, and the class splits into three classes in $G$.
\end{theorem}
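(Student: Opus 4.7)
My plan is to apply Norton's classification \cite{Anatomy1} of simple subgroups of the Monster $\mathbb{M}$ containing a $5A$-type $A_5$, via the embedding $2^2{\cdot}G{:}S_3\hookrightarrow\mathbb{M}$. First I would verify the hypotheses for $O_7(3)$: it contains $A_5$ (inside its natural $A_7$-subgroup), every element of order $5$ in $G$ lies in Monster class $5A$ by Section~\ref{cent5}, and $O_7(3)$ is not among the three problematic isomorphism types $L_3(4)$, $M_{22}$, $U_4(3)$ identified in Section~\ref{status} as requiring extra care. Indeed the double cover $2{\cdot}O_7(3)$ contains $2{\cdot}A_5 = SL_2(5)$, so the relevant $A_5$ lifts to a genuine double cover and Norton's method applies without modification.

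The entry for $O_7(3)$ in Norton's list then exhibits the Monstralizer as a direct product $O_7(3)\times S_3\times S_3$, forming the unique $\mathbb{M}$-conjugacy class, which immediately yields a unique class in $G.S_3$. To extract the normalizer, I would intersect this structure with $2^2{\cdot}G{:}S_3$: the central $2^2$ of the latter centralizes $O_7(3)$, so sits inside the Monstralizer. Since $S_3$ has no $(\mathbb{Z}/2)^2$ subgroup, this $2^2$ must be the diagonal $\langle(a,1),(1,b)\rangle$ with $a,b$ involutions in the respective factors, and such a $2^2$ is self-normalizing in $S_3\times S_3$. Hence $C_{G.S_3}(O_7(3))=1$, while the outer automorphism of $O_7(3)$ supplied by the normalizer-over-centralizer quotient in $\mathbb{M}$ (an extension of order $2$) provides the $:2$, giving $N_{G.S_3}(O_7(3))=O_7(3){:}2$.

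Finally, the three $G$-classes follow from a standard index count: $[G.S_3:G\cdot N_{G.S_3}(O_7(3))]=3$, since the outer $S_3$ of $G$ acts faithfully and transitively on the $G$-classes of $O_7(3)$, with a single outer involution (the one realising the outer automorphism of $O_7(3)$) as the stabilizer of one such class. The main obstacle is the identification step: confirming that the central $2^2$ of $2^2{\cdot}G{:}S_3$ sits correctly inside the Monstralizer $S_3\times S_3$, which requires knowing the Monster class of this $2^2$ (namely $2A$-pure) and then ruling out the alternative that the realised outer involution of $O_7(3)$ lies in $G$ itself rather than in $G.S_3\setminus G$, as otherwise the class would not split into three $G$-classes.
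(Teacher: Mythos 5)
Your core argument is the paper's own: read off Norton's Monstralizer pair $O_7(3)\times S_3\times S_3$, note that the centralizer $S_3\times S_3$ contains a unique class of ($2A$-pure) $2^2$ and that this $2^2$ is self-normalizing there, and deduce a unique class in $G.S_3$ with trivial centralizer --- exactly the ``six very easy cases'' treatment in Section~\ref{Monster}. Your handling of the double-cover caveat also matches Section~\ref{status}, though strictly the condition is that the particular $5A$-type $A_5$ Norton uses is doubly covered in $2\udot O_7(3)$, not merely that $2\udot O_7(3)$ contains some copy of $2\udot A_5$; the paper deduces this via the chain $A_5<U_4(2)<O_7(3)$.

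The genuine gap is the one you flag at the end but do not close: to obtain ``splits into three classes in $G$'' you need $N_G(O_7(3))=O_7(3)$, i.e.\ that the involution inducing the outer automorphism of $O_7(3)$ lies outside $G$. In the Monster picture this comes down to whether an element of $N_{\MM}(O_7(3))$ inducing the outer automorphism can be chosen to centralize the $2^2$ (its image would then lie in $G$, since $G$ is the image of $C_{\MM}(2^2)$, and the unique $G.S_3$-class would break into six $G$-classes) or only to normalize it, say by swapping the two $S_3$ factors (then it maps to an outer involution and one gets three classes). The Monstralizer pair alone, as you use it, does not distinguish these two scenarios --- both are compatible with a unique $G.S_3$-class and abstract normalizer $O_7(3){:}2$ --- and your index count $[G.S_3:G\cdot N]=3$ presupposes the answer. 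The paper does not extract this from the Monster at all: it takes the three $G$-classes extending to $O_7(3){:}2$ from Fischer's subgroups in Section~\ref{exist}, or equivalently from $O_7(3)<Fi_{22}$ with the three classes of $Fi_{22}$ fused in $G.3$ together with the observation that $O_7(3)$ lies in no $F_4(2)$ and in no $3$-element centralizer, so its centralizer in $G.S_3$ is trivial (Section~\ref{F22sub}); the Monster argument supplies only the uniqueness. To complete your version you must either cite that prior information or pin down from Norton's data the precise structure of $N_{\MM}(O_7(3))$, in particular how the outer part acts on the two $S_3$ factors.
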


\subsection{Six more easy cases}
Next we pick from Norton's list the following five groups, whose centralizers
in the Monster
contain two distinct classes of $2^2$:
\begin{itemize}
\item $S_8(2)\times S_4$
\item $O_8^-(2)\times S_4$
\item $O_8^+(2)\times (3\times A_4).2$
\item $A_{10}\times S_5$
\item $A_9\times (3\times A_5).2$
\end{itemize}
In all these cases, one of the two classes of $2^2$ in the centralizer
is normalized to $S_4$, the other only to $D_8$. Thus we obtain two classes of $H$ in
$G.S_3$, one of which
splits into three classes in $G$.
It is easy to see that the normalizers are as given in
Table~\ref{simpleslarge}, \ref{simples7} or \ref{simples5}.

The case $S_6(2)$, with centralizer $S_4\times S_3$, is similar.
Note that all involutions in $S_4\times S_3$ are in the Monster class $2A$, except for
those of shape $(12)(34)(ab)$, which are in $2B$. There are therefore three types of
$2A$-pure $2^2$, and hence three conjugacy classes of $S_6(2)$ in $G.S_3$.
In $G$, the normalizers are as follows:
\begin{itemize}
\item one class of $S_3\times S_6(2)$, of type $7A$;
\item three classes of $S_3\times S_6(2)$, of type $7B$;
\item six classes of $2\times S_6(2)$, of type $7B$.
\end{itemize}
The normalizers in $G.S_3$ are therefore 
\begin{itemize}
\item $S_3\times S_3\times S_6(2)$, contained in $(3\times O_{10}^-(2)){:}2$;
\item $2\times S_3\times S_6(2)$, contained in $2\times F_4(2)$;
\item $2\times S_6(2)$, contained in $N(2A)$.
\end{itemize}
We remark that the remaining lifts of $S_6(2)$ in $2^{1+10}{:}U_6(2)$ must therefore be
$2\udot S_6(2)$, in three conjugacy classes.

For our proof we need the cases $O_8^+(2)$, $S_6(2)$, $A_{10}$ and $A_9$.

\subsection{Three quite easy cases}
Now consider the following cases, where again there are two
types of $2^2$ in the centralizer in the Monster.
\begin{itemize}
\item $M_{12}\times L_2(11)$
\item $S_4(4)\times L_3(2)$
\item $L_2(16)\times L_3(2)$
\end{itemize}
Here both classes of $2^2$ in the centralizer 
extend to $A_4$. In the case of $M_{12}$, there is an element of the Monster
extending the group to $(M_{12}\times L_2(11)){:}2$. This swaps the two classes
of $2^2$, while also effecting the outer automorphism of $M_{12}$. It follows that
there is a single class of $M_{12}$ in $G.S_3$, with normalizer $3\times M_{12}$,
and this class splits into two classes in $G$.

Similarly in the second case we have $(S_4(4).2 \times L_3(2)).2$. Hence there is
a unique class of $S_4(4)$ in $G$, with normalizer $S_4(4).2\times S_3$ in $G.S_3$,
contained in $(3\times O_{10}^-(2)){:}2$.

In the third case we have $L_2(16){:}4 \times L_3(2)$ instead, so there are
two conjugacy classes of $L_2(16)$ in $G.S_3$, with normalizer $L_2(16){:}4\times S_3$
in each case.

For our proof, we need only the case $S_4(4)$, and summarize as follows.
\begin{theorem}
There is a unique class of $S_4(4)$ in $G.S_3$, and the normalizer is
$S_3\times S_4(4){:}2$, contained in $(3\times O_{10}^-(2)){:}2$.
The class remains a single class in $G$.
\end{theorem}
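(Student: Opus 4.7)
The plan is to apply the same Monster strategy as in the preceding two subsections. By Norton's classification \cite{Anatomy1}, there is a unique class of $S_4(4)$ in the Monster whose Monstralizer is $L_3(2)$, and the direct product $S_4(4)\times L_3(2)$ extends in the Monster to $(S_4(4){:}2\times L_3(2)){:}2$. Picking a $2^2\le L_3(2)$ (in the centralizer of the $S_4(4)$) and factoring out, the quotient of its normalizer in the Monstralizer becomes the normalizer of the corresponding $S_4(4)$ inside $G.S_3$.

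The key input is the structure of $2^2$ subgroups of $L_3(2)$: there are two conjugacy classes of four-groups in $L_3(2)$, but they are interchanged by the outer automorphism of $L_3(2)$. Because the extending involution in $(S_4(4){:}2\times L_3(2)){:}2$ acts non-trivially on the $L_3(2)$ factor (realizing its outer automorphism simultaneously with that of $S_4(4)$), the two classes are fused, so there is a unique class of $S_4(4)$ in $G.S_3$ arising this way. The normalizer of any $2^2$ in $L_3(2)$ is $S_4$, with quotient $S_3$, giving normalizer $S_3\times S_4(4){:}2$ of $S_4(4)$ in $G.S_3$.

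Since the $S_3$ complement accounts for the \emph{full} outer automorphism group of $G$, restricting from $G.S_3$ to $G$ cannot split this conjugacy class: a single class remains in $G$. To place this normalizer inside a maximal subgroup, observe that $S_4(4)$ contains elements of order $17$, and by Section~\ref{pne3} every $17$-local subgroup of $G.S_3$ lies in $(3\times O_{10}^-(2)){:}2$; hence so does $S_3\times S_4(4){:}2$.

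The main potential obstacle is verifying that the two classes of $2^2$ in $L_3(2)$ really are fused by the extending involution, rather than each being self-normalized (which would produce two classes of $S_4(4)$ in $G.S_3$). This is however guaranteed by the very structure $(S_4(4){:}2\times L_3(2)){:}2$ coming from Norton's list, since the outer $2$ acts diagonally on the two direct factors and its restriction to $L_3(2)$ must be an outer automorphism (the inner automorphism group already being present in $L_3(2)$ itself), and the outer automorphism of $L_3(2)$ is well known to interchange the two classes of four-groups.
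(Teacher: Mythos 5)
Your proposal follows essentially the same route as the paper: the same Monstralizer pair $S_4(4)\times L_3(2)$ from Norton's list, the same extension $(S_4(4){:}2\times L_3(2)){:}2$, the observation that the outer involution induces the graph automorphism of $L_3(2)$ and hence fuses its two classes of four-groups (giving a single class in $G.S_3$), and $N_{L_3(2)}(2^2)=S_4$ to obtain the normalizer $S_3\times S_4(4){:}2$; the paper states all of this more tersely, and your extra remarks on why the class does not split in $G$ are a welcome amplification of what the paper leaves implicit. One secondary step is a non sequitur as written: from ``every $17$-local subgroup of $G.S_3$ lies in $(3\times O_{10}^-(2)){:}2$'' you conclude that $S_3\times S_4(4){:}2$ does too, but that group is not $17$-local, so the implication does not follow. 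It is easily repaired: the element of order $3$ in the direct factor $S_3$ covers an outer automorphism of $G$ (exactly the point you use for the non-splitting claim), and since its centralizer contains $S_4(4)$, whose order is divisible by $17$, it must lie in class $3D$ (the centralizers of $3E$, $3F$, $3G$ listed in Section~\ref{cenout} have order coprime to $17$); the whole group $S_3\times S_4(4){:}2$ normalizes this subgroup of order $3$, hence lies in $N(3D)=(3\times O_{10}^-(2)){:}2$, which is in effect how the paper itself places the known class in Subsection~\ref{C3D}.
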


\subsection{Five tougher groups}
\begin{itemize}
\item $A_7 \times (A_5\times A_5).2.2$
\item $A_8 \times (A_5\times A_4).2$
\item $U_5(2) \times S_3\times A_4$
\item $L_4(3) \times 3^2{:}D_8$
\item $M_{11}\times S_6.2$
\item $M_{11}\times L_2(11)$
\end{itemize}
We do not need the case $L_4(3)$, where we see two classes of $2^2$ in the
centralizer. These are swapped by an outer automorphism of $L_4(3)$
realised in the Monster. Hence there is a unique class of $L_4(3)$ in $G.S_3$.

The $U_5(2)$ centralizer is $S_3\times A_4$. Embedding the latter in $11\times M_{12}$, we see
that the only Monster $2A$-elements are in the $A_4$. Hence there is a unique class of $U_5(2)$
in $G.S_3$, and any $U_5(2)$ centralizes an element of order $3$ in $G$.
It follows that it lies in $S_3\times U_6(2)$, as does its normalizer in $G$.
 
 In the second $M_{11}$ case, the
centralizer in the Monster is $L_2(11)$, whose centralizer is $M_{12}$. The same argument
as for $M_{12}$, therefore, shows that there are just two classes of $M_{11}$
of this type
in $G.S_3$, each with normalizer $3\times M_{11}$, contained in $3\times M_{12}$.

In the other $M_{11}$ case, the Monstralizer is $S_6.2$, contained in $11\times M_{12}$.
But the involutions in the $A_6$ lie in $M_{12}$-class $2B$, and therefore class $2B$ in
the Monster. Hence there is no pure $2A^2$ subgroup in $S_6.2$, so this case
does not arise in $G$.

We next take the group $A_8$ with centralizer $(A_5\times A_4){:}2$
in the Monster. From the embedding of the latter in $A_{12}$ we see that the only $2A$-elements
are in one of the factors $A_4$ or $A_5$. Hence there are exactly two classes of $A_8$
in $G$, and the normalizers in $G$ are respectively $(A_8\times A_5){:}2$ and
$(A_8\times A_4){:}2$. Both groups lie inside $2^{8}{:}O_8^-(2)$ in
$O_{10}^-(2)$.

Finally we take the $A_7$ with centralizer $(A_5\times A_5).2.2$. In the latter
group, the elements of Monster class $2A$ either lie in one of the two $A_5$ factors,
or swap the two factors. Hence there is a unique class of pure $2A$-type $2^2$.
Therefore there is a unique class of $A_7$ in $G$, and the normalizer is
$(A_7\times A_5){:}2$.

From this list we need the cases $A_7$, $A_8$, $M_{11}$, and $U_5(2)$,
and summarize as follows:
\begin{theorem}
\begin{itemize}
\item In $G.S_3$ there is a unique class of $A_7$; each $A_7$ has 
normalizer $(A_7\times (A_5\times 3){:}2){:}2$;
\item in $G.S_3$ there is a unique class of $U_5(2)$, with normalizer $S_3\times (3\times U_5(2)){:}2$;
\item in $G.S_3$ there are two classes of $M_{11}$, each with normalizer $3\times M_{11}$;
\item in $G.S_3$ there are two classes of $A_8$; in one case each $A_8$ has normalizer
$(A_8\times (A_5\times 3){:}2){:}2$, and in the other $(A_8\times (A_4\times 3){:}2){:}2$. 
\end{itemize}
\end{theorem}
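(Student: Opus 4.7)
The plan is to exploit the embedding $2^2\udot G{:}S_3 < \MM$ together with Norton's enumeration of simple subgroups of the Monster containing a $5A$-type $A_5$. Each of the four groups in the statement contains such an $A_5$, and since every involution of $G$ lifts to an involution in $2^2\udot G$, the $G.S_3$-conjugacy classes of a given simple $H$ correspond bijectively to the $G.S_3$-orbits on pure $2A$-type four-groups inside the Monstralizer $C_\MM(H)$. Once each orbit is located, the normalizer of $H$ in $G.S_3$ is recovered as $N_\MM(H)$ modulo the central $2^2$ defining the embedding of $2^2\udot G$ in $\MM$.

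For $U_5(2)$ the Monstralizer is $S_3 \times A_4$, sitting inside the $11 \times M_{12}$ Monstralizer of a suitable $L_2(11)$. Only the $A_4$ factor contains $2A$-involutions, so there is a single $2A$-pure four-group and hence a single $G.S_3$-class of $U_5(2)$. Since the surviving $S_3$ factor lies in $G$, such a $U_5(2)$ centralizes a $3F$-element and its normalizer is already visible as $S_3\times (3\times U_5(2)){:}2$ inside $N(3F)$.

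The two $M_{11}$ cases correspond to Monstralizers $L_2(11)$ and $S_6.2$. For the $L_2(11)$ Monstralizer the $M_{11}$ sits inside $L_2(11) \times M_{12}$; the Monstralizer contains two classes of four-groups, and since $M_{11}$ has no outer automorphism the Monster cannot fuse them, so we obtain two $G.S_3$-classes, each with normalizer $3\times M_{11}$ contained in $3\times M_{12}$. For the $S_6.2$ Monstralizer the involutions of the contained $A_6$ are the permutations of cycle type $(12)(34)$, which lie in $M_{12}$-class $2B$ and hence Monster class $2B$; no pure $2A$ four-group exists, so this case produces no $M_{11}$ subgroup of the required sort. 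Verifying this class fusion precisely is the most delicate step, because one must account for every conjugacy class of involution in $S_6.2$ and chase the fusion through $M_{12}$ up to the Monster.

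For $A_7$ and $A_8$ the Monstralizers are $(A_5\times A_5).2.2$ and $(A_5\times A_4){:}2$ respectively. Embedding each inside an appropriate $A_{12}$ lets one read off the Monster classes of all involutions. In the $A_7$ case the $2A$-elements either lie inside one of the two $A_5$ factors or lie in the swap coset, and all these possibilities are fused by the normalizer of the Monstralizer, giving a single class of $A_7$ with normalizer $(A_7\times (A_5\times 3){:}2){:}2$ after factoring out the central $2^2$. In the $A_8$ case the $2A$-elements lie purely in one factor (either the $A_5$ or the $A_4$) but never diagonally, and the two factor choices are not fused by any outer element; this yields the two predicted $G.S_3$-classes of $A_8$, whose normalizers differ according to whether the quotient of the Monstralizer contributing to the centralizer is $(A_5\times 3){:}2$ or $(A_4\times 3){:}2$. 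Both $A_8$ types can independently be located inside the parabolic $2^{8+16}{:}(O_8^-(2)\times 3){:}2$ via the two natural $A_8$ subgroups of $O_8^-(2)$, which gives an existence check independent of the Monster embedding.
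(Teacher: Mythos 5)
Your proposal is essentially the paper's own argument: it works through Norton's Monstralizer list, determining the pure $2A$-type four-groups in the centralizers $(A_5\times A_5).2.2$, $(A_5\times A_4).2$, $S_3\times A_4$ (embedded in $11\times M_{12}$), $S_6.2$ and $L_2(11)$, and reads off the classes and normalizers exactly as the paper does, including the exclusion of the $S_6.2$-type $M_{11}$ via the $2B$-fusion of the $A_6$-involutions and the $M_{12}$-style argument giving two classes of $M_{11}$. The only differences are cosmetic (your optional existence check for the two $A_8$ classes inside the parabolic, and a slightly loose phrase about swap-coset involutions being ``fused'' in the $A_7$ case, where in fact no pure $2A$ four-group meets the swap coset), so the proof matches the paper's.
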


\subsection{The last two}
The remaining items on Norton's list are:
\begin{itemize}
\item $A_6 \times (A_6\times A_6).2.2$
\item $A_6 \times 2.L_3(4).2$
\item $A_6 \times M_{11}$
\item $U_4(2) \times (A_4\times S_3\times S_3).2$
\end{itemize}
In the cases $A_6$ and $U_4(2)$, we have some difficulty in
getting the complete list of conjugacy classes.
However, it is quite straightforward to show that there is
no maximal subgroup which is the normalizer of an $A_6$ or $U_4(2)$.

For example, every $2^2$ in $(A_4\times S_3\times S_3).2$
centralizes a further involution, and therefore every $U_4(2)$ in $G$ centralizes an involution.
Similarly, it is easy to see that in the first two $A_6$ cases in the list,
the centralizer of any $2^2$ in the $A_6$-centralizer is larger than the $2^2$ itself.

In the third $A_6$ case, the centralizer is $M_{11}$, which contains a unique
conjugacy class of $2^2$,
whose normalizer is $S_4$. 
It follows that there is a unique class of such $A_6$ in $G.S_3$,
centralizing an $S_3$ of outer automorphisms. Its normalizer therefore
lies in the normalizer of $O_{10}^-(2)$.

\subsection{Conclusion}
In this section we have dealt with the twelve cases $O_7(3)$, $O_8^+(2)$,
$A_{10}$, $A_9$, $S_4(4)$, $S_6(2)$, $U_5(2)$, $U_4(2)$, $M_{11}$,
$A_6$, $A_7$, $A_8$. This leaves just the four cases $U_3(3)$, $M_{22}$,
$U_4(3)$ and $L_3(4)$, where we use properties of the Baby Monster as well.

\section{Using the Baby Monster}
\label{Baby}
\subsection{The case $M_{22}$}
It is shown in \cite{moreB} that there is a unique class of $M_{22}$ containing $5A$-elements
in the Baby Monster. The normalizer is $S_5\times M_{22}{:}2$. Only the transpositions
in $S_5$ fuse to class $2A$ in the Monster. Hence every $M_{22}$ in $G$ centralizes $S_3$
in $G.2$, so centralizes a $3A$ element. It follows that
there are three conjugacy classes of $S_3\times M_{22}$ in $G$, 
lying inside $S_3\times U_6(2)$, and extending to a single class of
$S_3\times M_{22}{:}2$ in $G.S_3$.

\subsection{The case $U_3(3)$}
It is shown in \cite{maxB} that every $U_3(3)$ in the Baby Monster is conjugate in the
Monster to the one 
with centralizer 
$(2^2\times 3^2{:}Q_8){:}S_3$ in the Monster.
This centralizer contains four classes of involutions, with centralizers 
$(2^2\times 3^2{:}Q_8){:}2$, $(2^2\times Q_8){:}S_3$, $2\times SD_{16}$, and
$2^2\times S_3$ respectively. The first must be of Monster type $2A$, since it
gives rise to the subgroup $3^2{:}Q_8\times U_3(3){:}2$ of $G$. 

In any case, no $2^2$ is self-centralizing in $(2^2\times 3^2{:}Q_8){:}S_3$, and
indeed every such $2^2$ centralizes at least a group $2^3$, so every
$U_3(3)$ in $G$ centralizes an involution.

\subsection{The case $U_4(3)$}
Every $U_4(3)$ in $G$ must lift to $2\times 2\udot U_4(3)$ in $2^2\udot G$, and
therefore lifts to $2\times U_4(3)$ in one of the three copies of the Baby Monster,
containing one of the three copies of $2\udot G$. Now it is shown in \cite[Theorem 11.3]{moreB}
that any $U_4(3)$ in the Baby Monster has non-trivial centralizer.
Looking at the proof in more detail, we see that any $2\udot U_4(3)$ in the Monster
has centralizer which is the intersection of two copies of $2\times S_6$ in
$2\udot L_3(4){:}2_2$. But the action has rank $3$ and it is easy to see that
the intersections are $2\times 3^2{:}D_8$ and $2^2\times D_{8}$. In particular,
every $2^2$ in either of these possibilities centralizes a further involution. Hence
every $U_4(3)$ in $G$ centralizes an involution.

\subsection{The case $L_3(4)$}
This last case is problematical because there is an embedding of $2^2\udot L_3(4)$ in
$2^2\udot G$, and hence the enumeration of subgroups $L_3(4)$ in the Baby Monster
and the Monster is not in itself sufficient to deal with the problem. However, we can modify the
argument used in \cite[Theorem 11.2]{moreB}.
Note however that although each of the groups 
$L_3(4)$ and $2\udot L_3(4)$
can be generated by two copies of $A_6$ intersecting in $3^2{:}4$, such that
$3^2{:}Q_8$ interchanges these two copies of $A_6$,
this is no longer true in $2^2\udot L_3(4)$, where the intersection is
only $3^2{:}2$. 

Now there are three different types of $A_6$ that need to be considered. The first 
has centralizer $(A_6\times A_6).2.2$ in the Monster, and contains elements of
Monster class $2A$. These necessarily map to $2A$ elements in $G$. Now the only
non-zero structure constants of type $(2A,4,7)$ in $G$ are
\begin{itemize}
\item $\xi(2A,4A,7A)=1/20160$, fully accounted for by the $L_3(2)$ with normalizer
$(L_3(2)\times L_3(4)){:}2$.
\item $\xi(2A,4H,7A)=1/480$. Consider the subgroup $2^3{:}L_3(2)$ of
$A_8$ in $A_5\times A_8$. The centralizer lies between $A_5$ and $L_3(4)$, and has order
at least $480$, so is $2^4{:}A_5$. Hence the normalizer is $2^3{:}L_3(2)\times 2^4{:}A_5$,
fully accounting for the structure constant.
\item $\xi(2A,4L,7A)=1/192$. There must be such a group inside the $3B$-centralizer,
hence in $O_8^+(2)$. Moreover, its centralizer is in $L_3(4)$ but contains no
elements of order $5$, so has order at most $192$, and therefore exactly $192$.
But it cannot be $L_3(4)$, since $L_3(4)$ is not a subgroup of $O_8^+(2)$.
\end{itemize}

Next consider the second type of $A_6$.
In this case, the centralizer of the $A_6$ in the Monster
is $2\udot L_3(4){:}2$, and the $A_6$ contains elements of Monster class $2B$.
Hence the centralizer of
$3^2{:}2$ is $2\udot U_4(3){:}2^2$, that is an involution centralizer in
$O_8^+(3)$, and the argument of
\cite[Theorem 11.2]{moreB} then shows that any group we obtain in this way lies in
$S_3\times 2\udot Fi_{22}$. Hence, if it has shape $2^2\udot L_3(4)$ then
it centralizes an element of order $3$. In other words, any $L_3(4)$ of this type
in $G$ lies in $S_3\times U_6(2)$.

To put more detail into the argument, the intersection of two copies of $L_3(4)$ in
$U_4(3)$ is either $S_6$ or $2^4S_4$ (in the case when the two copies are conjugate),
or $L_2(7){:}2$ or $2^4{:}A_5$ (when they are not). Lifting to the double covers
we may lose a $2$ from the top of the group. Now the centralizer of 
 $2^2\udot L_3(4)$ in the Monster cannot contain elements of order $5$, so this eliminates
two of the cases. The $L_2(7){:}2$ case gives the well-known group
$L_3(2)\times 2^2\udot L_3(4)$ which we have already seen.
The final case may or may not be $L_3(4)$, but whatever it is has normalizer
contained in a $2$-local subgroup of $G$.

Finally we consider the case of the third type of $A_6$.
This case was omitted in the proof of Theorem 11.2 in \cite{moreB},
perhaps because it was considered obvious,
but more likely due to oversight. The centralizer of this $A_6$ in the Monster is
$M_{11}$. Then from Norton's Monstralizer list \cite{Anatomy1} we read off that the
centralizer of the relevant $3^2{:}2$ is $3^5{:}M_{11}$. Now the intersection 
of two copies of $M_{11}$ in this $3^5{:}M_{11}$ is either $3^2{:}Q_8$ or $A_5$.
In the former case, the Monstralizer of $3^2{:}Q_8$ is again $3^5{:}M_{11}$,
which does not involve $L_3(4)$. In the latter case, the Monstralizer of
$A_5$ is either $A_{12}$ or $M_{11}$ (neither of which involves $L_3(4)$),
or $2.M_{22}.2$, in which the subgroup $2\udot L_3(4)$ does not centralize
a $2A$-pure $2^2$, so does not lie in $2^2\udot{}^2E_6(2)$.

\subsection{Conclusion}
In this section we have shown that there is a unique class of $M_{22}$
in $G.S_3$, and that the only case in which the normalizer of a group
$L_3(4)$, $U_4(3)$ or $U_3(3)$ is maximal is the case of the $L_3(4)$
with normalizer $(L_3(2)\times L_3(4)){:}2$ in $G$.
This concludes the proof of our main results.

Of the $39$ simple groups we had to classify, 
in the following $33$ cases
a complete list up to conjugacy has been obtained:
\begin{itemize}
\item $A_{12}$, $A_{11}$,  $A_{10}$, $A_9$, $A_8$, $A_7$, $A_5$,
\item $L_2(8)$, $L_2(11)$, $L_2(16)$, $L_2(17)$, $L_2(25)$, $L_2(13)$,   
\item $L_3(3)$, $L_4(3)$, 
$U_3(8)$, $U_5(2)$, $U_6(2)$, 
\item
$O_7(3)$, $O_8^+(2)$, $O_8^-(2)$, $S_4(4)$,
$O_{10}^-(2)$,  $S_8(2)$, $S_6(2)$,
\item   $G_2(3)$, $F_4(2)$, ${}^3D_4(2)$, ${}^2F_4(2)'$,
\item $M_{11}$, $M_{12}$, $M_{22}$, $Fi_{22}$.
\end{itemize}
The remaining $6$ have been dealt with to the extent that their normalizers are 
shown to be non-maximal, although a complete list of conjugacy classes
and normalizers has not (yet) been obtained:
\begin{itemize}
\item $A_6$, $L_2(7)$, $U_3(3)$, $U_4(2)$, $U_4(3)$, $L_3(4)$.
\end{itemize}

\section{Further remarks}
\label{further}
In this section we provide an alternative proof of
the following theorem, using neither computation nor Norton's results:
\begin{theorem}
\begin{enumerate}
\item There is a single conjugacy class of each of the groups $O_{10}^-(2)$,
$A_{12}$ and $A_{11}$ in $G$, and the normalizers in $G.S_3$ are
$$(3\times A_{11}){:}2 < (3\times A_{12}){:}2 < (3\times O_{10}^-(2)){:}2.$$
\item There is a single class of $F_4(2)$ in $G.S_3$, splitting into three classes in $G$.
The normalizer in $G.S_3$ is $2\times F_4(2)$.
\item There are two classes of each of the groups
$S_8(2)$ and ${}^3D_4(2)$ in $G.S_3$, splitting into four each classes in $G$.
The normalizers in $G.S_3$ are
\begin{itemize}
\item $S_8(2) \times S_3 < (O_{10}^-(2)\times 3){:}2$;
\item $S_8(2) \times 2 < F_4(2)\times 2$;
\item ${}^3D_4(2){:}3\times S_3$;
\item ${}^3D_4(2){:}3 \times 2< F_4(2)\times 2$.
\end{itemize}
\end{enumerate}
\end{theorem}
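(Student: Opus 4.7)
The plan is to derive each part from the centralizer descriptions in Section~\ref{cenout} combined with an order-theoretic inspection (Lagrange's theorem) applied to the maximal subgroups of $G$ established in Sections~\ref{exist}--\ref{3local}, avoiding both the structure-constant computation and Norton's Monster analysis used elsewhere in the paper.

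For Part~(1), the identification $C_G(3D)\cong O_{10}^-(2)$ gives a single $G$-class of $O_{10}^-(2)$: distinct $G$-conjugates of $\langle 3D\rangle$ form a single $G$-orbit because no inner element of $G$ inverts the outer element $3D$, so $N_G(\langle 3D\rangle)=C_G(3D)=O_{10}^-(2)$. For $A_{11}$ and $A_{12}$, both have order divisible by $5^2\cdot 11$, and inspection of the maximal subgroups shows $O_{10}^-(2)$ is the unique one with this property: $U_6(2)$, $2^{1+20}{:}U_6(2)$, the Mathieu groups and $L_2(11)$ have only $5^1$, while $F_4(2)$, $(3\times O_8^+(2){:}3){:}2$, ${}^3D_4(2){:}3$, $O_7(3)$, $U_3(8){:}3$ have order not divisible by $11$. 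The remaining candidate $Fi_{22}$ does have $5^2\cdot 11$ in its order, but a case check of its maximal subgroups shows none is divisible by both $5^2$ and $11$ (the $11$-divisible ones, $2\udot U_6(2)$, $M_{22}$ and $M_{12}$, have only $5^1$), so $A_{11}$ and $A_{12}$ do not embed in $Fi_{22}$. Hence both lie in some $O_{10}^-(2)$, where uniqueness and the normalizers follow from the standard action of $S_{12}\le O_{10}^-(2)$ on the $10$-dimensional natural $\FF_2$-module; the extra centralizing $C_3$ in $G.S_3$ is $\langle 3D\rangle$.

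For Part~(2), $C_G(2D)\cong F_4(2)$ gives a unique $G.2$-class of $F_4(2)$ arising as a $2D$-centralizer. The key claim is that every $F_4(2)\le G$ is centralized by an outer involution, equivalently that $N_{G.2}(F_4(2))$ strictly contains $F_4(2)$; this can be shown by restricting the $78$-dimensional adjoint module of $G$ to $F_4(2)$, which decomposes as $52+26$, and observing that the $26$-dimensional summand carries an $F_4(2)$-invariant bilinear form induced from $G$, giving the centralizing involution. Since the diagonal outer automorphism of order~$3$ cannot centralize $F_4(2)$ (else $F_4(2)$ would lie inside $C_G(3{*})$ for some outer class $3D,3E,3F,3G$, all ruled out by order), it must permute three $G$-classes of $F_4(2)$ cyclically, giving a single $G.S_3$-class with normalizer $F_4(2)\times 2$. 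For Part~(3) we proceed analogously: $C_G(3E)\cong {}^3D_4(2){:}3$ gives a unique $G$-class of ${}^3D_4(2)$ of that type, while the Steinberg twisted subsystem ${}^3D_4(2)\le F_4(2)$ and the Dynkin subsystem $B_4(2)\le F_4(2)$ provide the second $G.S_3$-classes of ${}^3D_4(2)$ and $S_8(2)$, splitting into three $G$-classes each under the diagonal automorphism. The natural embedding $O_9(2)\le O_{10}^-(2)$ gives the first $G.S_3$-class of $S_8(2)$. To exhaust all $G$-classes, apply Lagrange: $17\mid|S_8(2)|$ and $13\mid|{}^3D_4(2)|$, and the only maximal subgroups of $G$ meeting these divisibility and associated $7$-power constraints are $O_{10}^-(2)$ and $F_4(2)$. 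The two $G.S_3$-classes in each case are distinguished by the $G$-class of their $7$-elements ($7A$ versus $7B$), so are not fused.

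The main obstacle is justifying in Part~(2) (and by extension Part~(3)) that every $F_4(2)\le G$ admits a centralizing outer involution. Since $F_4(2)$ is itself maximal in $G$, a pure Lagrange argument does not suffice; the cleanest route is the module-theoretic analysis of the $26$-dimensional summand sketched above, which replaces the character-table structure-constant computations of the main text by an explicit restriction of the adjoint representation. A more elementary alternative would be to study the $F_4(2)$-fusion of long-root and short-root involutions into the $G$-classes $2A$ and $2B$, and use the detailed $2$-local structure from Section~\ref{pne3} to force the existence of the centralizing outer involution.
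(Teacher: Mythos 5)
Your proposal has two genuine gaps, and they affect every part of the statement. First, the reductions for $A_{11}$, $A_{12}$, $S_8(2)$ and ${}^3D_4(2)$ rest on ``inspection of the maximal subgroups of $G$'': you argue that any copy of one of these groups must sit inside a known maximal subgroup whose order passes the Lagrange test. But Sections~\ref{exist}--\ref{3local} only establish the \emph{existence} of the listed subgroups and classify the $p$-local ones; completeness of the list of maximal subgroups is precisely the theorem the whole paper is proving. A priori a new conjugacy class of $A_{12}$, $S_8(2)$ or ${}^3D_4(2)$ could have a maximal normalizer not on the list (indeed ${}^3D_4(2){:}3\times S_3$ \emph{is} maximal in $G.S_3$), so the argument is circular. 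Second, the centralizer identifications $C_G(3D)\cong O_{10}^-(2)$, $C_G(2D)\cong F_4(2)$, $C_G(3E)\cong{}^3D_4(2){:}3$ only classify the copies that \emph{arise} as such centralizers; the substantive step is to show that an arbitrary subgroup isomorphic to $O_{10}^-(2)$ or $F_4(2)$ is centralized by an outer element of the right class. You acknowledge this for $F_4(2)$, but the proposed fix --- an $F_4(2)$-invariant bilinear form on the $26$-dimensional summand of the restricted $78$-dimensional module --- does not produce an element of $G.2$ centralizing the subgroup; there is no route from invariant forms on a module constituent to an outer automorphism of $G$, so this is a non sequitur. For $O_{10}^-(2)$ the corresponding step is not addressed at all: the observation that no inner element inverts a $3D$-element says nothing about copies of $O_{10}^-(2)$ not presented as $3D$-centralizers.

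For comparison, the paper's computer-free proof avoids both problems by a generation argument from local data already classified: any $O_{10}^-(2)$ is generated by two copies of $A_5\times A_8$ meeting in $A_5\times A_5\times 3$ (unique up to conjugacy because $C_G(5A)=5\times A_8$); any $F_4(2)$ by $L_3(2)\times L_3(2)$ together with the extension of its Sylow $7$-normalizer to $7^2{:}(3\times 2A_4)$, giving exactly three classes from the three classes of $L_3(2)$ in $L_3(4)$; any ${}^3D_4(2)$ from $7\times L_3(2)$ (four classes); any $S_8(2)$ from $S_3\times S_6(2)$ (four classes); and $A_{11}$, $A_{12}$ by analogous two-subgroup amalgams inside the $5$-centralizer. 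If you want a proof along your lines, you would need an independent argument that every copy of each group is centralized by the appropriate outer automorphism, or else replace the Lagrange step by a construction from subgroups whose $G$-classes are already known, as the paper does.
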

\subsection{$F_4(2)$}
Any subgroup isomorphic to $F_4(2)$ may be constructed by taking a group
$L_3(2)\times L_3(2)$ and extending the Sylow $7$-normalizer to $7^2{:}(3\times 2A_4)$.
Now $L_3(4)$ contains exactly three conjugacy classes of $L_3(2)$, so
there are just three possibilities for the $L_3(2)\times L_3(2)$. The extension
is to the full Sylow $7$-normalizer in $G$, so is unique.
Hence there are exactly $3$ conjugacy classes of $F_4(2)$ in $G$, fused in $G.3$.
The full normalizer in $G.S_3$ is $F_4(2)\times 2$.

\subsection{$O_{10}^-(2)$}
Any subgroup $O_{10}^-(2)$ can be constructed from two copies of $A_5\times A_8$
intersecting in $A_5\times A_5\times 3$. Since the $5$-centralizer is $5\times A_8$, it is
obvious that the choices, of the first subgroup $A_5\times A_8$,
and the subgroup $A_5\times A_5\times 3$, and finally the
second $A_8\times A_5$ are unique up to relevant conjugacy at each stage.
Hence there is exactly one conjugacy class of $O_{10}^-(2)$ in $G$, whose full
normalizer in $G.S_3$ is $(O_{10}^-(2)\times 3){:}2$.

\subsection{$A_{12}$}
Any subgroup isomorphic to $A_{12}$ may be constructed from two groups
$(A_5\times A_7){:}2$ intersecting in $S_5\times S_5$. Since the $5$-centralizer
$5\times A_8$ lies in the group $(A_5\times A_8){:}2$, there is a unique
class of $(A_5\times A_7){:}2$, and a unique class of $S_5\times S_5$ in it.
Now in the normalizer of the second $A_5$, we need to extend $S_5\times 2$
to $S_7$ inside $S_8$, and there is obviously a unique way to do this.
Hence there is exactly one conjugacy class of $A_{12}$ in $G$, whose full
normalizer in $G.S_3$ is $(A_{12}\times 3){:}2$. This group is never maximal
in any extension of $G$ by outer automorphisms.

\subsection{$A_{11}$}
A similar argument applies with $(A_5\times A_6){:}2$. We restrict to a subgroup
$(A_5\times A_5){:}2$, in which the two factors are conjugate in $G$.
At the final stage, we have to extend $S_5$ to $S_6$ in $A_8$, and again it is
clear that there is only one way to do this.
Hence there is exactly one conjugacy class of $A_{11}$ in $G$, whose full
normalizer in $G.3$ is $(3\times A_{11}){:}2$. This group is never maximal
in any extension of $G$.

\subsection{${}^3D_4(2)$}
Every such group can be made from $7\times L_3(2)$ by extending the 
Sylow $7$-normalizer to $7^2{:}2A_4$. As shown above, there are exactly
four classes of $7\times L_3(2)$ in $G$, one containing a central $7B$ and three
containing a central $7A$. In each case the extension from $7^2{:}3$ to
$7^2{:}2A_4$ is unique within the full Sylow $7$-normalizer $7^2{:}(3\times 2A_4)$.
Hence there are exactly four conjugacy classes of ${}^3D_4(2)$ in $G$, each with normalizer
${}^3D_4(2){:}3$ in $G$. In $G.3$, three classes are fused and the other is centralized.
Thus the normalizers in $G.S_3$ are respectively ${}^3D_4(2){:}3\times 2$ and
${}^3D_4(2)\times S_3$. The latter is maximal in $G.S_3$ (and its intersection with
$G.3$ is maximal therein).

\subsection{$S_8(2)$}
Any group $S_8(2)$ can be constructed by taking a group $S_3\times S_6(2)$,
restricting to $S_3\times S_3\times S_6$, and then extending to $S_6\times S_6$.
Now there are exactly four classes of $S_3\times S_6(2)$ in $G$, in one of which
the $S_3$ contains $3B$-elements, while in the other three the $S_3$ contains
$3A$-elements. Then the restriction from $S_6(2)$ to $S_3\times S_6$ is unique
up to conjugacy, and the centralizer in $G$ of the $S_6$ is exactly $S_6$.
Hence there is at most one copy of $S_8(2)$ containing any given $S_3\times S_6(2)$.
But we already know there are at least four conjugacy classes of $S_8(2)$ in $G$,
one centralized by an outer $S_3$, and three more in $F_4(2)$ centralized by an
outer involution,
so there are exactly four. Three are fused in $G.3$, while the other is centralized by
an element of class $3D$. The normalizers in $G.S_3$ are respectively
$S_8(2)\times 2$, contained in $F_4(2)\times 2$, and $S_8(2)\times S_3$,
contained in $(O_{10}^-(2)\times 3){:}2$.

\section*{Appendix: $(2C,3C,11)$ generators for ${}^2E_6(2)$}
$$\begin{array}{rrrrrrr}
11&6&24&20&19&20&22\cr
11&8&11&17&20&12&12/19\cr
11&8&11&22&35&12&15\cr
11&8&18&28&17&17&11/30\cr
11&8&22&16&13&22&19/28\cr
11&8&24&20&19&12&18/35\cr
11&8&28&17&14&17&19/24\cr
11&8&35&8&17&14&19/35\cr
11&8&35&24&33&28&10/20\cr
11&9&13&17&17&20&19/28\cr
11&9&19&11&33&17&9/22\cr
11&9&19&35&13&19&20/22\cr
11&9&21&17&17&19&20/20\cr
11&9&22&35&33&21&12/33\cr
11&9&24&12&13&16&35\cr
11&9&24&35&12&17&11\cr
11&9&33&13&19&30&17/28\cr
11&10&19&19&19&33&11/19\cr
11&10&22&18&10&10&24\cr
11&10&22&20&28&16&13/35\cr
11&10&28&17&13&19&13/16\cr
11&10&35&11&28&17&13/20\cr
11&12&11&24&22&24&17/24\cr
11&12&13&24&18&21&22/33\cr
11&12&17&17&14&19&15/16\cr
11&12&18&10&30&28&17/24\cr
11&12&18&19&13&19&22/30\cr
11&12&21&12&10&16&19\cr
11&12&22&21&22&35&13/19\cr
11&12&30&20&11&9&33\cr
11&12&33&13&19&11&16/35\cr
11&12&33&17&22&17&22/35\cr
11&12&35&24&11&17&17\cr
11&13&12&21&33&22&19/35\cr
11&13&16&28&28&17&16/19\cr
11&13&16&30&35&16&21/35\cr
11&13&17&30&17&35&13/20\cr
11&13&17&30&33&18&16/19\cr
11&13&19&17&17&19&19/28\cr
11&13&20&21&20&17&17/24\cr
11&13&21&21&30&18&13/17\cr
11&13&21&24&22&12&19/22\cr
11&13&22&22&22&22&20/22\cr
11&13&24&12&17&13&18/22\cr
11&13&28&21&22&19&17/21\cr
11&13&30&17&22&16&18/33\cr
11&13&30&22&19&24&17/35
\end{array}\qquad
\begin{array}{rrrrrrr}
11&13&33&17&17&17&12/21\cr
11&13&33&22&13&11&19/22\cr
11&14&17&19&28&19&21/33\cr
11&14&17&28&22&17&12/22\cr
11&14&19&20&24&17&17/19\cr
11&14&35&17&19&14&21/22\cr
11&15&9&17&20&19&9/21\cr
11&15&16&30&18&16&13/17\cr
11&15&18&35&13&18&13/13\cr
11&15&19&19&11&19&17/22\cr
11&15&19&30&19&18&17/17\cr
11&15&19&33&11&35&19/22\cr
11&15&20&18&8&16&28\cr
11&15&22&18&33&12&13\cr
11&15&24&22&35&11&20/22\cr
11&15&28&30&33&16&18/22\cr
11&15&30&22&30&22&13/16\cr
11&16&13&17&24&17&12/30\cr
11&16&19&11&17&18&13/20\cr
11&16&22&14&22&15&21/35\cr
11&16&22&17&17&17&21/35\cr
11&16&22&17&22&17&20/21\cr
11&16&30&28&35&19&22/30\cr
11&16&33&11&17&22&22/33\cr
11&17&12&17&22&19&17/21\cr
11&17&12&17&24&18&12/17\cr
11&17&13&33&21&33&14/22\cr
11&17&16&6&13&16&13\cr
11&17&16&12&35&18&12\cr
11&17&16&13&20&30&16/22\cr
11&17&19&35&14&19&17/30\cr
11&17&24&19&28&18&12/35\cr
11&17&28&24&30&19&20\cr
11&17&33&21&16&33&16/21\cr
11&17&33&24&28&12&22/22\cr
11&18&9&14&20&30&33\cr
11&18&11&13&28&18&13/21\cr
11&18&16&20&30&35&16/18\cr
11&18&16&35&13&22&18/30\cr
11&18&16&35&24&17&21/35\cr
11&18&17&8&16&33&17\cr
11&18&17&17&18&22&18/19\cr
11&18&17&21&16&12&19/21\cr
11&18&17&33&8&18&22/28\cr
11&18&17&35&18&28&11/17\cr
11&18&18&13&17&21&18/35\cr
11&18&19&11&30&11&21/30
\end{array}$$
$$\begin{array}{rrrrrrr}
11&18&19&15&11&19&13/30\cr
11&18&19&18&19&17&19/30\cr
11&18&20&19&17&22&20/22\cr
11&18&20&28&15&17&17/18\cr
11&18&21&16&33&12&13/24\cr
11&18&21&28&16&14&21/22\cr
11&18&22&19&19&20&17/33\cr
11&18&24&22&21&13&17/30\cr
11&18&24&30&19&28&18\cr
11&18&28&18&12&24&33/35\cr
11&18&28&28&33&22&16/18\cr
11&18&30&12&17&17&12/17\cr
11&18&30&19&17&19&19/28\cr
11&18&30&35&17&19&12/33\cr
11&18&33&18&20&24&17/18\cr
11&18&35&11&19&18&14/17\cr
11&18&35&22&17&22&30/33\cr
11&20&10&11&21&19&16/19\cr
11&20&11&4&14&33&12/17\cr
11&20&11&16&17&13&13/13\cr
11&20&12&13&33&16&9/19\cr
11&20&13&17&18&20&10/19\cr
11&20&13&22&22&11&14/17\cr
11&20&13&35&33&21&17/18\cr
11&20&16&13&17&13&18/24\cr
11&20&16&17&30&18&17/22\cr
11&20&17&19&13&22&17/21\cr
11&20&17&19&22&17&8/17\cr
11&20&17&22&15&18&13/19\cr
11&20&17&24&28&33&22/24\cr
11&20&17&30&24&28&8/24\cr
11&20&17&35&19&22&35/35\cr
11&20&19&11&16&9&12/33\cr
11&20&19&33&30&24&19/24\cr
11&20&22&17&13&22&22/33\cr
11&20&22&19&15&17&21\cr
11&20&22&19&33&15&11/18\cr
11&20&22&30&10&30&20/33\cr
11&20&28&19&14&20&13/17\cr
11&20&30&24&13&19&9/18\cr
11&20&33&16&30&22&22/28\cr
11&21&9&16&16&17&35\cr
11&21&12&17&12&13&18/28\cr
11&21&13&22&19&30&11/17\cr
11&21&16&16&10&33&22/24\cr
11&21&16&20&24&17&20/30\cr
11&21&18&17&30&20&14/35\cr
11&21&18&33&17&28&20/30\cr
11&21&18&35&18&13&18/19
\end{array}\qquad
\begin{array}{rrrrrrr}
11&21&19&11&16&19&17/30\cr
11&21&19&21&30&33&21/21\cr
11&21&19&33&16&35&16/19\cr
11&21&20&20&24&13&19/35\cr
11&21&20&33&11&22&17/19\cr
11&21&21&12&12&15&19/30\cr
11&21&21&17&19&13&16/24\cr
11&21&21&28&12&22&21/28\cr
11&21&22&15&13&33&12\cr
11&21&22&17&13&11&15/17\cr
11&21&22&17&24&20&17/28\cr
11&21&24&17&21&24&20/33\cr
11&21&30&13&16&12&19/35\cr
11&21&33&19&28&17&17/18\cr
11&21&33&22&30&30&13/30\cr
11&21&35&13&30&19&17/22\cr
11&21&35&35&11&16&30/35\cr
11&21&35&35&30&30&13/33\cr
11&24&12&28&35&18&16/20\cr
11&24&13&16&24&19&16/19\cr
11&24&13&33&18&20&19/33\cr
11&24&16&16&18&22&24/28\cr
11&24&17&13&13&19&19/21\cr
11&24&17&14&18&13&22/28\cr
11&24&17&22&13&24&17/28\cr
11&24&18&30&13&28&16/17\cr
11&24&19&11&18&17&19/19\cr
11&24&19&16&16&20&14/18\cr
11&24&19&19&16&18&19/33\cr
11&24&19&22&28&24&16/21\cr
11&24&19&35&13&30&11/17\cr
11&24&20&13&18&13&22/33\cr
11&24&20&18&35&33&16/28\cr
11&24&21&9&24&22&11/33\cr
11&24&21&12&12&19&20/33\cr
11&24&21&17&30&14&13/19\cr
11&24&21&18&33&35&18/24\cr
11&24&21&19&21&24&19/20\cr
11&24&21&21&18&35&22/33\cr
11&24&22&13&33&30&15/30\cr
11&24&22&19&21&28&13/17\cr
11&24&24&16&17&16&14/21\cr
11&24&24&24&17&12&18/28\cr
11&24&28&13&19&19&17/18\cr
11&24&28&17&35&19&18/20\cr
11&24&28&28&28&13&20/22\cr
11&24&30&16&18&13&11/24\cr
11&24&30&16&22&17&19/20\cr
11&24&33&11&28&16&19/21
\end{array}$$
$$\begin{array}{rrrrrrr}
11&24&35&8&16&35&17/33\cr
11&24&35&11&18&13&19/35\cr
11&24&35&11&35&30&19/21\cr
11&24&35&13&18&21&21/28\cr
11&24&35&22&13&24&12/17\cr
11&28&13&19&16&21&16/21\cr
11&28&13&19&24&18&19/20\cr
11&28&16&12&30&17&30\cr
11&28&16&19&21&19&33/33\cr
11&28&17&14&17&18&16/24\cr
11&28&17&22&13&28&21/22\cr
11&28&17&22&20&14&22/28\cr
11&28&18&19&17&12&16/22\cr
11&28&18&20&17&16&16/21\cr
11&28&18&21&12&33&19/22\cr
11&28&19&35&35&30&11/17\cr
11&28&20&20&35&33&16/33\cr
11&28&20&28&24&17&19/21\cr
11&28&20&28&35&15&15/18\cr
11&28&21&19&18&18&19/19\cr
11&28&21&28&24&35&16/24\cr
11&28&22&13&17&28&13/17\cr
11&28&22&19&28&14&12/18
\end{array}\qquad
\begin{array}{rrrrrrr}
11&28&22&19&33&22&17/22\cr
11&28&22&21&30&14&18/30\cr
11&28&24&12&13&16&8/16\cr
11&28&24&13&35&35&21/35\cr
11&28&24&19&22&17&33/33\cr
11&28&24&30&22&28&11/13\cr
11&28&30&28&12&24&12/18\cr
11&28&33&24&22&13&13/17\cr
11&28&35&22&21&17&17/19\cr
11&30&9&19&24&22&17/24\cr
11&30&9&21&30&28&9/18\cr
11&30&13&22&22&19&17/28\cr
11&30&16&13&16&19&16/21\cr
11&30&17&14&22&21&21/21\cr
11&30&17&30&11&22&13/24\cr
11&30&18&13&9&28&19/22\cr
11&30&18&35&35&22&18/20\cr
11&30&22&13&18&14&22/24\cr
11&30&30&8&13&24&19/21\cr
11&30&30&19&14&20&16/17\cr
11&30&35&8&16&16&28\cr
11&30&35&11&18&17&17/35\cr
11&30&35&28&18&21&19/22
\end{array}$$

\end{document}